\documentclass[10pt]{amsart}

\usepackage{graphicx}
\usepackage{amsfonts, amssymb,amsmath, amsthm}
\usepackage{tikz}
\usepackage[pdftex, plainpages=false]{hyperref}

\usepackage{mathtools}
\DeclarePairedDelimiter{\ceil}{\lceil}{\rceil}
 
\theoremstyle{plain}
\newtheorem{thmi}{Theorem}
\newtheorem{defi}[thmi]{Definition}
\newtheorem{cori}[thmi]{Corollary}

\newtheorem{qui}[thmi]{Question}

\newtheorem*{conji}{Conjecture}

\newtheorem{theorem}{Theorem}[section]
\newtheorem{proposition}[theorem]{Proposition}
 \newtheorem{definition}[theorem]{Definition}

\newtheorem{example}[theorem]{Example}
\newtheorem{notation}[theorem]{Notation}
\newtheorem{lemma}[theorem]{Lemma}
\newtheorem{question}[theorem]{Question}
\newtheorem{corollary}[theorem]{Corollary}
\newtheorem{remark}[theorem]{Remark}

\newcommand{\growthp}[1]{\beta'(#1)}
\newcommand{\growthpp}[1]{\beta''(#1)}
\newcommand{\fffun}[1]{f_{#1}}

\newcommand{\Z}{\mathbb{Z}}   
\newcommand{\N}{\mathbb{N}}

\DeclareMathOperator{\dist}{\mathsf{dist}}

\begin{document}

\title[]{The Coarse Geometry of Hartnell's Firefighter Problem on Infinite Graphs}
\author[D.~Dyer]{Danny Dyer}
\author[E.~Mart\'inez-Pedroza]{Eduardo Mart\'inez-Pedroza}
\author[B.~Thorne]{Brandon Thorne}
  \address{Memorial University\\ St. John's, Newfoundland, Canada A1C 5S7}
  \email{dyer@mun.ca, emartinezped@mun.ca}
\subjclass[2000]{05C57, 05C10,  20F65}
\keywords{games on graphs, quasi-isometry, containment, growth, firefighter game, geometric group theory}

\begin{abstract}
In this article, we study Hartnell's Firefighter Problem through the group theoretic notions of growth and quasi-isometry.  A graph has the $n$-containment property if for every finite initial fire, there is a  strategy to contain the fire by protecting $n$ vertices at each turn. A graph has the constant containment property if there is an integer $n$ such that it has the $n$-containment property. Our first result is that any locally finite connected graph with quadratic growth has the constant containment property; the converse does not hold.  A second result is that in the class of graphs with bounded degree, having the constant containment property is closed under quasi-isometry.
We prove analogous results for the $\{f_n\}$-containment property, where $f_n$ is an integer sequence corresponding to the number of vertices protected at time $n$. In particular, we positively answer a conjecture by Develin and Hartke  by proving that the $d$-dimensional square grid $\mathbb{L}^d$ does not satisfy the $cn^{d-3}$-containment property for any constant $c$.  
\end{abstract}

\maketitle

\section*{Introduction}

The firefighter problem on graphs was introduced by Bert Hartnell in 1995 and it has been studied in the last two decades~\cite{FHS00, FM09}. Briefly, the game can be described as follows, and we refer the reader to Section~\ref{sec:preliminaries} for precise definitions. Let $G$ be a graph and let $f$ be a positive integer; an initial fire starts at a finite set of vertices; at each time interval $n\geq 1$, $f$ vertices which are not on fire become protected, and then the fire spreads to all unprotected neighbors of vertices on fire; once a vertex is protected or is on fire, it remains so for all time intervals. The graph $G$ has the \emph{$f$-containment property} if every initial fire admits a strategy to protect $f$ vertices at each  time interval so that the set of vertices on fire is eventually constant. We say that the graph $G$ has the \emph{constant containment property} if it has {the $f$-containment property} for some positive integer $f$.

The constant containment property is well-understood in certain grids of the Euclidean plane. For example,   the infinite ($2$-dimensional) square grid has the $2$-containment property~\cite{Fo03}, the $2$-dimensional infinite triangular grid has the $3$-containment property~\cite{Fo03, Me07},  the $2$-dimensional hexagonal grid has the $2$-containment property and  the ``strong'' grid has the $4$-containment property~\cite{Me04}, see Figure~\ref{fig:01}. Recent work includes other metrics on the game such as the surviving rate, the portion of the grid saved by fewer than the optimal number of firefighters~\cite{GKP14}.

\begin{figure}[t]
\center
\includegraphics[width=\linewidth]{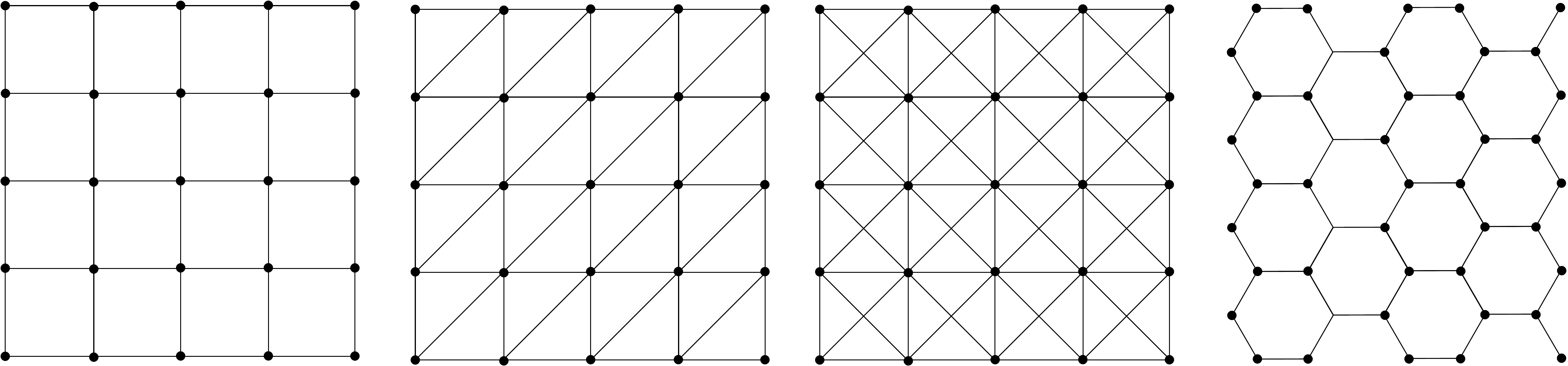}
\caption{The $2$-dimensional square grid, triangular grid, strong grid, and hexagonal grid.}
\label{fig:01}
\end{figure}

The $2$-dimensional grids mentioned above have quadratic growth in the following sense. Let $G$ be a connected graph and let $g_0$ be a chosen vertex; the \emph{growth function of $G$ based at $g_0$} is the function $\beta \colon \N \to \N$ such that $\beta(n)$ is the number of vertices of $G$ which are at distance at most $n$ from $g_0$.  We say that $G$ has \emph{polynomial growth of degree $d$} if there is $C>0$ such that $\beta (n) \leq Cn^d$. In particular, we say the growth function of $G$ is \emph{quadratic} if it has polynomial growth of degree $2$.   One can  verify that having polynomial growth of degree $d$ is independent of chosen vertex for connected graphs.  Growth functions of graphs have been studied in relation to discrete groups, for a brief overview and references we refer the reader to~\cite{BrHa99}. 

\begin{thmi}[Corollary \ref{cor:quadratic}]\label{theorem01}
If $G$ is a connected graph with quadratic  growth, then $G$ satisfies the constant containment  property.
\end{thmi}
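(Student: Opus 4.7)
The plan is a concentric-sphere firewall strategy. Fix a basepoint $g_0$ and write $S_k := \{v \in V(G) : \dist(g_0, v) = k\}$; because graph distances change by at most one along an edge, $S_k$ separates $B(g_0, k-1)$ from $V(G) \setminus B(g_0, k)$. Given a finite initial fire $F$, choose an integer $R \geq 1$ with $F \subseteq B(g_0, R)$. Since the fire's radius from $g_0$ grows by at most one per turn, no vertex of $S_k$ is on fire at the start of turn $k-R$, so any strategy that fully protects $S_k$ within the first $k-R$ protection phases will trap the fire inside $B(g_0, k-1)$, which is finite.

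The key quantitative step is to show that quadratic growth forces some such sphere to be small at a controlled depth, with a firefighter budget $f$ depending only on the growth constant. Let $C>0$ satisfy $\growth{n} \le C n^2$ and take the integer $f = \lceil 8C \rceil + 1$. I claim that for every $R \ge 1$ there exists $k \in \{R+1, \dots, 2R\}$ with $|S_k| \le f(k-R)$. Indeed, were this to fail, summing the reverse inequality would yield
\[
\growth{2R} \;\ge\; \sum_{k=R+1}^{2R} |S_k| \;>\; f \sum_{k=R+1}^{2R} (k-R) \;=\; \frac{f R(R+1)}{2} \;>\; 4C R^2 \;\ge\; \growth{2R},
\]
a contradiction. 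Fixing such a $k$, the strategy is: on each turn $n = 1, 2, \dots, k-R$, protect any $f$ previously unprotected vertices of $S_k$, padding with arbitrary unprotected vertices of $G$ once $S_k$ is exhausted. Because $|S_k| \le f(k-R)$, the sphere is entirely protected by the end of turn $k-R$, before the fire can reach radius $k$; thereafter the burning set is trapped in $B(g_0, k-1)$, giving the $f$-containment property.

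The main obstacle is calibrating the window in which to search for a small sphere: too narrow a window and the pigeonhole gives nothing, too wide and the sphere found lies too far from the fire to be reached within the firefighter budget. The window $(R, 2R]$ is just right, since its length $R$ is proportional to the time budget $k-R$ available to protect a sphere at radius $k$; the quadratic bound $\growth{2R} = O(R^2)$ distributed over $R$ spheres gives average sphere size $O(R)$, matching $f(k-R)$ for $f$ depending only on $C$. The remaining routine checks are the protect-then-spread timing convention, the crucial point being that at the start of turn $k-R$ the fire still lies in $B(g_0, k-1)$ so every vertex of $S_k$ is legally protectable; the trivial case $R = 0$ is absorbed by enlarging $R$ to $1$; and the bound on $f$ is independent of the initial fire, which is exactly what the constant containment property demands.
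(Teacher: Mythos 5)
Your proof is correct, and it follows essentially the same strategy as the paper's proof of Theorem~\ref{thm:polynomialgrowth} (of which Corollary~\ref{cor:quadratic} is the $d=2$ case): both arguments locate, via an averaging argument driven by the growth bound, a sphere $S_k$ that is small enough to be fully protected in $k-R$ rounds of $f$ firefighters each, and then use it as a firewall. Your dyadic window $k\in\{R+1,\dots,2R\}$ packages the pigeonhole more cleanly than the paper's Faulhaber-formula computation (which is designed to treat all $d$ uniformly), at the modest price of a larger constant $f=\lceil 8C\rceil+1$ versus the paper's $2c+1$.
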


Theorem~\ref{theorem01}  is illustrated by the fact that  the four $2$-dimensional grids in Figure~\ref{fig:01} have the constant containment property~\cite{Fo03, Me04, Me07}, see also~\cite{FH13, NR08, MW02}. 

It is known that the underlying graph of any uniform tiling  of the Euclidean plane has quadratic growth, for an account see Section~\ref{sec:tilings}.

\begin{cori}\label{cor:2d}
The underlying graph of any uniform tiling of the Euclidean plane has the constant containment property.
\end{cori}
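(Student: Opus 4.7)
The plan is to reduce Corollary~\ref{cor:2d} to Theorem~\ref{theorem01} by verifying that the underlying graph of every uniform tiling of the Euclidean plane has quadratic growth. Since Theorem~\ref{theorem01} asserts that any connected graph with quadratic growth has the constant containment property, nothing else is needed once the growth estimate is established.

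First I would pin down the data that ``uniform tiling'' provides: an edge-to-edge tiling of $\R^2$ by regular polygons whose symmetry group $\Gamma$ acts transitively on the vertex set. The underlying graph $G$ is then connected, locally finite, and of bounded degree, and $\Gamma$ embeds in $\isom(\R^2)$ as a discrete subgroup acting cocompactly on $\R^2$. A fundamental domain for $\Gamma$ has some finite Euclidean diameter $D$, and the vertices form a uniformly discrete subset of $\R^2$ with bounded density.

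Next I would bound $\beta(n)$ based at a chosen vertex $g_0 \in G$. A \v{S}varc--Milnor style argument shows that the orbit map $\gamma \mapsto \gamma g_0$ realizes a quasi-isometry between $\Gamma$, equipped with any word metric, and $G$ with its graph metric; equivalently, the graph metric on $G$ is bi-Lipschitz equivalent to the restriction of the Euclidean metric to the orbit $\Gamma g_0$. Consequently, the ball of graph-radius $n$ around $g_0$ is contained in a Euclidean disk of radius $An + B$ for constants $A, B$ depending only on the tiling, and uniform discreteness of the vertices forces this disk to contain at most $Cn^2$ vertices of $G$ for all sufficiently large $n$. Hence $\beta(n) \leq C' n^2$, so $G$ has quadratic growth.

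The main obstacle is purely geometric rather than combinatorial: one must verify that the $\Gamma$-action is proper and cocompact and then extract the quadratic upper bound, which uses either the \v{S}varc--Milnor lemma or Bieberbach's theorem identifying discrete cocompact subgroups of $\isom(\R^2)$ as virtually $\Z^2$. As the authors indicate, this discussion is carried out in Section~\ref{sec:tilings}. With quadratic growth established, Theorem~\ref{theorem01} yields the corollary at once.
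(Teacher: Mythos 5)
Your proposal is correct and follows essentially the same route as the paper: reduce via Theorem~\ref{theorem01} (Corollary~\ref{cor:quadratic}) to checking that the underlying graph has quadratic growth, and verify this growth bound using the proper cocompact action of the tiling's symmetry group together with the \v{S}varc--Milnor lemma. The paper phrases the growth estimate as saying the graph is quasi-isometric to a wallpaper group (hence to $\Z\oplus\Z$), which also gives it a second derivation of the corollary directly from the quasi-isometry invariance Theorem~\ref{thm:qii}.
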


The following variation of the constant containment property is implicit in work of Develin and Hartke~\cite{DeHa07}. Let $G$ be a graph and let $\{f_n\}$ be a sequence of integers;  an initial fire starts at a finite set of vertices; at each time interval $n\geq 1$, at most $f_n$ vertices which are not on fire become protected, and then the fire spreads to all unprotected neighbors of vertices on fire; once a vertex is protected or is on fire, it remains so for all time intervals. The graph $G$ has the \emph{$\{f_n\}$-containment property} if every initial fire admits a strategy consisting of  protecting at most $f_n$ vertices at the $n^{th}$ time interval so that the set of vertices on fire is eventually constant. We say that the graph $G$ has the \emph{$O(n^d)$-containment property} if there is a  constant $c\geq 1$ such that $G$ has {the $\{cn^d\}$-containment property}. See Section~\ref{sec:preliminaries} for a more rigorous definition. The following generalization of Theorem~\ref{theorem01} holds.

 \begin{thmi} [Theorem \ref{thm:polynomialgrowth}]\label{thmi:polgrowth}
Let $G$ be a connected graph with polynomial growth of degree at most $d$. Then $G$ satisfies the $O(n^{d-2})$--containment  property.
\end{thmi}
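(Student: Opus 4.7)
The plan is to mimic the firewall construction behind the quadratic-growth case (Theorem~\ref{theorem01} / Corollary~\ref{cor:quadratic}), but with the averaging argument quantified by the degree $d$. The idea is: given any initial fire, pick a sphere around it of controlled size — small enough to be burned through within the allotted per-step budget — and schedule the protection of its vertices before the fire arrives. Since a graph of polynomial growth of degree at most $1$ also has degree at most $2$, I would freely assume $d\ge 2$; the case $d\le 2$ is already covered by Theorem~\ref{theorem01}.

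Fix a basepoint $g_0$ and a constant $C$ with $\beta(n)=|B(g_0,n)|\le Cn^d$. Given an initial fire $F_0$, choose $r_0$ with $F_0\subseteq B(g_0,r_0)$; then the fire at time $n$ lies in $B(g_0,r_0+n)$. For each $R>r_0$, the annulus $B(g_0,2R)\setminus B(g_0,R-1)$ has at most $\beta(2R)\le 2^dCR^d$ vertices and decomposes as the disjoint union of the $R+1$ spheres $S_k=\{v:\dist(g_0,v)=k\}$ with $R\le k\le 2R$. Pigeonhole produces some $k_R\in[R,2R]$ with
\[
|S_{k_R}|\;\le\;\frac{2^dCR^d}{R+1}\;\le\;2^dCR^{d-1}.
\]
Protecting $S_{k_R}$ in its entirety confines the fire to the finite ball $B(g_0,k_R-1)$.

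Under the budget $f_n=cn^{d-2}$, the firefighter accumulates at most $\sum_{n=1}^{N}\lfloor cn^{d-2}\rfloor$ protections in the first $N$ steps, which for $d\ge 2$ is asymptotically $\tfrac{c}{d-1}N^{d-1}$. Set $N:=k_R-r_0-1$, the last turn before the fire can reach $S_{k_R}$, and take $R\ge 2r_0+2$ so that $N\ge R/2$; then the available budget is bounded below by a constant multiple of $cR^{d-1}$. Choosing $c$ once for all initial fires to satisfy, say, $c\ge (d-1)2^{2d-1}C$, the budget exceeds $|S_{k_R}|$ for all $R$ sufficiently large (depending on $r_0$). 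The strategy is then to spend, at each step $n\le N$, the permitted $\lfloor cn^{d-2}\rfloor$ protections on yet-unprotected vertices of $S_{k_R}$; by step $N$ the sphere is fully protected and the fire is trapped.

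The main obstacle is that the budget is a \emph{per-step} cap rather than an aggregate — each individual vertex must be protected before the fire reaches it, so in general one cannot freely reallocate budget across time steps. Here this obstacle collapses nicely: every vertex of $S_{k_R}$ is at distance exactly $k_R$ from $g_0$, and hence shares the common deadline $N=k_R-r_0-1$. So it suffices to verify the single partial-sum inequality $\sum_{n=1}^{N}\lfloor cn^{d-2}\rfloor\ge |S_{k_R}|$, which is precisely what the preceding paragraph establishes. The remaining routine work is turning $\sum_{n=1}^N n^{d-2}\sim N^{d-1}/(d-1)$ into an honest uniform inequality and picking an explicit $R=R(r_0,c,d,C)$ for which everything holds.
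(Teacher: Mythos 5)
Your proof is correct, and it takes a genuinely different (and arguably cleaner) route to the key lemma. Both arguments build a firewall on a sufficiently small sphere $S_r$ and exploit the crucial observation that every vertex of $S_r$ shares a common deadline, collapsing the per-step budget constraint to a single partial-sum inequality $\sum_{k=1}^{r-m} f_k \ge |S_r|$. The difference lies in how the small sphere is produced. The paper argues by contradiction: using Faulhaber's formula for $p_{n,d} = \sum_{k=1}^n k^{d-1}$ it shows that if $s_n \ge dcn^{d-1}$ held for all large $n$, then summing would force $\beta_n$ to exceed $cn^d$; hence $s_n < dcn^{d-1}$ for infinitely many $n$. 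You instead use pigeonhole on the doubling annulus $B(g_0,2R)\setminus B(g_0,R-1)$: it contains $R+1$ spheres but at most $2^dCR^d$ vertices, so some sphere $S_{k_R}$ with $k_R\in[R,2R]$ has $|S_{k_R}|\le 2^dCR^{d-1}$. This is more direct, avoids Bernoulli numbers entirely, and gives slightly stronger information (a small sphere in every dyadic window, rather than just infinitely often), which you then exploit together with the elementary bound $\sum_{n=1}^N n^{d-2} \ge N^{d-1}/(d-1)$. The constants differ — the paper obtains $f_n = (d-1)(dc+1)n^{d-2}$, you obtain $c \ge (d-1)2^{2d-1}C$ roughly — but both are of the same order and neither is claimed to be optimal. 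The remaining bookkeeping you defer (uniformity in $r_0$, the floor corrections, and your conservative $N = k_R - r_0 - 1$ rather than $k_R - r_0$) is indeed routine and matches what the paper handles with its explicit partition $W_1,\dots,W_{r-m}$ of $S_r$.
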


The converse of Theorem~\ref{theorem01} does not hold.  There is a connected graph such that every vertex has degree at most $4$, its growth function is not bounded by a polynomial function, and it satisfies the $1$-containment property, see Example~\ref{ex:subexponential}.  However, we expect the converse of Theorem~\ref{thmi:polgrowth} to hold for certain classes of graphs, see Question~\ref{que:ggt}.  
 
In~\cite{DeHa07}, Develin and Hartke   studied containment properties of the $d$-dimensional square grids $\mathbb L^d$; for a precise definition see Section~\ref{def:Ld}. In particular, they show that  $\mathbb L^d$  does not have the constant containment property if $d>2$, see~\cite[Theorem 8]{DeHa07}. One can verify that the growth function of $\mathbb L^d$ is bounded by a polynomial of degree $d$, see Remark~\ref{rem:snLd}. Hence the following corollary is immediate.

\begin{cori}\label{cori:Ld+}
The $d$-dimensional square grid has the $O(n^{d-2})$-containment property.
\end{cori}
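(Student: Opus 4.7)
The plan is to derive the corollary as a direct combination of Theorem~\ref{thmi:polgrowth} with a growth estimate for the $d$-dimensional square grid $\mathbb{L}^d$. The only substantive content is verifying that $\mathbb{L}^d$ has polynomial growth of degree at most $d$, after which the corollary follows by substituting into the conclusion of Theorem~\ref{thmi:polgrowth}.

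First I would recall (from the forthcoming Remark~\ref{rem:snLd}) that the graph $\mathbb{L}^d$ has vertex set $\mathbb{Z}^d$ with edges between vectors differing by $\pm e_i$ for some standard basis vector $e_i$, so that the graph distance from the origin to $v=(v_1,\dots,v_d)$ is $\sum_{i=1}^{d}|v_i|$. Thus the ball of radius $n$ centered at the origin is contained in the box $\{-n,\dots,n\}^d$, and I would estimate $\beta(n)\leq (2n+1)^d \leq (3n)^d = 3^d n^d$ for $n\geq 1$. This exhibits polynomial growth of degree at most $d$ based at the origin, and by the vertex-transitive (or connectedness) remark earlier in the paper, the property of having polynomial growth of degree at most $d$ is independent of the basepoint.

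With this growth estimate in hand, I would invoke Theorem~\ref{thmi:polgrowth} directly: since $\mathbb{L}^d$ is a connected graph with polynomial growth of degree at most $d$, it has the $O(n^{d-2})$-containment property. This concludes the argument.

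There is really no serious obstacle here; the only point requiring care is that the growth bound must be of the form $\beta(n)\leq Cn^d$ with a single constant $C$, which is why I use $(2n+1)^d\leq (3n)^d$ rather than keeping the lower-order terms. The rest is an invocation of the already-stated theorem.
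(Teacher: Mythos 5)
Your proposal is correct and follows essentially the same route as the paper: establish that $\mathbb{L}^d$ has polynomial growth of degree at most $d$ (the paper does this via Remark~\ref{rem:snLd}, counting $\binom{m+d-1}{d-1}$ vertices at distance $m$ in the orthant, while you use the cruder but equally valid box bound $(2n+1)^d\leq 3^d n^d$) and then invoke Theorem~\ref{thmi:polgrowth}. The difference in growth estimate is cosmetic; both yield the required $\beta(n)\leq Cn^d$.
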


The following  approach to a converse of Theorem~\ref{thmi:polgrowth}   holds under a regularity condition called \emph{homogeneous growth}.  Homogeneous growth is discussed in Section~\ref{sec:homogeneous}.

\begin{thmi}[Theorem~\ref{thm:hgrowth}] \label{thmi:homogeneous}
Let $G$ be a graph which has homogeneous growth with respect to a vertex $g_0$.  Let $s_n$ be the number of vertices at distance exactly $n$ from $g_0$, and let $\{f_i\}$ be a non-decreasing sequence. If $G$ has the $\{f_n\}$-containment property, then the series $ \sum_{n=1}^\infty \frac{f_n}{s_n}$ diverges.
\end{thmi}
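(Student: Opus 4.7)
The plan is to proceed by contradiction. Assume $G$ has the $\{f_n\}$-containment property but $\sum_{n\geq 1} f_n/s_n < \infty$; I aim to show that no strategy with budget $\{f_n\}$ can contain fires rooted near $g_0$.

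\emph{Timing bookkeeping.} A vertex at distance $n$ from $g_0$ cannot catch fire before round $n$, so it may be protected only during rounds $1, \ldots, n$. For any strategy, writing $P$ for the protection set and $p_n = |P \cap S_n|$, this yields the key estimate
\[
\sum_{k=1}^{n} p_k \;\leq\; F_n \;:=\; \sum_{i=1}^{n} f_i \qquad \text{for every } n \geq 1.
\]

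\emph{Geometric step.} I invoke the homogeneous growth hypothesis (Section~\ref{sec:homogeneous}) to obtain an isoperimetric-type lower bound: for any finite vertex cut $P$ separating $g_0$ (or, more generally, an initial ball $B_m$) from the infinite component of $G \setminus P$, the protection distribution $\{p_n\}$ must satisfy a weighted cut inequality, morally of the form $\sum_n p_n / s_n \geq c > 0$, with $c$ depending only on $G$. The intuition is that homogeneous growth forces spheres $S_n$ to be thick enough that a sparsely-distributed cut cannot individually obstruct them, so the ``density'' $p_n/s_n$ of protections must accumulate substantially across spheres.

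\emph{Iteration and summing up.} Since $\{f_n\}$-containment is assumed for \emph{every} initial fire, we may apply the previous bound to each fire $B_m$, obtaining strategies whose protection sets $P^{(m)}$ satisfy both the timing constraint and the weighted cut inequality. Combining the resulting family of inequalities via summation-by-parts, and using the non-decreasing hypothesis on $\{f_n\}$ (in particular $F_n \leq n f_n$) to pass from partial sums $F_n$ to the sequence $f_n$ itself, produces the divergence $\sum_n f_n/s_n = \infty$, contradicting our assumption.

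\emph{Main obstacle.} The crux is the geometric step: extracting from the definition of homogeneous growth a weighted cut inequality strong enough that, after Abel summation against the partial sums $F_n$, it forces divergence rather than merely boundedness of $\sum f_n/s_n$. The monotonicity of $\{f_n\}$ and the freedom to vary the initial fire among $\{B_m\}_{m \geq 1}$ are both used essentially in this step, and pinning down the precise form of the geometric inequality is what the argument hinges on.
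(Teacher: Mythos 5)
Your sketch identifies the right overall skeleton, which does match the paper's proof: fix an initial fire $X_0 = B_G(g_0,m)$, get a budget constraint on how fast protections can accumulate on the spheres, derive a ``weighted cut'' inequality $\sum_k p_k/s_{m+k}\geq 1$ from the homogeneous growth hypothesis, compare the $p$'s to the $f$'s, and then vary $m$ to force divergence. However, the step you flag as the ``main obstacle'' is indeed a genuine gap: you do not prove the weighted cut inequality, and your static framing (an isoperimetric bound on vertex cuts separating $B_m$ from infinity) is not the way the homogeneous growth hypothesis is actually brought to bear. What the paper does is dynamic: letting $T_k = X_k \cap S_{m+k}$ be the fire front on the sphere $S_{m+k}$ at time $k$, homogeneous growth gives the sphere-by-sphere ratio bound $|T_k^*|/|T_k|\geq s_{m+k+1}/s_{m+k}$, and by definition $|T_{k+1}| = |T_k^*| - p_{k+1}$, where $p_{k+1}$ counts the vertices of $T_k^*$ that the strategy has already protected. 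Unwinding this recursion from $k=0$ ($|T_0|=s_m$) to the containment time $\ell$ (where $T_\ell^* \subseteq$ protected vertices, so $p_\ell = |T_{\ell-1}^*|$) and dividing by $s_{m+\ell}$ yields precisely $\sum_{k=1}^{\ell} p_k/s_{m+k}\geq t_m/s_m = 1$. That recurrence is the content you would need to supply, and it would not follow from a static cut bound without essentially the same calculation.

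A secondary issue is your finishing move. The inequality $F_n \leq n f_n$ is not the right lever to pass from $\sum_k p_k/s_{m+k}$ to $\sum_k f_{m+k}/s_{m+k}$: it introduces a spurious factor of $n$ and would lose the divergence conclusion. What is needed is the comparison statement of Lemma~\ref{lem:easyineq}: since $\sum_{k\leq j} p_k \leq \sum_{k\leq j} f_k$ for all $j$ and $\{s_{m+k}\}_k$ is non-decreasing, Abel summation against the non-increasing weights $1/s_{m+k}$ gives $\sum_k p_k/s_{m+k} \leq \sum_k f_k/s_{m+k}$. Only then does the monotonicity of $\{f_n\}$ enter, in the elementary form $f_k \leq f_{m+k}$, to replace $f_k$ by $f_{m+k}$. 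Your ``timing bookkeeping'' justification also slightly misstates the mechanism: a vertex at distance $n$ may in fact be protected at a round later than $n$ (if protections elsewhere have already halted the fire before reaching it), so what one really needs is a careful definition of $p_k$ as the number of fire-front vertices blocked at step $k$, under which the bound $\sum_{k\leq n}p_k \leq F_n$ is correct because these vertices lie on pairwise disjoint spheres and were each protected by time $n$.
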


It can be shown that any orthant of the $d$-dimensional square grid has homogeneous growth, see Proposition~\ref{prop:Ldsatisfies}. 

\begin{cori}[Corollary~\ref{cor:conj}]\label{cor:conji}\label{cori:Ld-}
Let $d$ and $q$ be positive integers.
If $\lim_{n\to \infty} \frac{n^q}{n^{d-2}}=0$, then $\mathbb{L}^d$ does not satisfy the $O(n^q)$-containment property. In particular, $\mathbb{L}^d$ does not satisfy the $O(n^{d-3})$-containment property.
\end{cori}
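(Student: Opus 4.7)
The plan is to invoke Theorem~\ref{thmi:homogeneous} with an orthant of $\mathbb{L}^d$ in the role of the graph $G$; recall Proposition~\ref{prop:Ldsatisfies} guarantees that such an orthant has homogeneous growth. First I would observe that $\lim_{n\to\infty}n^q/n^{d-2}=0$ forces $q<d-2$, i.e.\ $q\le d-3$, since $q$ and $d$ are positive integers; so the statement has content only when $d\ge 4$. For the orthant $\N^d$ with the $\ell^1$ metric, the sphere of radius $n$ about the corner is $\{(x_1,\ldots,x_d)\in\N^d : x_1+\cdots+x_d=n\}$, of size $\binom{n+d-1}{d-1}=\Theta(n^{d-1})$, so $s_n=\Theta(n^{d-1})$.

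Arguing by contradiction, suppose $\mathbb{L}^d$ has the $\{cn^q\}$-containment property for some constant $c\ge 1$. My first step is to transfer this property to the orthant. For a fire at the corner of $\N^d$, view it as a fire at the origin of $\mathbb{L}^d$, apply the given containment strategy, and then symmetrize the strategy under the reflection group of order $2^d$ generated by the coordinate hyperplanes (which fixes the origin). Protecting a larger set of vertices only slows the fire's spread, so the symmetrized strategy still contains the fire; it uses at most $2^d c n^q$ protectors per step. Since both the initial fire and the symmetrized strategy are now invariant under reflections, restricting them to $\N^d$ yields a valid $\{c'n^q\}$-containment strategy in the orthant, for some constant $c'=c'(c,d)$.

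Since $\{c'n^q\}$ is non-decreasing and the orthant has homogeneous growth with $s_n=\Theta(n^{d-1})$, Theorem~\ref{thmi:homogeneous} now forces
\[
\sum_{n=1}^{\infty}\frac{c'n^q}{s_n} \;=\; \Theta\!\left(\sum_{n=1}^{\infty} n^{q-d+1}\right)
\]
to diverge. But this series converges as soon as $q-d+1<-1$, i.e.\ when $q\le d-3$, contradicting our hypothesis. The special case $q=d-3$ recovers the ``In particular'' clause.

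The step I expect to be the main obstacle is the transfer from $\mathbb{L}^d$ to its orthant. A careful argument must check that the symmetrized strategy is legitimate (protecting only unburned vertices at every step, even though the orbits of the reflection group overlap on the coordinate hyperplanes) and that vertices lying on these shared hyperplanes are correctly accounted for when restricting, so that $c'$ is genuinely independent of $n$. A cleaner alternative would be to realize fire strategies in $\N^d$ as reflection-symmetric strategies in $\mathbb{L}^d$ from the outset, bypassing any explicit averaging.
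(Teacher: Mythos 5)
Your core argument is the same as the paper's: apply Theorem~\ref{thmi:homogeneous} to the positive orthant $\mathbb{L}^d_+$, using Proposition~\ref{prop:Ldsatisfies} for homogeneous growth and the computation $s_n=\binom{n+d-1}{d-1}=\Theta(n^{d-1})$, and then observe that $q\le d-3$ makes $\sum f_n/s_n$ converge, contradicting the theorem. That part is correct and identical in spirit to the paper's proof.

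Where you diverge is in the transfer from $\mathbb{L}^d$ to $\mathbb{L}^d_+$, and here your route is considerably heavier than it needs to be. The paper invokes Proposition~\ref{prop:subgraph}: the $\{f_n\}$-containment property is inherited by subgraphs of a locally finite graph. Since $\mathbb{L}^d_+$ is a subgraph of $\mathbb{L}^d$, a $\{cn^q\}$-containment strategy on $\mathbb{L}^d$ restricts (intersect each protection set with the orthant) to a $\{cn^q\}$-containment strategy on $\mathbb{L}^d_+$ with no loss in the constant, and there is nothing more to check. Your symmetrization under the order-$2^d$ reflection group is not wrong, but it is superfluous: you are averaging over group orbits only to then restrict to a fundamental domain, and the restriction step is legitimate precisely because of the subgraph-hereditary property that you could have applied directly. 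Worse, as you yourself flag, the symmetrization introduces genuine subtleties that you leave unresolved --- the reflected protection sets may overlap vertices that are already burning under the modified strategy, requiring an inductive comparison between the original fire and the symmetrized one; and you still need an argument (essentially a folding / reflection of paths) to see that the symmetric strategy restricted to $\N^d$ contains the fire in the orthant. All of that work is avoided by citing Proposition~\ref{prop:subgraph}. In short: correct destination, but you built a detour where the paper had already paved a direct road.
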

 
 Corollary~\ref{cori:Ld-}  generalizes Develin and Hartke's result that $\mathbb L^d$ does not have the constant containment property for $d>2$, it shows that  Corollary~\ref{cori:Ld+} is sharp, and positively answers the following conjecture raised by Develin and Hartke in the polynomial case.

\begin{conji}\cite[Conjecture 9]{DeHa07}
Suppose that $f (t)$ is a function on $\N$ with the property that $f (t)/t^{d-2}$ goes to $0$ as $t$ gets large. Then there exists some outbreak on $\mathbb{L}^d$ which cannot be contained by deploying $f (t)$ firefighters at time $t$.  A weaker conjecture would require $f (t)$ to be a polynomial.
\end{conji}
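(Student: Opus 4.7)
First I would observe that the hypothesis $\lim_{n\to\infty} n^q/n^{d-2} = 0$, together with $q,d$ being positive integers, is equivalent to $q \leq d-3$; in particular $d \geq 4$, and the ``in particular'' clause of the corollary is the special case $q = d-3$. I would then argue by contradiction, assuming that there is a constant $c\geq 1$ for which $\mathbb{L}^d$ satisfies the $\{cn^q\}$-containment property.

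The plan is to apply Theorem~\ref{thmi:homogeneous} to an orthant $O = \{(x_1, \ldots, x_d) \in \Z^d : x_i \geq 0\}$ of $\mathbb{L}^d$, since Proposition~\ref{prop:Ldsatisfies} guarantees that $O$ has homogeneous growth at its corner $g_0$. Before invoking the theorem, I need to transfer the containment property from $\mathbb{L}^d$ down to $O$. Given a finite initial fire $F_0 \subseteq O$ and a containment strategy $\sigma$ on $\mathbb{L}^d$, I would define a strategy $\sigma'$ on $O$ that at each turn $t$ protects exactly those vertices of $O$ that $\sigma$ protects at turn $t$ (discarding everything outside $O$). A short induction on $t$, using the identity $N_O(A) = N_{\mathbb{L}^d}(A) \cap O$ for $A \subseteq O$, shows that the fire under $\sigma'$ is contained at every step in the intersection of the $\mathbb{L}^d$-fire under $\sigma$ with $O$; since the latter is eventually constant and $\sigma'$ uses at most $cn^q$ protections per turn, $O$ inherits the $\{cn^q\}$-containment property.

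Theorem~\ref{thmi:homogeneous} applied to $O$ with the non-decreasing sequence $\{cn^q\}$ then forces $\sum_{n\geq 1} cn^q/s_n$ to diverge, where $s_n = |S_n(g_0)|$. A direct count gives $s_n = \binom{n+d-1}{d-1}$, a polynomial of degree $d-1$ in $n$ with positive leading coefficient, so $s_n \asymp n^{d-1}$ and
\[
\sum_{n\geq 1} \frac{cn^q}{s_n} \;\asymp\; \sum_{n\geq 1} \frac{1}{n^{d-1-q}}.
\]
The hypothesis $q < d-2$ gives $d-1-q > 1$, so the right-hand series converges and contradicts the divergence just obtained; this establishes the corollary, and taking $q = d-3$ gives the ``in particular'' clause.

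The hard part is the orthant reduction. Although the comparison of the two fire processes is conceptually transparent, one must be careful in handling cumulative protection sets and in verifying that the derived strategy $\sigma'$ respects the $\{cn^q\}$ budget; once that is in place, the rest reduces to Theorem~\ref{thmi:homogeneous} and an elementary $p$-series computation.
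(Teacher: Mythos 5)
Your proposal is correct and follows essentially the same route as the paper: reduce to the positive orthant $\mathbb{L}^d_+$, use Proposition~\ref{prop:Ldsatisfies} (homogeneous growth) and Theorem~\ref{thmi:homogeneous} to force divergence of $\sum f_n/s_n$, then contradict this with a $p$-series estimate from $s_n = \binom{n+d-1}{d-1} \asymp n^{d-1}$ and $q \le d-3$. The only difference is that you re-derive the orthant reduction inline, whereas the paper invokes Proposition~\ref{prop:subgraph} (containment properties are subgraph-hereditary) directly; your induction on $t$ with $W_t := U_t \cap O$ and $X_t \subseteq Y_t \cap O$ is a correct re-proof of that lemma, though the precise mechanism there is only a containment $X_t \subseteq Y_t \cap O$ rather than an equality, since the fire in the ambient $\mathbb{L}^d$ can re-enter $O$ through vertices outside $O$.
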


The main result of the paper is that containment properties on graphs are preserved by quasi-isometry, see Theorem~\ref{thm:qii}.  The notion of quasi-isometry is an equivalence relation between metric spaces which  plays a significant role in the study of discrete groups, for an overview see~\cite{BrHa99} and  references therein.  We consider graphs as metric spaces as follows. The notion of path defines a metric on the set of vertices of a graph $G$ by declaring $\dist_G(x, y)$ to be the length of the shortest path from $x$ to $y$. The metric $\dist$ on the set of vertices of $G$ is called \emph{the combinatorial metric on $G$.}

\begin{defi}\cite{BrHa99}\label{def:qi}
Let $(X_1, \dist_1)$ and $(X_2, \dist_2)$ be metric spaces. A (not necessarily continuous) map $f\colon X_1 \to X_2$ is called a \emph{$(\lambda, \epsilon, c)$-quasi-isometry} if $\lambda\geq 1$ and $\epsilon\geq 0$ are real numbers such that for all $x,y\in X_1$
\[ \frac1\lambda \dist_1 (x,y) -\epsilon \leq \dist_2 (f(x), f(y)) \leq \lambda \dist_1 (x,y)+\epsilon,\]
and  $c\geq 0$ is a real number such that  every point of $X_2$ lies in the $c$-neighborhood of the image of $f$. When such  a map exists, $X_1$ and $X_2$ are said to be \emph{$(\lambda,\epsilon, c)$-quasi-isometric}.
\end{defi}

\begin{thmi} [Theorem~\ref{thm:qi}] \label{thm:qii}
Let $G$ and $H$ be quasi-isometric graphs with bounded degree.
If $G$ satisfies the $O(n^d)$-containment property then  $H$ satisfies the $O(n^d)$-containment property.
\end{thmi}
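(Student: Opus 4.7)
The plan is to transport a containment strategy from $G$ to $H$ through the quasi-isometry. Fix a $(\lambda,\epsilon,c)$-quasi-isometry $f\colon G\to H$ together with a quasi-inverse $\bar f\colon H\to G$, and let $D$ be a common upper bound for the vertex degrees of $G$ and $H$. Given a finite initial fire $F_H\subset H$, define the pulled-back fire $F_G=\bar f(F_H)\subset G$, which is finite. By hypothesis there is a constant $c_1$ and a $G$-strategy containing the fire starting at $F_G$ by protecting at each $G$-time $m$ a set $P_G^m$ with $|P_G^m|\le c_1 m^d$. Set $P_G=\bigcup_m P_G^m$; the containment hypothesis makes $P_G$ finite and a separator of $F_G$ from infinity in $G$.

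The first real step is a geometric lemma: there exists a constant $K=K(\lambda,\epsilon,c)$ such that $N_K(f(P_G))$, the closed $K$-neighborhood in $H$, separates $F_H$ from infinity in $H$. The argument is a quasi-isometric path-lifting. Given any $H$-path $h_0,h_1,\dots$ leaving every bounded subset of $H$, the sequence $\bar f(h_0),\bar f(h_1),\dots$ leaves every bounded subset of $G$ (by the lower quasi-isometry inequality) and has consecutive terms at $G$-distance $\le \lambda+\epsilon$. Interpolating by $G$-geodesics produces an actual $G$-path from $F_G$ to infinity, which must contain some vertex $p\in P_G$. This $p$ lies within $G$-distance $\lambda+\epsilon$ of some $\bar f(h_i)$, so combining the upper quasi-isometry inequality with $d_H(f\bar f(h_i),h_i)\le c$ gives $h_i\in N_K(f(p))$ for $K=\lambda(\lambda+\epsilon)+\epsilon+c$.

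Next I would construct the $H$-strategy by replaying the $G$-strategy on a time scale compressed by the factor $\lambda$. Set $n(m)=\max\{1,\lceil m/\lambda\rceil-C\}$ for a constant $C$ chosen below, and at $H$-time $n$ protect
\[
P_H^n \;=\; \bigcup_{m:\, n(m)=n} N_K(f(P_G^m)).
\]
Each fiber $\{m:n(m)=n\}$ is an interval of at most $\lceil\lambda\rceil+1$ consecutive $G$-times with $m=O(\lambda n)$, and bounded degree in $H$ gives $|N_K(f(P_G^m))|\le D^K|P_G^m|\le D^K c_1 m^d$, hence $|P_H^n|=O(n^d)$. For validity one must check that $P_H^n$ is disjoint from the $H$-fire at time $n-1$. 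Here I would first argue that the $G$-strategy may be taken to satisfy $d_G(v,F_G)\ge m$ for every $v\in P_G^m$: a premature protection of $v$ at $G$-time $m<d_G(v,F_G)$ can be postponed to $G$-time $d_G(v,F_G)$ without sacrificing containment, and monotonicity of $m\mapsto c_1m^d$ preserves the per-step budget. Under this reduction, the lower quasi-isometry inequality places $f(P_G^m)$ at $H$-distance $\ge m/\lambda-\epsilon-c$ from $F_H$, and $N_K(f(P_G^m))$ at $H$-distance $\ge m/\lambda-\epsilon-c-K$; choosing $C\ge\epsilon+c+K+1$ then forces $P_H^n\cap B_H(F_H,n-1)=\emptyset$. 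Eventual constancy of the $H$-fire follows from the geometric lemma applied to $\bigcup_n P_H^n\supseteq N_K(f(P_G))$.

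The main obstacle I expect is the bookkeeping in the geometric lemma: one must ensure that the interpolated $G$-path crosses $P_G$ at a vertex (not merely passes near it) and that all the resulting constants are genuinely uniform in the initial fire, so that $K$ depends only on the quasi-isometry data. A secondary subtlety is the reduction to a ``fire-front'' $G$-strategy used in the timing verification; although it is conceptually clear that non-decreasingness of $m\mapsto c_1m^d$ allows postponing protections, this must be executed carefully so that after postponement every $P_G^m$ still satisfies both $|P_G^m|\le c_1m^d$ and $d_G(P_G^m,F_G)\ge m$.
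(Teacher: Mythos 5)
Your overall blueprint is sound and quite close in spirit to the paper's: pull the $H$-fire back to a $G$-fire, run a $G$-strategy, push the protections forward under the quasi-isometry surrounded by a fixed-size $H$-neighborhood, and argue that the pushed-forward protections eventually block the $H$-fire. Your separation lemma is essentially correct: when containment holds, every vertex adjacent to the final $G$-fire set but outside it lies in $\bigcup_m P_G^m$, so the interpolated image of any $H$-path to infinity must cross $P_G$, and the constant $K=\lambda(\lambda+\epsilon)+\epsilon+c$ works (up to harmless adjustments). Where the two proofs genuinely diverge is in how the disjointness of the $H$-protections from the $H$-fire is established, and that is where I see a gap.

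You want to reduce to a $G$-strategy satisfying $\dist_G(v,F_G)\ge m$ for every $v\in P_G^m$, and you propose to do so by postponing any \emph{premature} protection (one with $m<\dist_G(v,F_G)$) to time $\dist_G(v,F_G)$. But premature protections already satisfy the desired inequality $\dist_G(v,F_G)\ge m$, and postponing them to time $\dist_G(v,F_G)$ just turns a strict inequality into equality. The protections you actually need to worry about are the \emph{late} ones, those with $\dist_G(v,F_G)<m$: a vertex $v$ can legitimately be protected at a time $m$ strictly larger than its distance to $F_G$ precisely because earlier blocking has kept the $G$-fire from reaching $v$. Those late protections violate your inequality, and your reduction leaves them untouched. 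Yet those are exactly the vertices for which the lower quasi-isometry bound on $\dist_H(\phi v, F_H)$ can be tiny compared to $m/\lambda$, so $N_K(\phi v)$ can sit inside $B_H(F_H, n(m)-1)$ and cannot be protected by your rule. (There is a secondary issue with the budget claim as well: after postponement, many vertices from distinct earlier times $m_0<m$ with $\dist_G(v,F_G)=m$ can pile onto the single slot $m$, so $|\tilde P_G^m|$ is only bounded by the full prefix sum $\sum_{i\le m} c_1 i^d=O(m^{d+1})$, not by $c_1 m^d$; monotonicity of $m\mapsto c_1 m^d$ does not fix this.)

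The paper deals with the exact issue you are trying to legislate away, and it is the heart of the proof. It defines the $H$-protections $Q_k$ as $\bigl(\bigcup_{g\in W_k} B_H(\phi g,r)\bigr)\setminus Y_{k-1}$, so disjointness from the current $H$-fire is automatic by construction, and then proves by induction (Lemma~\ref{lem:qi}) that $h\in Y_k$ implies $\psi h\in X_{k-1}$: the $H$-fire never escapes the $\psi$-preimage of the (time-shifted) $G$-fire. The inductive step is precisely the propagation argument that replaces your naive distance estimate: if an $H$-path reaching $h$ avoids $Q_1\cup\dots\cup Q_k$, one interpolates to a $G$-path and shows it avoids $W_1\cup\dots\cup W_{k-1}$, using the induction hypothesis to rule out the possibility that the path passes through a protected $G$-vertex that was already ``on fire'' in $H$. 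This is what makes late protections harmless; you cannot substitute a one-line reduction for it. Your proposal would become a correct proof if you replaced the postponement reduction by (i) defining $P_H^n$ to exclude the current $H$-fire and (ii) proving an analogue of that inductive lemma. The paper also uses Proposition~\ref{scalingTurns} to pass to the $(\{a_n\},2c)$-containment game so that one $H$-step corresponds to one $G$-step with spread radius $2c$, which avoids the integer time-rescaling $n(m)$ you introduce; that choice is cosmetic, but it makes the induction cleaner.
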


Given a graph $G$ and an integer $k$, let  $G(k)$ be the graph having the same set of vertices as $G$ and such that two vertices are connected by an edge if they are at distance at most $k$ in $G$.  Observe that $G$ is $(k,0,0)$-quasi-isometric to $G(k)$ for any positive integer $k$. As a consequence, we have the following corollary.

 \begin{cori}\label{cor:gk}
 Let $G$ be a graph with bounded degree. Then $G$ has the $O(n^d)$-containment property if and only if $G(k)$ has the $O(n^d)$-containment property for every $k\geq 1$.
\end{cori}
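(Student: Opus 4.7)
The plan is to deduce the corollary directly from Theorem~\ref{thm:qii}. The backward implication is free: specializing $k=1$ gives $G(1)=G$, so $G$ itself has the $O(n^d)$-containment property whenever all $G(k)$ do. For the forward implication, I would apply Theorem~\ref{thm:qii} to the pair $(G,G(k))$ for each fixed $k\geq 1$, using the $(k,0,0)$-quasi-isometry flagged in the paragraph immediately preceding the corollary: namely, the identity map on vertices, for which every $G$-edge is a $G(k)$-edge (hence $\dist_{G(k)}\leq \dist_G$) and every $G(k)$-edge corresponds to a $G$-path of length at most $k$ (hence $\dist_G\leq k\cdot \dist_{G(k)}$). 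These inequalities give exactly the two bounds in Definition~\ref{def:qi} with $\lambda=k$, $\epsilon=0$, and $c=0$ (the latter because the map is a bijection on the common vertex set).

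The only remaining hypothesis of Theorem~\ref{thm:qii} to check is that $G(k)$ has bounded degree, which is where the assumption on $G$ is essential. If $\Delta$ is the maximum degree in $G$, then the ball of radius $k$ about any vertex of $G$ contains at most $1+\Delta+\Delta(\Delta-1)+\cdots+\Delta(\Delta-1)^{k-1}$ vertices, a bound independent of the basepoint; hence every vertex of $G(k)$ has degree bounded by this same quantity. With both hypotheses verified, Theorem~\ref{thm:qii} applied symmetrically to $G$ and $G(k)$ yields the claimed equivalence. The main obstacle, such as it is, is purely bookkeeping: the substance of the corollary is already packaged in Theorem~\ref{thm:qii}, and the only content beyond it is confirming that passage from $G$ to $G(k)$ preserves the bounded-degree condition.
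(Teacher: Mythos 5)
Your proposal is correct and follows exactly the route the paper intends: it invokes Theorem~\ref{thm:qii} via the $(k,0,0)$-quasi-isometry (identity on vertices) between $G$ and $G(k)$, the backward direction being the trivial case $k=1$. The one detail the paper leaves implicit — that $G(k)$ inherits bounded degree from $G$ — you have filled in correctly with the ball-size bound $1+\Delta+\Delta(\Delta-1)+\cdots+\Delta(\Delta-1)^{k-1}$.
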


Observe that Corollary~\ref{cor:2d} is also a consequence of Theorem~\ref{thm:qii},
since the underlying graphs of a uniform tiling of the Euclidean planes are all quasi-isometric; see Section~\ref{sec:groups} for a brief explanation.  
 
Analogously,  the underlying graph of a uniform tiling of the hyperbolic plane with the combinatorial metric is quasi-isometric to the $2$-dimensional hyperbolic space, see Section~\ref{sec:groups}.  One can prove directly that the underlying graph of the order-$7$ triangular tiling of the hyperbolic plane does not have a polynomial containment property, see Proposition~\ref{prop:expANDhom} and its corollary. Hence Theorem~\ref{thm:qii} implies the following statement.

\begin{cori}\label{cor:hyperbolic}
The underlying graph of any uniform tiling of the hyperbolic plane  does not have a polynomial containment property.
\end{cori}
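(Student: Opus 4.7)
The plan is to reduce the statement to the single case of the order-$7$ triangular tiling and then transport the conclusion to every other uniform tiling by quasi-isometry invariance. This is set up to be a short argument combining Theorem~\ref{thm:qii} with the direct analysis announced in Proposition~\ref{prop:expANDhom}.

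First, I would fix the order-$7$ triangular tiling of $\mathbb H^2$ and let $G_0$ denote its underlying graph equipped with the combinatorial metric. By Proposition~\ref{prop:expANDhom} together with its corollary, $G_0$ fails to satisfy the $O(n^d)$-containment property for every $d\geq 0$; this is the only place where the firefighter game is actually analyzed, and it will already be established by the time the corollary is proved.

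Second, I would let $G$ be the underlying graph of an arbitrary uniform tiling $\mc T$ of $\mathbb H^2$. Since uniform tilings are vertex-transitive, $G$ has bounded degree. The vertex-placement map $G\to \mathbb H^2$ is a quasi-isometry: because tile diameters and edge lengths are uniformly bounded above and away from zero, the combinatorial distance on the vertex set and the ambient hyperbolic distance are bi-Lipschitz equivalent on vertices, and because every point of $\mathbb H^2$ lies in some tile of $\mc T$, the image is coarsely dense. Applying the same reasoning to $G_0$ yields that $G$ and $G_0$ are quasi-isometric to each other.

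Finally, the contrapositive of Theorem~\ref{thm:qii}, applied for each $d\geq 0$ in turn, transfers the failure from $G_0$ to $G$, so $G$ has no polynomial containment property. The main obstacle is the second step: recording cleanly that the combinatorial graph of a uniform tiling is quasi-isometric to $\mathbb H^2$. This is the \v{S}varc--Milnor lemma applied to the symmetry group of $\mc T$ acting cocompactly and properly by isometries on $\mathbb H^2$, and it is the standard geometric input that is already flagged in Section~\ref{sec:groups}; apart from this bookkeeping, the proof reduces to a one-line invocation of the main theorem.
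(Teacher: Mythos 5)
Your proposal is correct and follows exactly the route the paper itself sketches in the introduction: establish the failure of a polynomial containment property for the order-$7$ triangular tiling directly via Proposition~\ref{prop:expANDhom} (recorded as Corollary~\ref{prop:37tilingCP}), use the \v{S}varc--Milnor lemma to see that every uniform tiling of $\mathbb H^2$ has underlying graph quasi-isometric to $\mathbb H^2$ (hence all such graphs are quasi-isometric to one another and of bounded degree by vertex-transitivity), and then apply the contrapositive of Theorem~\ref{thm:qii} for each degree $d$. It is worth noting that in Section~\ref{sec:groups} the paper also records a second, more group-theoretic route to the same conclusion which you did not take: the symmetry group of a uniform tiling of $\mathbb H^2$ is a hyperbolic crystallographic group, which by the Tits alternative contains a rank-$2$ free subgroup; the Cayley graph of a free group is a regular tree, trees fail polynomial containment by Theorem~\ref{thmi:homogeneous}, and containment is subgraph-hereditary by Proposition~\ref{prop:subgraph} (this is the content of Corollary~\ref{cori:tree}). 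That alternative avoids the explicit sphere-growth computation for the order-$7$ tiling at the cost of invoking the Tits alternative; your version is more self-contained since Corollary~\ref{prop:37tilingCP} is proved in the paper by an elementary Fibonacci-style count. Both are valid; yours is the one the introduction advertises.
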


By the infinite $\delta$-regular tree, we mean an infinite tree such that every vertex has degree exactly $\delta$. It is well known that any pair of infinite regular trees of degree at least $3$ are quasi-isometric~\cite[Page 141]{BrHa99}.
Theorems~\ref{thm:qii} and~\ref{thmi:homogeneous} yield the following result which provides a sufficient condition implying that graph does not have a polynomial containment property.

\begin{cori}[Corollary~\ref{cor:tree}] \label{cori:tree}
If a graph $H$  contains a subgraph  quasi-isometric to the infinite $\delta$-regular tree with $\delta\geq 3$, then $H$ does not satisfy a polynomial containment property.
\end{cori}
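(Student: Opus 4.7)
My approach combines three ingredients: (i) the infinite $\delta$-regular tree $T$ admits no polynomial containment property, established via Theorem~\ref{thmi:homogeneous}; (ii) containment properties descend to subgraphs; and (iii) Theorem~\ref{thm:qii} then transfers polynomial containment from the subgraph of $H$ back to $T$, producing a contradiction.

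First I would verify that $T$ has exponential sphere-growth $s_n = \delta(\delta-1)^{n-1}$ for $n\geq 1$ and is homogeneous in the sense of Section~\ref{sec:homogeneous}; vertex-transitivity of $T$ makes the latter automatic. For any $d\geq 0$ and $c>0$, the series $\sum_{n\geq 1} cn^{d}/s_n$ converges since the denominator grows exponentially. By the contrapositive of Theorem~\ref{thmi:homogeneous}, $T$ does not satisfy the $\{cn^{d}\}$-containment property for any choice of $c$ and $d$, hence $T$ has no polynomial containment property.

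Next I would prove the following subgraph lemma: \emph{if $G$ is a subgraph of $H$ and $H$ satisfies the $\{f_n\}$-containment property, then so does $G$.} Given a finite initial fire $S\subseteq V(G)\subseteq V(H)$, take an $H$-strategy $\{\sigma_n\}$ eventually containing the $H$-fire and use $\sigma_n' := \sigma_n \cap V(G)$ as the $G$-strategy. Clearly $|\sigma_n'|\leq f_n$. Because $N_G(v)\subseteq N_H(v)$ for every $v\in V(G)$, a direct induction shows that the resulting fires satisfy $F_n^G \subseteq F_n^H$ at every time $n$, so the $G$-fire is eventually constant.

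Finally, assume for contradiction that $H$ has $O(n^{d})$-containment for some $d$, and let $G\subseteq H$ be the subgraph quasi-isometric to $T$. The subgraph lemma gives $G$ the $O(n^{d})$-containment property; Theorem~\ref{thm:qii}, applied to the bounded-degree graphs $G$ (inheriting bounded degree from $H$) and $T$, then transfers this property to $T$, contradicting the first step. The main technical points that require care are the subgraph lemma, where one must verify that the dominance $F_n^G\subseteq F_n^H$ truly survives the interleaved protection-and-spread step, and an implicit bounded-degree hypothesis on $H$ needed to invoke Theorem~\ref{thm:qii}; the rest is a clean diagram-chase through the two main theorems.
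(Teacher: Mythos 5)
Your proof is correct and follows essentially the same three-step chain as the paper's: the tree has homogeneous exponential growth so Theorem~\ref{thmi:homogeneous} rules out polynomial containment for $T$; Theorem~\ref{thm:qii} transfers containment across the quasi-isometry; and subgraph-heredity (the paper's Proposition~\ref{prop:subgraph}, which you reprove) connects $H$ to the subgraph $G$. You run the chain as a proof by contradiction ($H \Rightarrow G \Rightarrow T$ all having polynomial containment), whereas the paper states the contrapositive directly ($T$ lacks it, hence anything quasi-isometric to $T$ lacks it, hence any $H$ containing such a subgraph lacks it); these are logically identical. One small inaccuracy in your write-up: vertex-transitivity of $T$ does not by itself give homogeneous growth. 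What makes $T$ homogeneous is that it is a tree, so distinct vertices in $S_n$ have pairwise disjoint sets of children in $S_{n+1}$, giving $|T^*| = (\delta-1)|T|$ exactly and hence $|T^*|/|T| = s_{n+1}/s_n$; vertex-transitivity alone is neither necessary nor sufficient here. You are right to flag the implicit bounded-degree hypothesis on $H$ needed to invoke Theorem~\ref{thm:qii}; the paper's proof carries the same implicit assumption.
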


We remark that Corollary~\ref{cor:hyperbolic} can be verified via Corollary~\ref{cori:tree} by observing that   the underlying graph of the order-$7$ triangular tiling of the hyperbolic plane contains an infinite $3$-regular tree as a subgraph. In fact, one can prove that the underlying graph of any uniform tiling of the hyperbolic plane contains a subgraph quasi-isometric to an infinite $3$-regular tree.

\subsection*{Connection with geometric group theory.}
Properties of bounded degree graphs which are preserved under quasi-isometry are known as \emph{geometric properties}.  These types of properties define invariants of finitely generated groups, as discussed in Section~\ref{sec:groups}.  This follows from the observation that for any finitely generated group, Cayley graphs  associated to different finite generating sets are quasi-isometric.
A well-known example of a geometric property is having growth of degree $d$, and our main result Theorem~\ref{thm:qii} states that the $O(n^d)$-containment property is also geometric.

A finitely generated group $G$ has \emph{growth of degree $d$}  if there is a finite generating set of $G$ for which the  corresponding Cayley graph has growth of degree $d$. Analogously, \emph{$G$ has the $O(n^d)$-containment property} if there is a finite generating set of $G$ for which the  corresponding Cayley graph has the $O(n^d)$-containment property.

By Theorem~\ref{thmi:polgrowth}, if a finitely generated group has polynomial growth then it has the polynomial containment property.  Since the Cayley graph of a non-cyclic free group is quasi-isometric to a tree,  Corollary~\ref{cori:tree} implies that groups containing non-cyclic free groups do not have a polynomial containment property.  
We expect the following question to have a positive answer. However, in view of Example~\ref{ex:subexponential}, answering the question requires more than coarse geometry techniques.

\begin{qui}\label{que:ggt}
In the class of finitely generated groups, is having quadratic growth equivalent to having constant containment property? More generally,  is having polynomial growth of degree $d$ equivalent to having the $O(n^{d-2})$--containment property?
\end{qui}

A positive answer to this question would characterize virtually nilpotent groups in terms of Hartnell's firefighter games via Gromov's polynomial growth theorem~\cite{Gr81}. Some remarks on this question are discussed in~\cite{MP17}.

\subsection*{Outline.} The rest of the article is divided into five sections. These sections are independent of the introduction and, in particular, statements and definitions are re-introduced.  Section~\ref{sec:preliminaries} introduces language, notation, and some preliminary results. In particular, Proposition~\ref{prop:expANDhom} which provides sufficient conditions for a graph with exponential growth to not satisfy a polynomial containment property.  The proofs of Theorem~\ref{thmi:polgrowth} on graphs with polynomial growth and its corollaries constitute Section~\ref{sec:growth}.  Section~\ref{sec:homogeneous} introduces the notion of homogeneous growth and discusses the proof of Theorem~\ref{thmi:homogeneous} and its corollaries. The main result of the article, Theorem~\ref{thm:qii}, is proved in Section~\ref{sec:qi}.  Section~\ref{sec:groups} contains a brief discussion on considering containment properties as  quasi-isometry invariants of finitely generated groups, and as an application we deduce the results on tilings of the Euclidean and Hyperbolic planes in this context.  

\subsection*{Acknowledgments.} We thank Bojan Mohar for comments on a preliminary version of the article, in particular, for pointing out Corollary~\ref{cor:mohar} and suggesting Question~\ref{que:mohar}.  Both Dyer and Mart\'inez-Pedroza acknowledge funding by the Natural Sciences and Engineering Research Council of Canada, NSERC.

\section{Containment properties on graphs, preliminaries} \label{sec:preliminaries}

Let $G$ be an undirected graph. A \emph{path of length $n$} is a sequence of vertices $v_0,v_1, \ldots , v_n$ such that $v_i, v_{i+1}$ are connected by an edge for each $i<n$.  The notion of path defines a metric on the set of vertices of $G$ by declaring $\dist_G(x, y)$ to be the length of the shortest path from $x$ to $y$. The metric $\dist$ on the set of vertices of $G$ is called \emph{the combinatorial metric on $G$.} In this note, when we consider  a graph as a metric space, we mean its set of vertices with the combinatorial distance. For a subset $X$ of $G$, the \emph{ball of radius $r$ about $X$,} $B_G(r, X)$ is defined as the collection of vertices at distance less than or equal to $r$ from at least one vertex in $X$. Analogously, for a vertex $g_0$, the \emph{sphere of radius $r$ about $g_0$}, $S_G(r, g_0)$ is the set of vertices at distance exactly $r$ from $g_0$; when $G$ and $g_0$ are understood, we use $S_r$ to denote this set, and $s_r$ to denote its cardinality.
A graph has \emph{locally finite} if every vertex has finite degree or equivalently every ball with unit radius centered at a vertex is a finite set.
A graph has \emph{bounded degree} if there is an upper bound on the cardinality of closed balls of unit  radius centered at vertices, or equivalently there is a finite maximum degree.

Given a sequence of integers $\{f_n\}$ and a graph $G$ we consider the following game. Suppose that a fire breaks out at a finite set of vertices $X_0$. At each subsequent time unit $n$ (called a turn), the player chooses  a set $W_n$ of at most $f_n$ distinct  vertices not on fire to become protected; then the fire spreads to all  adjacent vertices  which are on fire and are not yet protected. Once a vertex is on fire or is protected, it stays in such state for all subsequent turns.  If eventually the set of vertices on fire remains constant we say that the fire has been contained.  If every fire can be contained we say the graph $G$ has the $(\{f_n\}, 1)$-containment property.  A slight variation of the game is defined by allowing the fire to spread to all vertices which are connected by paths of length at most $r$ from a vertex on fire.  If every fire can be contained in this more general version, we say that the graph has the $(\{f_n\},r)$-containment property.  The following definition make these properties of graphs precise.

\begin{definition}\label{def:firegame2} Let $G$ be a graph, let $r$ be a positive integer, and let $\{f_n\}$ be a sequence of non-negative integers.  Given a finite subset $X_0$ of vertices of $G$, a sequence $\{W_k\colon k\geq 1\}$ of subsets of vertices of $G$ is a \emph{$(\{f_n\},r)$-containment strategy for $X_0$} if
\begin{enumerate}
\item for every $n\geq 1$, the set $W_n$ has cardinality at most $f_n$,
\item the sets $X_{n}$ and $W_{n+1}$ are disjoint for $n\geq 0$, where  $X_n$ for $n>0$ is defined as the set of vertices which are connected to a vertex in $X_{n-1}$ by a path of length at most $r$ containing no vertices in $W_1\cup \cdots \cup W_n$, and
\item \label{item:time} there is $N>0$ such that $X_n=X_N$ for every $n\geq N$.
\end{enumerate}
In this case, the set $X_0$ is called the \emph{initial fire}, and the sets  $W_n$ and $X_n$ are called the \emph{set of vertices protected at time $n$} and \emph{the set of vertices on fire at time $n$} respectively.  The integer $N$ is called a \emph{sufficient time} to contain the initial fire $X_0$, in general, we will choose a minimal $N$.

If for any finite subset of vertices $X_0$ of $G$ there exists a  $(\{f_n\},r)$-containment strategy then we shall say that \emph{$G$ has the $(\{f_n\},r)$-containment property}.
\end{definition}

\begin{notation}
By the $\{f_n\}$-containment property we mean the $(\{f_n\},1)$-containment property. We say that a graph $G$ satisfies $O(n^d)$-containment property if $G$ has the $\{f_n\}$-containment property for some sequence $\{f_n\}$ which is $O(n^d)$. In the case $d=0$, we say that $G$ has the \emph{constant containment property}. Analogously, we say that $G$ satisfies a \emph{subexponential containment property} if it satisfies the $\{f_n\}$-containment property for a sequence $\{f_n\}$ such that $\lim_{n\to \infty} \frac{\log f_n}{n} =0$ and  $\lim_{n\to \infty} \frac{f_n}{n^d} = \infty$ for every $d\geq 0$.
 \end{notation}

\begin{remark}
Suppose $\{W_k\colon k\geq 1\}$ is a $(\{f_n\},r)$-containment strategy for $X_0$ such that $X_n=X_N$ for every $n\geq N$. Then $X_n \subseteq B_G(X_0, rN)$ for every $n\geq N$.  Moreover, one can assume that $W_k=\emptyset$ for every $k\geq N$.  In particular containment strategies can be assumed to be finite sequences.
\end{remark}

From here on, we only consider locally finite graphs, i.e. graphs such that every vertex has finite degree. In this class of graphs, Definition~\ref{def:firegame2} can be re-stated as indicated in Proposition~\ref{prop:locfin}.

\begin{proposition}\label{prop:locfin}
For the class of locally finite graphs, in Definition~\ref{def:firegame2}, replacing the statement~\eqref{item:time} by
\begin{itemize}
\item[$(3')$] there is $N>0$ such that $X_n \subseteq B_G(X_0, rN)$ for every $n\geq 0$
\end{itemize}
yields an equivalent definition of $(\{f_n\},r)$-containment strategy for $X_0$.
\end{proposition}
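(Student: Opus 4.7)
The plan is to prove the two conditions (3) and $(3')$ equivalent by establishing a basic monotonicity property of the fire sequence and then exploiting local finiteness.

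First I would observe that the sequence $\{X_n\}$ is non-decreasing, i.e.\ $X_{n-1} \subseteq X_n$ for every $n \geq 1$. Unfolding the recursive definition shows that every $v \in X_n$ avoids $W_1 \cup \dots \cup W_n$ (since such a $v$ is the endpoint of a path of length at most $r$ avoiding those protected sets, and in particular $v$ itself is not in them). Combining this for index $n-1$ with condition~(2) of Definition~\ref{def:firegame2}, which says $X_{n-1} \cap W_n = \emptyset$, gives that any $v \in X_{n-1}$ lies outside $W_1 \cup \dots \cup W_n$; taking the trivial path of length $0$ from $v$ to itself then witnesses $v \in X_n$.

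For the direction (3) $\Rightarrow$ $(3')$, I would use the fact that $X_k \subseteq B_G(X_0, rk)$ holds for every $k \geq 0$, which is immediate by induction from the definition of $X_k$ (at each step the fire spreads by paths of length at most $r$). Thus if $X_n = X_N$ for every $n \geq N$, then for $n \geq N$ we have $X_n = X_N \subseteq B_G(X_0, rN)$ and for $n < N$ we have $X_n \subseteq B_G(X_0, rn) \subseteq B_G(X_0, rN)$, establishing $(3')$ with the same $N$.

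For the direction $(3')$ \ensuremath{\Rightarrow} (3), I would invoke local finiteness: since $G$ is locally finite and $X_0$ is finite, the ball $B_G(X_0, rN)$ is a \emph{finite} set of vertices. By monotonicity, $\{X_n\}_{n\geq 0}$ is a non-decreasing sequence of subsets of this finite set, so it must stabilize; that is, there exists $M \geq 0$ such that $X_n = X_M$ for all $n \geq M$, which is precisely (3). No real obstacle arises here; the only point that needs care is the monotonicity step, because without it one could not conclude stabilization from boundedness alone.
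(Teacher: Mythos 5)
Your proof is correct and follows the same route as the paper: forward direction by bounding $X_n$ inside $B_G(X_0,rN)$, and reverse direction by combining local finiteness (making the ball finite) with monotonicity of $\{X_n\}$ to force stabilization. The only difference is that you spell out the monotonicity $X_{n-1}\subseteq X_n$ (correctly, using condition (2) to rule out $v\in W_n$ and the path definition to rule out $v\in W_1\cup\cdots\cup W_{n-1}$), whereas the paper treats it as evident.
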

\begin{proof}
Let $G$ be a locally finite graph, and let $r$ be a positive integer.  Let $X_0$ be a set of vertices, let $\{W_k\colon k\geq 1\}$ be a sequence of vertex sets, and define inductively $X_{n+1}$ to be the set of vertices $v$ such that there is path $\gamma$ of length at most $r$ from a vertex of $X_n$ to $v$ such that $\gamma$ does not contain vertices in $W_1\cup \cdots \cup W_{n+1}$.

Suppose there is $N>0$ such that $X_n=X_N$ for every $n\geq N$. Since $X_N\subseteq B_G(X_0, rN)$ and $X_n \subseteq X_{n+1}$ for every $n$, it is immediate that $X_n \subseteq B_G(X_0, rN)$ for every $n\geq 0$.

Conversely, suppose there is $N>0$ such that $X_n \subseteq B_G(X_0, rN)$ for every $n\geq 0$.
The assumption that $G$ is locally finite implies that  $B_G (X_0, rN)$ is a finite set of vertices.
Since $X_n \subseteq X_{n+1}$ for every $n$, it follows that there is $M>0$ such that $X_n=X_M$ for every $n\geq M$.
\end{proof}

\begin{example}
The locally finite hypothesis in Proposition~\ref{prop:locfin} is necessary. Let $G$ be  the undirected graph with vertex set $V= \Z \cup \{\infty\}$ and edge set $E=\{ (n, n+1) \colon n \in \Z\} \cup \{(n, \infty) \colon n\in \Z\}$.  Consider the initial fire $X_0=\{ 0\}$, let $W_1=\{ \infty \}$ and let $W_n = \emptyset$ for $n>1$. Observe that $X_n=\{0,1,-1, \ldots , n, -n\}$. Hence, $X_n \subsetneq X_{n+1}$ and $X_n \subset B_G(X_0, 2)$  for every $n$. In particular,  $\{W_n\colon n\geq 1\}$ is a not a containment strategy for $X_0$ in the sense of Definition~\ref{def:firegame2}.
\end{example}

\begin{remark}\label{rem:reduction} Let $G$ be  locally finite graph.
If $Y_0\subseteq X_0$ are finite subsets of vertices of $G$ and $\{W_n\}_{n\geq 1}$ is a {$(\{f_n\},r)$-containment strategy for $X_0$}, then $\{W_n\}_{n\geq 1}$ is also a {$(\{f_n\},r)$-containment strategy for $Y_0$}.  In particular,  if for every vertex $g\in G$ and for every integer $n\geq 0$ there is {$(\{f_n\},r)$-containment strategy for the ball $B_G(g, n)$}, then $G$  has the $(\{f_n\},r)$-containment property.
\end{remark}

\begin{proposition}
\label{scalingTurns}
Let $\{f_n\}$ be a non-decreasing sequence of non-negative integers, let $r$ be a positive integer, and let $g_{n+1}=\sum_{i=1}^r f_{rn+i}$. The graph $G$ has the $(\{f_n\},1)$-containment property if and only if $G$ has the $(\{g_n\},r)$-containment property.
\end{proposition}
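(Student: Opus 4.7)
My plan is to prove the two implications separately; the forward direction is a direct bundling, while the backward direction is the main obstacle and relies on the monotonicity of $\{f_n\}$. For the forward direction, given a $(\{f_n\},1)$-containment strategy $\{W_k\}_{k\geq 1}$ for an initial fire $X_0$, I would bundle it into a $(\{g_n\},r)$-strategy by setting $W'_{n+1}=W_{rn+1}\cup\cdots\cup W_{rn+r}$; the size bound $|W'_{n+1}|\leq g_{n+1}$ is immediate. Writing $\{X_k\}$ and $\{X'_n\}$ for the respective fire sequences, I would show by induction on $n$ that $X'_n\subseteq X_{rn}$: in the inductive step, a path of length $\ell\leq r$ from some $u\in X'_n\subseteq X_{rn}$ to a vertex $v\in X'_{n+1}$, avoiding $W'_1\cup\cdots\cup W'_{n+1}=W_1\cup\cdots\cup W_{r(n+1)}$, is traversed one vertex at a time, and a secondary induction places its $i$-th vertex in $X_{rn+i}$. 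Condition~(2) of Definition~\ref{def:firegame2} for $\{W'_m\}$ follows from the stronger claim $W'_{n+1}\cap X_{rn}=\emptyset$, which holds because each piece $W_{rn+i}$ is disjoint from $X_{rn+i-1}\supseteq X_{rn}$. Stabilization of $\{X'_n\}$ follows immediately from Proposition~\ref{prop:locfin}.

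For the backward direction, given a $(\{g_n\},r)$-strategy $\{W'_m\}$ for $X_0$, I would distribute each $W'_m$ across the $r$ consecutive $f$-turns $r(m-1)+1,\ldots,rm$. For $v\in W'_m$, let $t(v)$ denote the first $f$-turn at which $v$ would catch fire with $W'_1\cup\cdots\cup W'_{m-1}$ protected but no element of $W'_m$ protected; by $g$-validity one has $t(v)-r(m-1)\in \{1,\ldots,r\}$. Ordering $W'_m$ by increasing $t(v)$ and greedily filling $W_{r(m-1)+1},W_{r(m-1)+2},\ldots$ up to the non-decreasing capacities $f_{r(m-1)+1}\leq\cdots\leq f_{rm}$ would give a candidate $(\{f_n\},1)$-strategy. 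An induction dual to the forward one would then yield $X_{rm}\subseteq X'_m$ at every batch boundary, and stabilization of $\{X_k\}$ would follow from Proposition~\ref{prop:locfin}.

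The crucial technical step, and the step I expect to require the most care, is the counting inequality that at most $\sum_{i=1}^{j} f_{r(m-1)+i}$ vertices of $W'_m$ satisfy $t(v)\leq r(m-1)+j$, underlying feasibility of the greedy schedule. Here the non-decreasing hypothesis on $\{f_n\}$ is essential: when the raw strategy fails this inequality, I would first normalize it by postponing each protection to the latest $g$-turn at which it is still valid, after which the monotonicity of $\{f_n\}$ provides enough capacity in the later windows. Once the counting inequality holds, the remaining verifications of conditions~(1)--(3) of Definition~\ref{def:firegame2} are routine.
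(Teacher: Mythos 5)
Your forward direction (bundling $W'_{n+1}=W_{rn+1}\cup\cdots\cup W_{rn+r}$) is correct and is essentially the paper's argument for the implication $(\{f_n\},1)\Rightarrow(\{g_n\},r)$. The backward direction, however, has a genuine gap, and it is precisely at the step you single out as delicate. Your plan is to take a $(\{g_n\},r)$-strategy \emph{for $X_0$ itself}, sort $W'_m$ by $t(\cdot)$, greedily fill the $f$-windows, and, when the counting inequality fails, repair matters by postponing $g$-protections as late as possible and invoking monotonicity of $\{f_n\}$. This repair does not work. Take $G=\mathbb{Z}$, $r=2$, $\{f_n\}=(0,1,1,1,\ldots)$ (non-decreasing), so $g_1=1$ and $g_n=2$ for $n\geq 2$, and $X_0=\{0\}$. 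A valid $(\{g_n\},2)$-strategy for $X_0$ is $W'_1=\{-1\}$, $W'_2=\{3,4\}$. Here $-1$ cannot be postponed: if it is unprotected at $g$-turn $1$, it is on fire by $g$-turn $1$, so the normalized strategy still has $-1\in W'_1$. But $t(-1)=1$ while $f_1=0$, so the counting inequality fails at $j=1$ for $m=1$; no distribution of this strategy over the $f$-windows is a valid $(\{f_n\},1)$-strategy for $\{0\}$. (The graph $\mathbb{Z}$ of course \emph{does} have the $(\{f_n\},1)$-property, but via a different choice of protected vertices, not via reshuffling the times of this particular $g$-strategy.) Also note that your claimed bound $t(v)-r(m-1)\in\{1,\ldots,r\}$ is not automatic: the upper bound requires normalization, and the lower bound requires the simulation inclusion $X_{r(m-1)}\subseteq X'_{m-1}$, which itself depends on the schedule you are trying to construct.

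The idea you are missing is the paper's thickening of the initial fire. To prove $(\{g_n\},r)\Rightarrow(\{f_n\},1)$, the paper does not distribute a $g$-strategy for $X_0$; it runs the $g$-game on $Y_0=B_G(X_0,r)$ (legitimate, since we are using the $(\{g_n\},r)$-containment \emph{property}, which applies to every finite initial fire), obtains a $g$-strategy $\{W'_m\}$ for $Y_0$, and then distributes each $W'_m$ over the $f$-turns $r(m-1)+1,\ldots,rm$ by an \emph{arbitrary} partition into pieces of sizes $\leq f_{r(m-1)+1},\ldots,f_{rm}$. The extra ball of radius $r$ supplies exactly the time buffer your scheduling lacks: one shows by induction that the $f$-game fire $X_k$ satisfies $X_{r(m-1)+i}\subseteq Y_{m-1}$ for $0\leq i\leq r$, and since $W'_m$ is disjoint from $Y_{m-1}$, every piece of $W'_m$ is automatically disjoint from the fire at the time it is played. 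No sorting by $t(v)$, no counting inequality, and in fact no use of the non-decreasing hypothesis on $\{f_n\}$ is needed anywhere in the proof of this proposition --- monotonicity is used later only to compare the asymptotic growth classes of $\{f_n\}$ and $\{g_n\}$. In the above $\mathbb{Z}$ example, the thickened game starts from $Y_0=\{-2,\ldots,2\}$, so the $g$-strategy now protects only vertices at distance $\geq 3$ from $0$, all of which are still unburnt when their $f$-window arrives, and the distribution goes through.
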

\begin{proof}
If $G$ has the $(\{g_n\},r)$-containment property, then it has the $(\{f_n\},1)$-containment property. Indeed, let $X_0$ be a finite set of vertices. Suppose that $\{W_n \colon n\geq 1\}$ is a $(\{g_n\},r)$-containment strategy for $Y_0=B_G(X_0, r)$, and let $Y_n$ be the set of vertices on fire at time $n$. Since the cardinality of $W_{n+1}$ is at most $g_{n+1}=\sum_{i=1}^r f_{rn+i}$,  we can choose a partition $W_{n+1}=W_{n,1}\cup \cdots \cup W_{n, r}$ where $W_{n,i}$ has at most $f_{rn+i}$ elements. For $k\geq 0$ and $1\leq i\leq r$, define $U_{rk+i}$ to be the set $W_{k, i}$. We claim that   $\{U_n\colon n\geq 1\}$ is a $(\{f_n\}, 1)$-containment strategy for $X_0$.
First observe that for each $n\geq 0$, the sets $U_{rn+1}, \cdots, U_{rn+r}$ are disjoint from $B_G(X_0, r(n+1))$, and since $X_{nr+i} \subseteq B_G(X_0, nr+i)$ it follows that  $U_{rn+i}$ and $X_{rn+i}$ are disjoint. Observe that $X_{(k+1)r} \subseteq Y_k$ and since $Y_n$ is eventually constant, the sequence $X_n$ is eventually constant.

Conversely, if $G$ has the $(\{f_n\},1)$-containment property, then it has the $(\{g_n\},r)$-containment property. Indeed, if $\{W_n \colon n\geq 1\}$ is a $(\{f_n\},1)$-containment strategy for $X_0$  then $\{U_n\colon n\geq 1\}$ where
$U_n = W_{(n-1)r+1} \cup \cdots \cup W_{nr}$
is a $(\{g_n\}, r)$-containment strategy for $X_0$.
\end{proof}

\begin{corollary}
If $\{f\}$ is a constant sequence then the graph $G$ has the $(\{f\},1)$-containment property if and only if $G$ has the $(\{rf\}, r)$-containment property for every $r\geq 1$.
\end{corollary}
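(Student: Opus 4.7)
The plan is to observe that this corollary is immediate from Proposition~\ref{scalingTurns} applied to the constant sequence. Set $f_n = f$ for all $n$; this sequence is non-decreasing (trivially, as it is constant) and consists of non-negative integers, so the hypotheses of Proposition~\ref{scalingTurns} are satisfied for any fixed $r \geq 1$.

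Next I would compute the associated sequence $\{g_n\}$ from the proposition. By definition,
\[
g_{n+1} = \sum_{i=1}^r f_{rn+i} = \sum_{i=1}^r f = rf,
\]
so $\{g_n\}$ is the constant sequence with value $rf$. Proposition~\ref{scalingTurns} therefore asserts that $G$ has the $(\{f\},1)$-containment property if and only if $G$ has the $(\{rf\},r)$-containment property, for each fixed $r \geq 1$.

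For the forward direction, if $G$ has the $(\{f\},1)$-containment property, then applying the equivalence above for every $r \geq 1$ gives the $(\{rf\},r)$-containment property for every $r \geq 1$. For the converse, the assumption that $G$ has the $(\{rf\},r)$-containment property for every $r \geq 1$ includes the case $r=1$, which is exactly the $(\{f\},1)$-containment property; one could alternatively invoke the equivalence in the other direction for any chosen $r$. Since no step requires independent argument beyond citing the proposition, there is no real obstacle here — the corollary is essentially just the specialization of Proposition~\ref{scalingTurns} to constant sequences, where the transformation $\{f_n\} \mapsto \{g_n\}$ preserves the property of being constant and multiplies the value by $r$.
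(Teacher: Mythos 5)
Your proof is correct and follows exactly the route the paper intends: the corollary is a direct specialization of Proposition~\ref{scalingTurns} with $f_n \equiv f$, for which $g_{n+1} = \sum_{i=1}^r f = rf$, giving the stated equivalence for each $r\geq 1$. The paper offers no written proof because this computation is immediate, and your argument supplies precisely that.
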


\begin{proposition} \label{prop:subgraph}
Let $H$ be a locally finite graph.  If $H$ has the $\{f_n\}$-containment property and $G$ is a subgraph of $H$ then $G$ has the $\{f_n\}$-containment property.
\end{proposition}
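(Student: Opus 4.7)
My plan is to transfer a containment strategy from $H$ down to $G$ by restricting the protected sets to $V(G)$, and then verify that the fire in $G$ is dominated by the fire in $H$ under the original strategy.

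More precisely, given a finite initial fire $X_0 \subseteq V(G) \subseteq V(H)$, I first invoke the $\{f_n\}$-containment property of $H$ to produce a $(\{f_n\},1)$-containment strategy $\{W_n\}_{n\geq 1}$ for $X_0$ viewed as a fire in $H$. Let $X_n$ denote the resulting sets of vertices on fire in $H$. I then define $W'_n := W_n \cap V(G)$, which clearly has cardinality at most $f_n$. Let $Y_n$ denote the sets of vertices on fire in $G$ under the strategy $\{W'_n\}$, with $Y_0 = X_0$. The main claim, proved by straightforward induction on $n$, is that $Y_n \subseteq X_n$ for every $n$. The inductive step uses two observations: (i) every edge of $G$ is an edge of $H$, so any $G$-neighbor of a vertex in $Y_{n-1}$ is also an $H$-neighbor of a vertex in $X_{n-1}$; and (ii) for a vertex $v \in V(G)$, membership in $W_1 \cup \cdots \cup W_n$ is equivalent to membership in $W'_1 \cup \cdots \cup W'_n$.

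To verify the strategy is valid, I note that $W'_{n+1}$ is disjoint from $Y_n$: indeed, $W'_{n+1} \subseteq W_{n+1}$ is disjoint from $X_n$, and $Y_n \subseteq X_n$. It remains to check that $Y_n$ is eventually constant. Since $H$ is locally finite, so is $G$, and by Proposition~\ref{prop:locfin} applied to $H$, there exists $N$ such that $X_n \subseteq B_H(X_0, N)$ for all $n \geq 0$. Since $B_H(X_0, N)$ is finite and $Y_n \subseteq X_n$ is a non-decreasing sequence of subsets of this finite set, $Y_n$ stabilizes; equivalently, $Y_n \subseteq B_G(X_0, N')$ for some $N'$, and Proposition~\ref{prop:locfin} applied to $G$ gives the desired stabilization in the sense of Definition~\ref{def:firegame2}.

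The argument is essentially a monotonicity observation, so I do not expect a serious obstacle. The only delicate point is the bookkeeping that restricting $W_n$ to $V(G)$ does not weaken the strategy on $G$, which relies on the fact that vertices of $H \setminus G$ are simply irrelevant to the dynamics on $G$. The local finiteness hypothesis on $H$ is used exactly once, to invoke the alternative formulation of containment from Proposition~\ref{prop:locfin} and thus import a finite ambient ball controlling both $X_n$ and $Y_n$.
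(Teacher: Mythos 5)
Your proof is correct and follows essentially the same route as the paper's: restrict the ambient containment strategy on $H$ to $V(G)$, observe that the fire in $G$ under the restricted strategy is contained (at every time step) in the fire in $H$ under the original strategy, and then invoke local finiteness of $H$ via Proposition~\ref{prop:locfin} to conclude stabilization. The paper states the containment $X_k^{G} \subseteq X_k^{H}\cap G$ without spelling out the induction, whereas you make the inductive step and the two supporting observations explicit; this is a presentational difference, not a different argument.
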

\begin{proof}
 Let $X_0$ be a finite subset of $G$. Let $\{U_k\colon k\geq 1\}$ be a $(\{f_n\},1)$-containment strategy for the initial fire $X_0$ in the graph $H$. Let $Y_k$ be the set of vertices on fire in $H$ at time $k$. Suppose that $Y_k \subseteq B_H(X_0, N)$ for $k\geq N$.  A $(\{f_n\},1)$-containment strategy $\{W_k\colon k\geq 1\}$ for $X_0$ in $G$ is defined as follows. Let $W_k$ be $U_k\cap G$.  Let $X_k$ be the set of vertices on fire in $G$ at time $k$.  Observe that $X_k \subseteq Y_k \cap G$, but   equality does  not hold in general.   It follows that $X_k \subseteq B_H(X_0, N)\cap G$ for $k\geq N$. Since all vertices of $H$ have finite degree, it follows that $B_H(X_0, N)\cap H$ is a finite set of vertices and hence there is $M\geq N$ such that $X_k \subseteq B_G(X_0, M)$ for $k\geq M$.
\end{proof}

\begin{lemma}\label{lem:easyineq}
Let $\{s_n\}$ be a non-decreasing positive sequence, and let $\{f_n\}$ and $\{p_n\}$ be positive sequences. Suppose $\sum_{i=1}^k p_i \leq \sum_{i=1}^k f_i$ for every $k$. Then $\sum_{i=1}^k \frac{p_i}{s_i} \leq \sum _{i=1}^k \frac{f_i}{s_i}$ for every $k$.
\end{lemma}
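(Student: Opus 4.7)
My plan is to prove this by summation by parts (Abel summation), exploiting the monotonicity of $\{s_n\}$ to turn the hypothesis on partial sums of $p_i - f_i$ into the desired term-by-term comparison after rearrangement.

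Write $P_k = \sum_{i=1}^k p_i$ and $F_k = \sum_{i=1}^k f_i$, with $P_0 = F_0 = 0$. The hypothesis is $P_k \leq F_k$ for all $k$. Applying Abel summation to $\sum_{i=1}^k p_i \cdot (1/s_i)$ gives
\[
\sum_{i=1}^k \frac{p_i}{s_i} \;=\; \frac{P_k}{s_k} \;+\; \sum_{i=1}^{k-1} P_i\left(\frac{1}{s_i} - \frac{1}{s_{i+1}}\right),
\]
and an identical identity holds for $\{f_i\}$ in place of $\{p_i\}$.

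Because $\{s_n\}$ is positive and non-decreasing, each factor $\frac{1}{s_i} - \frac{1}{s_{i+1}}$ is non-negative, and $\frac{1}{s_k}>0$. Since $P_i \leq F_i$ for every $i$, the right-hand side is term-by-term dominated by the analogous expression for $\{f_i\}$, yielding
\[
\sum_{i=1}^k \frac{p_i}{s_i} \;\leq\; \frac{F_k}{s_k} \;+\; \sum_{i=1}^{k-1} F_i\left(\frac{1}{s_i} - \frac{1}{s_{i+1}}\right) \;=\; \sum_{i=1}^k \frac{f_i}{s_i}.
\]

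There is essentially no obstacle here; the only point that requires care is verifying the Abel summation identity (a routine telescoping) and checking that the signs of the rearranged coefficients are correct so that replacing $P_i$ by the larger $F_i$ preserves the inequality. The monotonicity hypothesis on $\{s_n\}$ is used precisely to ensure this sign condition.
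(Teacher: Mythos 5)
Your proof is correct, and it is essentially the same argument as the paper's: the paper's chain of inequalities is exactly Abel summation carried out step by step (each step moves the accumulated excess $P_j - F_{j-1}$ from denominator $s_j$ to $s_{j+1}$, using $P_j \leq F_j$ and $1/s_j \geq 1/s_{j+1}$), whereas you invoke the summation-by-parts identity in closed form. Same idea, cleaner packaging.
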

\begin{proof}
Since $s_n$ is non-decreasing and positive, and $\sum_{i=1}^j p_i \leq \sum_{i=1}^j f_i$ for $1\leq j\leq k$, we have that
\begin{equation}\nonumber
\begin{split}
\frac{p_1}{s_1}+\frac{p_2}{s_2}+ & \frac{p_3}{s_3} + \ldots  +  \frac{p_k}{s_k}  \leq
   \frac{f_1}{s_1}+\frac{p_1+p_2-f_1}{s_2}+\frac{p_3}{s_3} + \ldots + \frac{p_k}{s_k}    \\
\leq & \frac{f_1}{s_1}+\frac{f_2 }{s_2}+\frac{p_1 + p_2 +p_3   -f_1-f_2}{s_3}  + \frac{p_4}{s_4} + \ldots +\frac{p_{k-1}}{s_{k-1}}+ \frac{p_k}{s_k}   \\
  \cdots \\
   \leq  &\frac{f_1}{s_1}+\frac{f_2 }{s_2}+\frac{f_3}{s_3} + \ldots +\frac{f_{k-1}}{s_{k-1}}+ \frac{p_1 + \cdots +p_k  - (f_1+ \cdots +f_{k-1})}{s_k}     \\
   \leq  & \frac{f_1}{s_1}+\frac{f_2 }{s_2}+\frac{f_3}{s_3} + \ldots +\frac{f_{k-1}}{s_{k-1}}+ \frac{f_k}{s_k}.
\end{split}
\end{equation}
\end{proof}

\begin{proposition}\label{prop:expANDhom}
Let $G$ be a locally finite graph. Suppose that there are a vertex $g_0$ and a real number $\lambda>1$ such that for any integer $n\geq 0$ and any subset of vertices $A$ of the sphere $S_G(g_0, n)$,
\[ |A^*| \geq \lambda |A|,\]
where $A^*$ is the set of vertices of the sphere $S_G(g_0, n+1)$ which are adjacent to a vertex in $A$.
Then for any $d\geq 0$, the graph $G$ does not have the $O(n^d)$-containment property.
\end{proposition}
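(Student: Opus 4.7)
The plan is to argue by contradiction: assume $G$ has the $O(n^d)$-containment property, so for some constant $c\geq 1$ every finite initial fire can be contained while protecting at most $f_n = cn^d$ vertices at turn $n$. I will exhibit an initial fire that cannot be contained by choosing a large ball around $g_0$.

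First, I would observe that applying the expansion hypothesis to $A = S_G(g_0,n)$ gives $s_{n+1}\geq \lambda s_n$, so by induction $s_n\geq \lambda^n$; in particular $s_R\to\infty$ with $R$. For a parameter $R$ to be chosen later, take the initial fire $X_0 = B_G(g_0,R)$ (finite by local finiteness), and fix any purported containment strategy $\{W_n\}_{n\geq 1}$ with $|W_n|\leq cn^d$ and resulting fires $\{X_n\}$. A straightforward induction shows $X_n\subseteq B_G(g_0,R+n)$. I then track the ``outer frontier'' $F_n := X_n\cap S_G(g_0,R+n)$, and note $F_0 = S_G(g_0,R)$, so $|F_0|=s_R$.

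The key recurrence is $|F_{n+1}|\geq \lambda|F_n| - P_{n+1}$, where $P_n := \sum_{i=1}^n f_i$. To see this, note that every neighbor in $S_G(g_0,R+n+1)$ of a vertex in $X_n$ must actually be a neighbor of a vertex in $F_n$, since any $u\in X_n$ with $u\sim v$ for $v\in S_G(g_0,R+n+1)$ satisfies $u\in B_G(g_0,R+n)\cap (S_{R+n}\cup S_{R+n+1}\cup S_{R+n+2})=S_{R+n}$, hence $u\in X_n\cap S_{R+n}=F_n$. Thus $F_{n+1} = F_n^\ast\setminus (W_1\cup\cdots\cup W_{n+1})$, and applying the expansion hypothesis to $F_n\subseteq S_G(g_0,R+n)$ gives $|F_n^\ast|\geq \lambda|F_n|$, while $|W_1\cup\cdots\cup W_{n+1}|\leq P_{n+1}$.

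Unrolling the recurrence yields
\[
|F_n|\ \geq\ \lambda^n s_R\ -\ \sum_{k=1}^{n}\lambda^{n-k}P_k\ =\ \lambda^n\!\left(s_R - \sum_{k=1}^n \lambda^{-k}P_k\right).
\]
Since $P_k\leq c k^{d+1}$ and $\lambda>1$, the series $C:=\sum_{k=1}^\infty \lambda^{-k}P_k$ converges to a finite constant depending only on $c,d,\lambda$. Choosing $R$ with $s_R>C$ (possible because $s_R\geq \lambda^R\to\infty$) forces $|F_n|\to\infty$, contradicting the fact that a contained fire satisfies $X_n=X_N$ eventually and in particular $F_n=\emptyset$ for all large $n$.

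The main technical point—and the only real obstacle—is the geometric argument that neighbors of $X_n$ on the sphere $S_{R+n+1}$ come solely from the frontier $F_n$, which is what allows the local expansion hypothesis to translate into the one-step recurrence; once that is secured, the remainder is the standard linear recurrence estimate together with the convergence of $\sum k^{d+1}\lambda^{-k}$.
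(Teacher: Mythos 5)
Your proof is correct, and it follows the same broad contradiction strategy as the paper (track the frontier $F_n = X_n \cap S_{R+n}$, use the expansion hypothesis pointwise on spheres, and choose $R$ with $s_R$ large enough to outrun the protected vertices). The difference is in how you account for protection. The paper defines $p_{k}$ as the number of vertices on the sphere $S_{r+k}$ that actually block the fire at step $k$, uses that $\sum_{k\leq m} p_k \leq \sum_{k\leq m} f_k$, and then invokes the rearrangement Lemma~\ref{lem:easyineq} to deduce $\sum p_k/\lambda^k \leq \sum f_k/\lambda^k$; this gives the sharp threshold $s_r > c\sum k^d/\lambda^k$ and yields, as a byproduct, the stronger remark that convergence of $\sum f_k/\lambda^k$ already rules out $\{f_n\}$-containment. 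You instead bound the total protection at each step crudely by the cumulative sum $P_{n+1}=\sum_{i\leq n+1} f_i$, which over-counts (the same protected vertex is charged at every later step), but this over-counting only replaces $k^d$ by $k^{d+1}$ inside the geometrically-decaying series, so $\sum P_k/\lambda^k$ still converges and the argument closes without needing Lemma~\ref{lem:easyineq}. So: same skeleton, but your amortization is cruder and more self-contained, at the expense of a slightly weaker quantitative version of the statement — which is immaterial for the proposition as stated.

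One small point worth tightening if you write this up: state explicitly that the lower bound $|F_n|\geq \lambda^n\bigl(s_R - \sum_{k=1}^n \lambda^{-k}P_k\bigr)$ is proved by induction and remains valid even if some $F_m$ were empty (the expansion hypothesis applies trivially to $\emptyset$), so there is no circularity in concluding $|F_n|>0$ for all $n$ once $s_R > C$.
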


\begin{proof}
Suppose there exist constants $c>0$ and $d>0$ so that $G$ has the $\{cn^d\}$-containment property.

Let $S_n$ denote the sphere of radius $n$ about $g_0$,  and let $s_n$ denote its cardinality. Observe that $s_n \geq \lambda^n$.  Let $r$ be a positive integer such that
\begin{equation}\label{eq:seriespe} s_r >  c   \sum_{k=1}^\infty   \frac{k^d}{\lambda^k}  .\end{equation}

Let $\{W_k\colon k\geq 1\}$ be a $\{cn^d\}$-containment strategy for the initial fire $X_0=B(g_0, r)$,   let $X_k$ denote the set of vertices on fire at time $k$, and suppose that the fire is contained after time $\ell$, i.e., $X_k=X_{\ell-1}$ for every $k \geq \ell$. In particular, for every $k\geq 0$, we have that
\begin{equation} \label{eq:endoffire} X_k \subseteq B(g_0, r+\ell).\end{equation}

For $k\geq 0$, let
\begin{equation} T_k = S_{r+k}\cap X_k,\end{equation}
 equivalently, $T_k$ is the set of vertices on fire in the sphere $S_{r+k}$ at time $k$. In particular,  $T_{k+1}$ is the set of vertices in $T_{k}^* \subset S_{r+k+1}$ which remain unprotected at time $k$. Since $X_k=X_{\ell-1}$ for every $k \geq \ell$, we have that
 \begin{equation} T_\ell = \emptyset. \end{equation}

Let $p_{k+1} = |T_k^* \setminus  T_{k+1}|$, and observe that $p_k$ is the number of vertices of $S_{r+k}$ that are protected up to time $k$.  The hypothesis on $G$ implies
\begin{equation} |T_k^*| \geq \lambda |T_k|  \end{equation}
for every $k\geq 0$. Therefore
\begin{equation} \label{eq:37ssubfinal}
\begin{split}
 |T_{\ell}| = |T_{\ell-1}^*| - p_{\ell}  & \geq  \lambda|T_{\ell-1}| - p_{\ell} \\
 & \geq  \lambda^2 |T_{\ell-2}| -\lambda p_{\ell-1} -p_{\ell} \\
& \quad \quad \vdots \\
& \geq \lambda^{\ell} |T_{0}| - \sum_{k=1}^{\ell} \lambda^{\ell-k} p_{k}.
\end{split}
  \end{equation}

 A maximum of $\sum_{k=1}^m c k^d$ vertices    can be protected within $m$ turns. Therefore \begin{equation}\sum_{k=1}^m p_k \leq \sum_{k=1}^m c k^d \end{equation} for every $m$.
By Lemma~\ref{lem:easyineq},
\begin{equation} \label{eq:lemma}  \sum_{k=1}^{\ell} \frac{ p_k}{\lambda^k} \leq   c\sum_{k=1}^{\ell}  \frac{ k^d}{\lambda^k}. \end{equation}

Since $|A_0|=s_r$, we have that
\begin{equation} \label{eq:37subfinal}
\begin{split}
\lambda^{\ell} |T_{0}| - \sum_{k=1}^{\ell} \lambda^{\ell-k} p_{k} & =   \lambda^{\ell} s_r \left (1 - \frac1{s_r}\sum_{k=1}^{\ell} \frac{p_k}{\lambda^k} \right)\\
 & \geq \lambda^{\ell} s_r \left( 1 -    \frac{c}{s_r}\sum_{k=1}^{\ell}   \frac{k^d}{\lambda^k}\right)  \\
& >0,
\end{split}
\end{equation}
where
the first inequality follows from~\eqref{eq:lemma},
 and the last inequality follows from~\eqref{eq:seriespe}.

Putting together inequalities~\eqref{eq:37ssubfinal} and~\eqref{eq:37subfinal} yields that
\begin{equation} T_\ell \neq \emptyset. \end{equation}
We have reached a contradiction and therefore the assumption that $G$ has the $\{cn^d\}$-containment property is false.
\end{proof}

\begin{remark}
The following observation was pointed out by a referee of the article. Under the hypotheses of Proposition~\ref{prop:expANDhom}, the stronger statement holds: If $\{f_n\}$ is a sequence such that the series $\sum_{k=1}^\infty   \frac{f_k}{\lambda^k}$ converges, then the graph $G$ does not have the $\{f_n\}$-containment property. Indeed, the same proof works by choosing $r$ such that
\[ s_r >     \sum_{k=1}^\infty   \frac{f_k}{\lambda^k}.
\]
instead of~\eqref{eq:seriespe}.
\end{remark}

The following is an interesting example of a graph that does not satisfy a polynomial containment property.

\begin{corollary}
\label{prop:37tilingCP}
The underlying graph $G$  of the order-$7$ triangular tiling of the hyperbolic plane does not have a polynomial containment property.
\end{corollary}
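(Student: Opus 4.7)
The plan is to deduce the result as a direct application of Proposition~\ref{prop:expANDhom}. Fixing an arbitrary vertex $g_0$ of $G$, the goal is to produce $\lambda>1$ such that for every $n\geq 0$ and every $A\subseteq S_G(g_0,n)$ the neighborhood $A^*$ in $S_G(g_0,n+1)$ satisfies $|A^*|\geq\lambda|A|$. I plan to deduce this by a double-count of the edges between the two spheres from two structural facts about the sphere decomposition of the order-$7$ triangular tiling:
\begin{enumerate}
\item[(i)] every vertex in $S_G(g_0,n+1)$ has at most two neighbors in $S_G(g_0,n)$;
\item[(ii)] every vertex in $S_G(g_0,n)$ has at least three neighbors in $S_G(g_0,n+1)$.
\end{enumerate}
Granted (i) and (ii), the number of edges between $A$ and $A^*$ is at least $3|A|$ and at most $2|A^*|$, so $|A^*|\geq\tfrac{3}{2}|A|$, and Proposition~\ref{prop:expANDhom} with $\lambda=\tfrac{3}{2}$ immediately rules out the $O(n^d)$-containment property for every $d\geq 0$.

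The combinatorial work is in verifying (i) and (ii), which I view as the main obstacle. I would set up an induction on $n$ whose invariant asserts that $S_G(g_0,n)$ induces a simple cycle in $G$ and that each of its vertices is either \emph{exclusive} (joined to exactly one vertex of $S_G(g_0,n-1)$, two in-sphere neighbors, and four children) or \emph{shared} (joined to exactly two adjacent vertices of $S_G(g_0,n-1)$, two in-sphere neighbors, and three children); in both cases the seven edges incident to the vertex are accounted for. The induction step would read off the structure of $S_G(g_0,n+1)$ from the vertex-figures of the two types: each in-sphere edge of $S_G(g_0,n)$ contributes a unique shared child in $S_G(g_0,n+1)$ via the outward triangle on that edge, and each vertex contributes its remaining children as exclusive. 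A hypothetical third parent for a vertex of $S_G(g_0,n+1)$ would force three consecutive ``inward'' triangles at some vertex of $S_G(g_0,n)$, which the exclusive/shared classification excludes since those vertices carry only one or two inward triangles respectively.

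With (i) and (ii) in hand, the expansion inequality $|A^*|\geq\tfrac{3}{2}|A|$ is immediate from the double-count above, the hypothesis of Proposition~\ref{prop:expANDhom} is satisfied with $\lambda=\tfrac{3}{2}>1$, and the proposition implies that $G$ does not satisfy the $O(n^d)$-containment property for any $d\geq 0$. This is exactly the statement that $G$ has no polynomial containment property.
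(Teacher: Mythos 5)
Your proposal is correct and follows essentially the same route as the paper: one proves by induction that each sphere $S_G(g_0,n)$ is a cycle whose vertices have one or two neighbours in $S_G(g_0,n-1)$ (hence four or three in $S_G(g_0,n+1)$), and then invokes Proposition~\ref{prop:expANDhom}. The only difference is numerical: you extract $\lambda=\tfrac{3}{2}$ by a clean edge double-count between consecutive spheres, whereas the paper asserts $\lambda=2$ via the finer observation that a vertex of $S_{n+1}$ can only be shared between cyclically adjacent parents in $S_n$; either constant exceeds $1$ and so suffices for the proposition.
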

\begin{proof}
It is enough to verify that  the graph $G$ satisfies the hypothesis of Proposition~\ref{prop:expANDhom}.
Let $g_0$ be a vertex of $G$, let $B_n$ denote the subgraph expanded by the collection of vertices at distance at most $n$ from $g_0$, and let $S_n$ denote  the subgraph of $G$ expanded by the collection of vertices at distance  exactly $n$ from $g_0$.  For $n\geq 1$, an induction argument shows that $S_n$ is connected and every vertex has degree $2$, i.e. it is a cycle. Moreover, any vertex of $S_n$ has degree $3$ or $4$ as a vertex of $B_n$.  Let $s_n$, $a_n$, and $b_n$ denote the cardinality of $S_n$, the number of vertices of $S_n$ having degree $3$ in $B_n$, and the number of vertices of $S_n$ having degree $4$ in $B_n$ respectively. Then one observes that the following relations hold for $n\geq 1$:
\begin{equation}
s_n = a_n+b_n,  \quad  a_{n+1}= 2a_n+b_n,  \quad  b_{n+1}=s_n, \quad s_1=a_1=7, \quad b_1=0.
\end{equation}
An induction argument shows that $s_n=7f_{2n}$ and $a_n=7f_{2n-1}$ where $f_n$ is the Fibonacci sequence with $f_0=f_1=1$, which is a well known property of this tiling.

Let $A$ be a subset of vertices of $S_n$, and let $A^*$ be the set of vertices of $S_{n+1}$ which are adjacent in $G$  to a vertex in $A$. In $G$, each vertex $v$ of $A$ is adjacent to at least $3$ vertices of $A^*$. Indeed, if $v \in A$ has degree $3$ as a vertex of $B_n$, then $v$ is adjacent to $4$ vertices of $A^*$; if $v \in A$ has degree $4$ as a vertex of $B_n$, then $v$ is adjacent to $3$ vertices of $A^*$. It follows that for every $n$, and for every subset of vertices $A$ of $S_n$, we have that
\begin{equation} |A^*| \geq 2 |A| \end{equation} for every $A \subset S_n.$
\end{proof}

\section{The growth function and containment properties}\label{sec:growth}

\begin{definition} \label{def:growth}  Let $G$ be a locally finite graph and let $g_0$ be a vertex of $G$.  The \emph{growth function  of $G$ based at $g_0$} is the function $\beta\colon \N \to \N$ where $\beta_n$ is the number of vertices of $G$ at distance at most $n$ from $g_0$. The graph $G$ has \emph{polynomial growth of degree at most $d$} if there exists a constant $c$ such that $\beta_n \leq cn^d$ for all $n\geq 1$.
\end{definition}

\begin{remark}\label{rem:growth}
Let $g_0,g_1$ be vertices of $G$ at distance $k$. The corresponding growth functions satisfy
$ \beta_{g_0}(n) \leq \beta_{g_1}(n+k)$, since $B_G(g_0, n) \subseteq B_G(g_1, n+k)$. In particular, for a locally finite connected graph having polynomial growth of degree at most $d$ is independent of the base point.
\end{remark}

\begin{theorem} \label{thm:polynomialgrowth}
Let $G$ be a connected graph with polynomial growth of degree $d\geq 2$. Then $G$ satisfies the $O(n^{d-2})$--containment  property.
\end{theorem}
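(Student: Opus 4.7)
The plan is to surround the fire with the protective sphere $S_G(R, g_0)$ for a carefully chosen radius $R$, so that $s_R$ is small enough to be covered by firefighters before the fire reaches it. By Remark~\ref{rem:reduction} it suffices, for a single constant $c' = c'(c,d)$, to produce a $\{\lceil c' n^{d-2}\rceil\}$-containment strategy for every initial fire of the form $X_0 = B_G(g_0, r_0)$.

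The first step is a pigeonhole argument on the growth function. Since $\beta(n) \leq c n^d$, the inequality $\sum_{R=2r_0+2}^{4r_0+4} s_R \leq c(4r_0+4)^d$ combined with the fact that the range contains at least $2(r_0+1)$ integers yields some $R \in [2r_0+2,\, 4r_0+4]$ with $s_R \leq C(r_0+1)^{d-1}$, for a constant $C$ depending only on $c$ and $d$. The window is tuned so that simultaneously $R - r_0 - 1 \geq r_0+1$, giving the firefighters enough turns before the fire can touch $S_G(R, g_0)$.

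The second step is to verify that the cumulative budget satisfies $\sum_{n=1}^{R-r_0-1} \lceil c' n^{d-2}\rceil \geq \tfrac{c'}{d-1}(r_0+1)^{d-1}$ and so exceeds $s_R$ once $c'$ is chosen sufficiently large. I then enumerate $S_G(R, g_0)$ and, at each turn $n \leq R - r_0 - 1$, let $W_n$ consist of up to $\lceil c' n^{d-2}\rceil$ still-unprotected vertices of this sphere, taking $W_n = \emptyset$ for later $n$. These placements are legal because the fire at time $n$ is contained in $B_G(g_0, r_0+n)$, and hence vertices of $S_G(R, g_0)$ are fire-free whenever $n < R - r_0$.

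Once every vertex of $S_G(R, g_0)$ is protected, any path in $G$ from a burning vertex to a vertex outside $B_G(g_0, R-1)$ must cross $S_G(R, g_0)$, so $X_n \subseteq B_G(g_0, R-1)$ for all $n$. Local finiteness makes this ball finite, and Proposition~\ref{prop:locfin} then gives that $\{X_n\}$ is eventually constant. The main technical hurdle is choosing the pigeonhole window and the constant $c'$ uniformly in $r_0$: for small initial fires the early firefighter budget is thin, so the window $[2r_0+2,\, 4r_0+4]$ must be tailored so that both $s_R$ and the accumulated budget scale as $(r_0+1)^{d-1}$, after which the arithmetic is routine.
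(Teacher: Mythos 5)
Your proof is correct and follows the same overall strategy as the paper (choose a sphere $S_G(R,g_0)$ of controlled size far enough from the initial fire, then seal it before the fire arrives), but the crucial step of \emph{locating} a good sphere is implemented differently. The paper proceeds indirectly: it uses Faulhaber's formula and a contradiction argument to show that $s_n < dcn^{d-1}$ holds for infinitely many $n$, and then that for any starting radius $m$ there are infinitely many $r$ with $s_r \leq (d-1)(dc+1)\sum_{k=1}^{r-m}k^{d-2}$; a suitable $r$ is then extracted from this infinite set. You instead run a direct averaging argument over the bounded window $[2r_0+2, 4r_0+4]$: since $\sum_{R=2r_0+2}^{4r_0+4} s_R \leq \beta(4r_0+4) \leq c(4r_0+4)^d$ and the window has $2r_0+3$ entries, some $R$ in it has $s_R \leq \tfrac{1}{2}c\,4^d(r_0+1)^{d-1}$, and the window is sized so that $R-r_0-1 \geq r_0+1$ turns are available. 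This is more elementary (it avoids Faulhaber and the contradiction step entirely), and it is also more explicit: it produces a clean quantitative bound on $R$ and hence on the containment time, which the paper's ``infinitely many $n$'' argument does not. The remaining bookkeeping — that the cumulative budget $\sum_{n=1}^{R-r_0-1}\lceil c'n^{d-2}\rceil$ exceeds $s_R$ once $c' \geq (d-1)\cdot \tfrac{1}{2}c\,4^d$, that vertices of $S_G(R,g_0)$ are still unburnt at each protection step since $X_{n-1} \subseteq B_G(g_0, r_0+n-1)$ and $n-1 < R-r_0$, and that a sealed sphere traps the fire in the finite ball $B_G(g_0,R-1)$ so Proposition~\ref{prop:locfin} applies — is all sound.
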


The strategy of the proof is to show that given an initial (finite) fire $X_0$ and a vertex $g_0\in G$, one can choose an integer $r>0$ such that $X_0 \subset B_G(g_0, r)$ and all the vertices of the  sphere $S_r=\{g\in G \colon \dist (g_0, g)=r\}$ can be protected before the fire reaches them. Then it follows that the fire cannot extend beyond distance $r$ from $g_0$ and hence it has been contained.

\begin{proof}[Proof of Theorem~\ref{thm:polynomialgrowth}]
Let $\beta\colon \N\to\N$ be the growth function of $G$ based at the vertex $g_0$. Let $c>0$ such that $\beta_n\leq cn^d$ for every $n\geq 1$. Let $s_{n+1}$ denote the difference $\beta_{n+1}-\beta_n$ for $n\geq 0$, let $s_0=1$, and observe that $\beta_n=\sum_{k=0}^ns_k$.

Recall Faulhaber's formula
\begin{equation}p_{n,d} = \sum_{k=1}^n k^{d-1}
= \frac1d n^d + \frac12n^{d-1} + \frac1d \sum_{j=2}^{d-1}  {d \choose j} B_j n^{d -j}  ,
\end{equation}
where $B_j$ denotes the $j$-th  Bernoulli number.

Observe that for every positive integer $m>0$ there is $N>0$ such that
 \begin{equation}\label{eq:polynomial2}
 d c n^{d-1} \leq (d-1)(dc+1) p_{n-m, d-1}
 \end{equation}
for every $n\geq N$. This follows from the observation that  both expressions are polynomials in $n$ of degree $d-1$ with leading coefficients $dc$ and $dc+1$ respectively.

We claim that there are infinitely many integers $n$ such that
\begin{equation}\label{eq:polynomial3} s_n <  d c n^{d-1}.\end{equation}
Suppose not, then there is $m>0$ such that $d c n^{d-1} \leq s_n$ for every $n> m$. Then
\begin{equation}\label{eq:degreed}
dcp_{n,d}- dcp_{m,d} + \beta_{m}   =   \sum_{k=m+1}^n dck^{d-1}  +  \beta_{m } \leq
\sum_{k=m+1}^n s_k +  \beta_{m} = \beta_n \leq cn^d
  \end{equation}
for every $n>m$. Since $c>0$ and the expression on the left of~\eqref{eq:degreed} is a polynomial in $n$ with leading terms $cn^d + \frac12dcn^{d-1}$, we have that~\eqref{eq:degreed} cannot hold for every $n>m$ and we have reached a contradiction. Therefore the claim holds.

From the statements on inequalities~\eqref{eq:polynomial2}
and~\eqref{eq:polynomial3}, we have that for every $m>0$, the inequality
\begin{equation}\label{eq:polynomial5}
s_n \leq (d-1)(dc+1) p_{n-m, d-1}
\end{equation}
holds for infinitely many values of $n$.

Let \begin{equation}
f_n =  (d-1)(dc+1) n^{d-2}.
\end{equation}
We claim that $G$ has the $(\{f_n\}, 1)$-containment property. Let $m>0$ and let $X_0$ be $B_G(g_0, m)$.  Since~\eqref{eq:polynomial5} holds for infinitely many $n$, choose a positive integer $r$  such that $s_r\leq (d-1)(dc+1) p_{r-m, d-1}$.
Since the set $S_r=\{g\in G\colon \dist(g_0, g)=r\}$ has cardinality $s_r$, it admits a partition  $W_1\cup \cdots \cup W_{r-m}$  such that  $W_k$ has at most $ (d-1)(dc+1) k^{d-2}$ elements. Hence $|W_k|\leq f_k$ for $1\leq k\leq r-m$. It follows that $\{W_k\colon k=1,\ldots ,r-m\}$ is a $(f_n, 1)$-containment strategy for $X_0$, since under such strategy $X_k=B_G(g_0, m+k)$ for $k<r-m$, and $X_k=B_G(g_0, r-m-1)$ for every $k\geq r-m$.
\end{proof}

Below we state a corollary of the proof of Theorem~\ref{thm:polynomialgrowth}.
\begin{corollary}\label{cor:mohar}
Let $G$ be a locally finite connected graph.
Let $g_0$ be a vertex of $G$ and let $s_n$ be the number of vertices at distance exactly $n$ from $g_0$.  If $\liminf_{n\to \infty} \frac{s_n}{n^{d-1}}$ is finite then $G$ satisfies the $O(n^{d-2})$--containment  property.
 \end{corollary}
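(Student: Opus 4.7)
The plan is to reuse the proof of Theorem~\ref{thm:polynomialgrowth} almost verbatim. Examining that argument, the polynomial growth hypothesis is only used to establish inequality~\eqref{eq:polynomial3}, namely that $s_n \leq dc\, n^{d-1}$ for infinitely many $n$; the rest of the proof then builds a containment strategy by partitioning an appropriate sphere $S_r$ as $W_1\cup\cdots\cup W_{r-m}$ with $|W_k|\leq f_k$. Under the hypothesis of Corollary~\ref{cor:mohar}, the analogous inequality $s_n \leq C\, n^{d-1}$ for infinitely many $n$ is immediate for any $C$ strictly greater than $\liminf s_n/n^{d-1}$, so only the selection of $f_n$ and of $r$ needs to be revisited.

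Fix $C > \liminf_{n\to\infty} s_n/n^{d-1}$ and let $\mathcal{N}=\{n\colon s_n\leq C n^{d-1}\}$, which is infinite by hypothesis. Define $f_n = dC\, n^{d-2}$. By Remark~\ref{rem:reduction}, to establish the $\{f_n\}$-containment property it is enough to produce a $(\{f_n\},1)$-containment strategy for each initial fire of the form $X_0 = B_G(g_0,m)$. Faulhaber's formula gives
\[
\sum_{k=1}^{r-m} f_k \;=\; dC\, p_{r-m,\, d-1} \;=\; \frac{dC}{d-1}\, r^{d-1} + O(r^{d-2}),
\]
and since $dC/(d-1) > C$ for $C>0$ and $d\geq 2$, the right-hand side eventually dominates $C r^{d-1}$. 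Hence one can pick $r \in \mathcal{N}$ large enough that $\sum_{k=1}^{r-m} f_k \geq C r^{d-1} \geq s_r$.

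The sphere $S_r$ then admits a partition $W_1\cup\cdots\cup W_{r-m}$ with $|W_k|\leq f_k$, obtained by greedy assignment. Exactly as in the final paragraph of the proof of Theorem~\ref{thm:polynomialgrowth}, $\{W_k\}$ is then a $(\{f_n\},1)$-containment strategy for $X_0$: the set on fire at time $k<r-m$ is contained in $B_G(g_0,m+k)$, and once the entirety of $S_r$ has been protected the fire cannot spread further. The only point that requires care is that the sequence $\{f_n\}$ must be fixed independently of $m$; this works because the substitution $r\mapsto r-m$ only affects lower-order terms in $p_{r-m,d-1}$, so any leading coefficient strictly larger than $(d-1)C$ suffices uniformly in $m$, and the choice $dC$ is one convenient such coefficient.
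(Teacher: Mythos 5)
Your proposal is correct and follows essentially the same route as the paper's own proof: both isolate that the polynomial-growth hypothesis in Theorem~\ref{thm:polynomialgrowth} is used only to guarantee $s_n \leq C n^{d-1}$ along an infinite subsequence, note that the $\liminf$ hypothesis delivers exactly that, and then reuse the sphere-partition containment strategy verbatim. You spell out the Faulhaber computation and choose the leading coefficient $dC$ a bit differently than the paper's $(d-1)(dc+1)$, but that is a cosmetic difference; if anything, fixing $C$ strictly above the $\liminf$ as you do handles the $\liminf=0$ case a touch more cleanly than the paper's phrasing.
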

\begin{proof}
To reach the conclusion, it is enough to verify that the statements on inequalities~\eqref{eq:polynomial2} and~\eqref{eq:polynomial3} in the proof of Theorem~\ref{thm:polynomialgrowth} hold. Observe that the statement that  for every positive integer $m>0$ there is $N>0$ such that inequality~\eqref{eq:polynomial2} holds for every $n\geq N$ is independently of the graph $G$. On the other hand, if $\liminf_{n\to \infty} \frac{s_n}{n^{d-1}}=c$, then there are infinitely many integers $n$ for which  inequality~\eqref{eq:polynomial3} holds.
\end{proof}

\begin{question}[Bojan Mohar]\label{que:mohar}
Does the converse of Corollary~\ref{cor:mohar} hold?
\end{question}

\begin{corollary} \label{cor:quadratic}
Let $G$ be a connected graph with quadratic  growth. Then $G$ satisfies the constant containment  property.
\end{corollary}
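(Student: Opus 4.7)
The plan is to derive this as a direct specialization of Theorem~\ref{thm:polynomialgrowth} by taking $d=2$. A graph with quadratic growth is, by Definition~\ref{def:growth}, a graph with polynomial growth of degree at most $2$, so Theorem~\ref{thm:polynomialgrowth} applies and yields the $O(n^{d-2})$-containment property, which in this case is the $O(n^0)$-containment property. By the notation convention fixed in Section~\ref{sec:preliminaries}, the $O(n^0)$-containment property is precisely the constant containment property, and the corollary follows.

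To make this concrete, I would unpack what the proof of Theorem~\ref{thm:polynomialgrowth} gives when $d=2$. Let $c>0$ be a constant with $\beta_n \leq cn^2$ for all $n \geq 1$, where $\beta$ is the growth function of $G$ based at some vertex $g_0$. The proof of Theorem~\ref{thm:polynomialgrowth} produces the sequence
\[
f_n = (d-1)(dc+1)n^{d-2},
\]
which for $d=2$ collapses to the constant $f_n = 2c+1$. The same argument then shows that for every finite initial fire $X_0 \subseteq B_G(g_0,m)$ one can pick a radius $r$ with $s_r \leq (2c+1)(r-m)$, partition the sphere $S_r$ into $r-m$ sets $W_1,\dots,W_{r-m}$ of size at most $2c+1$, and use this as a $(\{2c+1\},1)$-containment strategy that encloses the fire inside $B_G(g_0,r-1)$.

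Since the proof is a mere specialization, there is no real obstacle: the work has already been absorbed into Theorem~\ref{thm:polynomialgrowth}. The only step worth flagging explicitly is checking that the definition of polynomial growth of degree at most $2$ coincides with the informal "quadratic growth" of the introduction, and that $O(n^0)$-containment is the same as constant containment; both are immediate from the definitions in Section~\ref{sec:preliminaries}.
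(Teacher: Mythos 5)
Your proposal is correct and matches the paper exactly: the corollary is stated as an immediate specialization of Theorem~\ref{thm:polynomialgrowth} with $d=2$, and no separate proof is given in the paper. The additional unpacking you supply (the constant $2c+1$, the choice of $r$ with $s_r \leq (2c+1)(r-m)$, and the sphere partition) is a faithful restatement of that theorem's proof in the case $d=2$ and is a reasonable sanity check, though not logically necessary.
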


The converse of Corollary~\ref{cor:quadratic} does not hold even in the class of bounded degree graphs as the following examples illustrate.

\begin{example} \label{ex:subexponential}
There is a graph such that every vertex has degree at most four, it has subexponential growth, and it has the $(1,r)$-containment property for every $r\geq 1$.  Indeed, consider the sequence $\{s_n\}_{n\in\N}$
\begin{equation} 0, 1, 0, 1, 2, 1, 0 , 1, 2, 3, 2, 1, 0,  1, 2, 3, 4, 3, 2, 1, 0, \cdots .\end{equation}
Let $G$ be the graph with vertex set
\begin{equation}V=\left\{v_{n,x} \mid n\in\{0,1, \ldots \}   \mbox{ and } x\in \{1, \ldots , 2^{s_n} \} \right\},\end{equation}
and edge set
\begin{equation}E=\left\{ (v_{n,x}, v_{m, y} ) \mid  |m-n|=1 \mbox{ and } s_m-s_n=1 \mbox{ and } 2x-y \in \{0,1\} \right\}.\end{equation}
This graph is illustrated in Figure~\ref{fig:2sn}. Its growth function $\beta$ based at $v_{0,1}$ satisfies
\begin{equation}\nonumber \begin{split}
\beta (n(n+1)) & = 1 + \sum_{k=1}^n \left [ 2 \sum_{i=0}^{k} 2^i   - 2^{k} -1 \right ] = 3\cdot 2^{n+1} - 3n - 5 \\
\end{split} \end{equation}
since the number of vertices between $v_{k(k-1), 1}$ and $v_{k(k+1),1}$, including them, is exactly
$2 \sum_{i=0}^{k} 2^i   - 2^{k}$. In particular, $\beta (n^2)$ is roughly $2^{n}$.
\end{example}

\begin{figure}
\center

\begin{tikzpicture}[yscale=0.5]

\draw (0,0) node[left] {$v_{0,1}$} -- (0.5,0.5) node[above] {$v_{1,2}$} -- (1,0) ;
\draw (0,0) -- (0.5,-0.5) node[below] {$v_{1,1}$} -- (1,0);
\draw [fill] (0,0) circle [x radius=0.05,y radius=0.1];
\draw [fill] (0.5,0.5) circle [x radius=0.05,y radius=0.1];
\draw [fill] (0.5,-0.5) circle [x radius=0.05,y radius=0.1];
\draw [fill] (1,0) circle [x radius=0.05,y radius=0.1];
\draw (1,0) -- (1.5,0.75) node[above] {$v_{3,2}$} -- (2,1.25) node[above] {$v_{4,4}$} -- (2.5,0.75)  -- (3,0);
\draw (1.5,0.75) --  (2,0.25) -- (2.5,0.75);
\draw (1,0) -- (1.5,-0.75) -- (2,-1.25) node[below] {$v_{4,1}$} -- (2.5,-0.75) -- (3,0);
\draw (1.5,-0.75) node[below] {$v_{3,1}$} --  (2,-0.25) -- (2.5,-0.75) ;
\draw [fill] (1.5,0.75) circle [x radius=0.05,y radius=0.1];
\draw [fill] (1.5,-0.75) circle [x radius=0.05, y radius=0.1];
\draw [fill] (2,1.25) circle [x radius=0.05, y radius=0.1];
\draw [fill] (2,0.25) circle [x radius=0.05, y radius=0.1];
\draw [fill] (2,-0.25) circle [x radius=0.05, y radius=0.1];
\draw [fill] (2,-1.25) circle [x radius=0.05, y radius=0.1];
\draw [fill] (2.5,0.75) circle [x radius=0.05, y radius=0.1];
\draw [fill] (2.5,-0.75) circle [x radius=0.05, y radius=0.1];
\draw [fill] (3,0) circle [x radius=0.05, y radius=0.1];
\draw (3,0) -- (3.5,1.5) node[above] {$v_{7,2}$} -- (4,2.25) -- (4.5,2.75) node[above] {$v_{9,8}$} -- (5,2.25) -- (5.5,1.5) -- (6,0);
\draw (3.5,1.5) -- (4,0.75) -- (4.5,.25) -- (5,0.75) -- (5.5,1.5);
\draw (4,0.75) --  (4.5,1.25) -- (5,0.75);
\draw (4,2.25) --  (4.5,1.75) -- (5,2.25);
\draw (3,0) -- (3.5,-1.5) node[below] {$v_{7,1}$} -- (4,-2.25) -- (4.5,-2.75) node[below] {$v_{9,1}$} -- (5,-2.25) -- (5.5,-1.5) -- (6,0) node[right] {$v_{12,1}$};
\draw (3.5,-1.5) -- (4,-0.75) -- (4.5,-.25) -- (5,-0.75) -- (5.5,-1.5);
\draw (4,-0.75) --  (4.5,-1.25) -- (5,-0.75);
\draw (4,-2.25) --  (4.5,-1.75) -- (5,-2.25);
\draw [fill] (3.5,1.5) circle [x radius=0.05, y radius=0.1];
\draw [fill] (4,2.25) circle [x radius=0.05, y radius=0.1];
\draw [fill] (4.5,2.75) circle [x radius=0.05, y radius=0.1];
\draw [fill] (5,2.25) circle [x radius=0.05, y radius=0.1];
\draw [fill] (5.5,1.5) circle [x radius=0.05, y radius=0.1];
\draw [fill] (6,0) circle [x radius=0.05, y radius=0.1];
\draw [fill] (4,0.75) circle [x radius=0.05, y radius=0.1];
\draw [fill] (4.5,0.25) circle [x radius=0.05, y radius=0.1];
\draw [fill] (5,0.75) circle [x radius=0.05, y radius=0.1];
\draw [fill] (4.5,1.25) circle [x radius=0.05, y radius=0.1];
\draw [fill] (4.5,1.75) circle [x radius=0.05, y radius=0.1];
\draw [fill] (3.5,-1.5) circle [x radius=0.05, y radius=0.1];
\draw [fill] (4,-2.25) circle [x radius=0.05, y radius=0.1];
\draw [fill] (4.5,-2.75) circle [x radius=0.05, y radius=0.1];
\draw [fill] (5,-2.25) circle [x radius=0.05, y radius=0.1];
\draw [fill] (5.5,-1.5) circle [x radius=0.05, y radius=0.1];
\draw [fill] (4,-0.75) circle [x radius=0.05, y radius=0.1];
\draw [fill] (4.5,-0.25) circle [x radius=0.05, y radius=0.1];
\draw [fill] (5,-0.75) circle [x radius=0.05, y radius=0.1];
\draw [fill] (4.5,-1.25) circle [x radius=0.05, y radius=0.1];
\draw [fill] (4.5,-1.75) circle [x radius=0.05, y radius=0.1];
\draw (6,0) -- (6.5,3) -- (7,4.5) node[left] {$v_{14,4}$} -- (7.5,5.25) -- (8,5.75) node[above] {$v_{16,16}$} -- (8.5,5.25) -- (9,4.5) node[right] {$v_{18,4}$} -- (9.5,3) -- (10,0);
\draw (6.5,3) -- (7,1.5) -- (7.5,2.25) -- (8,2.75) -- (8.5,2.25) -- (9,1.5) -- (9.5,3);
\draw (7,1.5) -- (7.5,0.75) -- (8,1.25) -- (8.5,0.75) -- (9,1.5);
\draw (7.5,0.75) -- (8,0.25) -- (8.5,0.75);
\draw (7.5,2.25) -- (8,1.75) -- (8.5,2.25);
\draw (7,4.5) -- (7.5,3.75) -- (8,3.25) -- (8.5,3.75) -- (9,4.5);
\draw (7.5,3.75) -- (8,4.25) -- (8.5,3.75);
\draw (7.5,5.25) -- (8,4.75) -- (8.5,5.25);
\draw (6,0) -- (6.5,-3) -- (7,-4.5) node[left] {$v_{14,1}$} -- (7.5,-5.25) -- (8,-5.75) node[below] {$v_{16,1}$} -- (8.5,-5.25) -- (9,-4.5) node[right] {$v_{18,1}$} -- (9.5,-3) -- (10,0) node[below right] {$v_{20,1}$};
\draw (6.5,-3) -- (7,-1.5) -- (7.5,-2.25) -- (8,-2.75) -- (8.5,-2.25) -- (9,-1.5) -- (9.5,-3);
\draw (7,-1.5) -- (7.5,-0.75) -- (8,-1.25) -- (8.5,-0.75) -- (9,-1.5);
\draw (7.5,-0.75) -- (8,-0.25) -- (8.5,-0.75);
\draw (7.5,-2.25) -- (8,-1.75) -- (8.5,-2.25);
\draw (7,-4.5) -- (7.5,-3.75) -- (8,-3.25) -- (8.5,-3.75) -- (9,-4.5);
\draw (7.5,-3.75) -- (8,-4.25) -- (8.5,-3.75);
\draw (7.5,-5.25) -- (8,-4.75) -- (8.5,-5.25);
\draw [fill] (6.5,3) circle [x radius=0.05, y radius=0.1];
\draw [fill] (7,4.5) circle [x radius=0.05, y radius=0.1];
\draw [fill] (7.5,5.25) circle [x radius=0.05, y radius=0.1];
\draw [fill] (8,5.75) circle [x radius=0.05, y radius=0.1];
\draw [fill] (8.5,5.25) circle [x radius=0.05, y radius=0.1];
\draw [fill] (9,4.5) circle [x radius=0.05, y radius=0.1];
\draw [fill] (9.5,3) circle [x radius=0.05, y radius=0.1];
\draw [fill] (10,0) circle [x radius=0.05, y radius=0.1];
\draw [fill] (7,1.5) circle [x radius=0.05, y radius=0.1];
\draw [fill] (7.5,2.25) circle [x radius=0.05, y radius=0.1];
\draw [fill] (8,2.75) circle [x radius=0.05, y radius=0.1];
\draw [fill] (8.5,2.25) circle [x radius=0.05, y radius=0.1];
\draw [fill] (9,1.5) circle [x radius=0.05, y radius=0.1];
\draw [fill] (7.5,0.75) circle [x radius=0.05, y radius=0.1];
\draw [fill] (8,1.25) circle [x radius=0.05, y radius=0.1];
\draw [fill] (8.5,0.75) circle [x radius=0.05, y radius=0.1];
\draw [fill] (7.5,3.75) circle [x radius=0.05, y radius=0.1];
\draw [fill] (8,4.25) circle [x radius=0.05, y radius=0.1];
\draw [fill] (8.5,3.75) circle [x radius=0.05, y radius=0.1];
\draw [fill] (8,3.25) circle [x radius=0.05, y radius=0.1];
\draw [fill] (8,0.25) circle [x radius=0.05, y radius=0.1];
\draw [fill] (8,1.75) circle [x radius=0.05, y radius=0.1];
\draw [fill] (8,4.75) circle [x radius=0.05, y radius=0.1];

\draw [fill] (6.5,-3) circle [x radius=0.05, y radius=0.1];
\draw [fill] (7,-4.5) circle [x radius=0.05, y radius=0.1];
\draw [fill] (7.5,-5.25) circle [x radius=0.05, y radius=0.1];
\draw [fill] (8,-5.75) circle [x radius=0.05, y radius=0.1];
\draw [fill] (8.5,-5.25) circle [x radius=0.05, y radius=0.1];
\draw [fill] (9,-4.5) circle [x radius=0.05, y radius=0.1];
\draw [fill] (9.5,-3) circle [x radius=0.05, y radius=0.1];
\draw [fill] (7,-1.5) circle [x radius=0.05, y radius=0.1];
\draw [fill] (7.5,-2.25) circle [x radius=0.05, y radius=0.1];
\draw [fill] (8,-2.75) circle [x radius=0.05, y radius=0.1];
\draw [fill] (8.5,-2.25) circle [x radius=0.05, y radius=0.1];
\draw [fill] (9,-1.5) circle [x radius=0.05, y radius=0.1];
\draw [fill] (7.5,-0.75) circle [x radius=0.05, y radius=0.1];
\draw [fill] (8,-1.25) circle [x radius=0.05, y radius=0.1];
\draw [fill] (8.5,-0.75) circle [x radius=0.05, y radius=0.1];
\draw [fill] (7.5,-3.75) circle [x radius=0.05, y radius=0.1];
\draw [fill] (8,-4.25) circle [x radius=0.05, y radius=0.1];
\draw [fill] (8.5,-3.75) circle [x radius=0.05, y radius=0.1];
\draw [fill] (8,-3.25) circle [x radius=0.05, y radius=0.1];
\draw [fill] (8,-0.25) circle [x radius=0.05, y radius=0.1];
\draw [fill] (8,-1.75) circle [x radius=0.05, y radius=0.1];
\draw [fill] (8,-4.75) circle [x radius=0.05, y radius=0.1];

\draw [very thick][dotted] (10,0) -- (10.5,0);

\end{tikzpicture}
\caption{A graph with subexponential growth in which a single firefighter can contain any fire.}
\label{fig:2sn}
\end{figure}
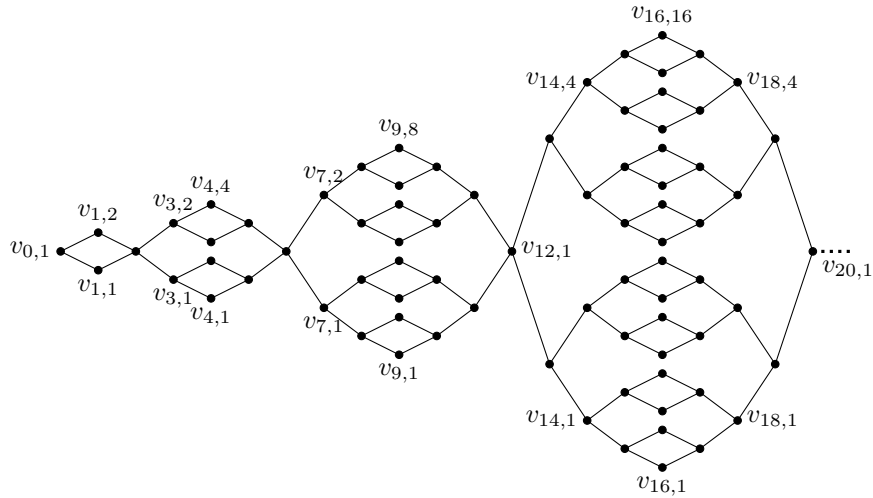

For a function $\beta\colon \N\to \N$ we denote by $\beta'\colon  \N\to \N$ the function given by $\beta'(0)=\beta(0)$ and $\beta'(n)=\beta(n)-\beta(n-1)$. Observe that in the case that $\beta$ is the growth function of a graph $G$ with respect to a vertex $g_0$, then $\beta'(n)$ is the cardinality of the sphere $S_G(n, g_0)$.  In particular, $\beta''(n)$ is the difference of cardinalities between the spheres $S_n$ and $S_{n-1}$.

\begin{theorem}\label{fImpliesContains}
Let $G$ be a locally finite connected graph with growth function $\beta\colon \N \to \N$. Suppose that $\beta''\colon \N\to \N$ is non-negative and non-decreasing. Then $G$ has the $\{f_n\}$-containment property with $f_n = 3 \beta''(2n)$.
\end{theorem}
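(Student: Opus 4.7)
The plan is to adapt the argument from Theorem~\ref{thm:polynomialgrowth}: given a finite initial fire $X_0$, pick $m \geq 1$ with $X_0 \subseteq B_G(g_0, m)$, and aim to protect the entire sphere $S_{2m}$ over the first $m$ turns. Once $S_{2m}$ is protected, the fire is trapped inside the finite ball $B_G(g_0, 2m-1)$ and must eventually stabilize by Proposition~\ref{prop:locfin}.

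The crux is the budget estimate $s_{2m} \leq \sum_{k=1}^m f_k$. Telescoping the sphere size and grouping consecutive pairs gives
\[
s_{2m} \;=\; 1 + \sum_{j=1}^{2m}\beta''(j) \;=\; 1 + \sum_{k=1}^m \bigl(\beta''(2k-1) + \beta''(2k)\bigr),
\]
and the non-decreasing hypothesis on $\beta''$ yields $\beta''(2k-1) \leq \beta''(2k)$, so $s_{2m} \leq 1 + 2 \sum_{k=1}^m \beta''(2k)$. With the convention $\beta''(0) = \beta'(0) = s_0 = 1$, the non-decreasing hypothesis forces $\beta''(k) \geq 1$ for every $k \geq 0$, so $\sum_{k=1}^m \beta''(2k) \geq m \geq 1$ and the additive $1$ is absorbed:
\[
s_{2m} \;\leq\; 3 \sum_{k=1}^m \beta''(2k) \;=\; \sum_{k=1}^m f_k.
\]

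With the budget secured, I would partition $S_{2m}$ greedily as $W_1 \cup \cdots \cup W_m$ with $|W_k| \leq f_k$, and set $W_k = \emptyset$ for $k > m$. At the start of turn $k \in \{1,\ldots,m\}$ the fire has had only $k-1$ spread steps from $X_0$, hence $X_{k-1} \subseteq B_G(g_0, m+k-1)$, which is disjoint from $S_{2m}$ because $m+k-1 \leq 2m-1$; so each $W_k$ is a legal choice. By the end of turn $m$ every vertex of $S_{2m}$ is protected, confining the fire to $B_G(g_0, 2m-1)$. The main subtlety I anticipate is the boundary bookkeeping underlying $\beta''(2k) \geq 1$: the argument hinges on the non-decreasing hypothesis forcing $\beta''$ to remain at least $1$, and without the specific convention $\beta''(0) = 1$ one would need a slightly worse constant than $3$ in the statement.
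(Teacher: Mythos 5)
Your proof is correct and follows essentially the same plan as the paper's: protect a single sphere in advance, budgeting the cost of the sphere against $\sum f_k$ via the pairing $\beta''(2k-1)\le \beta''(2k)$. The small difference is how the two arguments close the budget. With $X_0\subseteq B_G(g_0,n)$, the paper chooses the target radius $m\ge 2n$ \emph{sufficiently large} so that $\sum_{k=1}^{m-n}\beta''(2k)\ge\beta'(n)$, reserving one of the three copies of the budget for the initial sphere size $\beta'(n)$; this forces $m$ to be strictly larger than $2n$ in general (e.g.\ when $\beta''$ is constant). You instead take the target radius to be exactly $2n$ and observe that the leftover additive term is only $\beta'(0)=1$, which is absorbed because the non-decreasing hypothesis together with $\beta''(0)=\beta(0)=1$ gives $\beta''\ge 1$. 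This is a genuine (minor) sharpening: the same constant $3$ but with the fire always contained inside $B_G(g_0,2n-1)$ rather than inside some larger ball depending on how fast $\beta''$ grows. Your accounting of legality (that $X_{k-1}\subseteq B_G(g_0,m+k-1)$ is disjoint from $S_{2m}$ for $1\le k\le m$) and the appeal to Proposition~\ref{prop:locfin} are exactly what the paper's argument relies on as well.
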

\begin{proof}
Assume that the growth function is with respect to the vertex $g_0$.  Without loss of generality assume the initial fire is $B_G(x_0, n)$ where $n\in \mathbb{N}$ is arbitrary and $g_0$ is the root of $G$.
It is sufficient to show that  there exists $m>n$ such that $\sum_{k=1}^{m-n} \fffun{k} \geq \growthp{m}$, since then the fire can be contained by protecting all vertices of the sphere of radius $m$ about $g_0$  within $m-n$ turns.

Choose $m$ sufficiently large so that  $\sum_{k=1}^{m-n}\growthpp{2k} \geq \growthp{n}$ and  $m \geq 2n$. Then
\begin{equation}  2\sum_{k=1}^{n} \growthpp{2k} \geq \sum_{k=1}^{n} \left[ \growthpp{2k-1}+ \growthpp{2k} \right] \geq \sum_{k=n+1}^{2n} \growthpp{k} \end{equation}
and
$$2\sum_{k=n+1}^{m-n} \growthpp{2k} = 2\sum_{k=1}^{m-2n} \growthpp{2k+2n} \geq \sum_{k=1}^{m-2n} \growthpp{k+2n} = \sum_{k=2n+1}^{m} \growthpp{k}. $$
Therefore
$$2\sum_{k=1}^{m-n} \growthpp{2k} \geq  \sum_{k=n+1}^{2n} \growthpp{k} + \sum_{k=2n+1}^{m} \growthpp{k} = \sum_{k=n+1}^{m} \growthpp{k} .$$
Now we have
$$\sum_{k=1}^{m-n} \fffun{k} = 3\sum_{k=1}^{m-n} \growthpp{2k} \geq  \growthp{n} + \sum_{k=n+1}^{m} \growthpp{k} =  \growthp{m}$$
as required.
\end{proof}

\section{Homogeneous Growth}\label{sec:homogeneous}

\begin{definition}
Let $G$ be a  graph, let $g_0$ be a vertex of $G$. Let $S_n$ denote the sphere
of radius $n$ about $g_0$, and let $s_n$ denote the cardinality of $S_n$. For
subset $T$ of  $S_n$, let $T^*$ denote the subset of vertices of $S_{n+1}$ which are adjacent to at least one vertex of $T$. The growth of $G$ is \emph{homogeneous with respect to $g_0$} if there exists  $r\geq 0$ such that for any $n\geq r$ and  any non-empty subset $T$ of  $S_n$,
\begin{equation} \frac{| T^*|}{ |T| }   \geq \frac{s_{n+1}}{s_n} \geq 1.\end{equation}
 Under these conditions, we say that $G$ has \emph{homogeneous growth with respect to $g_0$ from radius $r$.}
\end{definition}

\begin{theorem}\label{thm:hgrowth}
Let $G$ be a graph which has homogeneous growth with respect to $g_0$, and let $\{f_n\}$ be a non-decreasing positive sequence. If $G$ has the $\{f_n\}$-containment property, then the series $ \sum_{n=1}^\infty \frac{f_n}{s_n}$ diverges.
\end{theorem}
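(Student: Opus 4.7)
The plan is to argue by contradiction along the lines of Proposition~\ref{prop:expANDhom}, replacing the uniform expansion factor $\lambda>1$ by the pointwise ratios $s_{n+1}/s_n$ furnished by the homogeneous growth hypothesis. Assume $G$ has the $\{f_n\}$-containment property yet $\sum_{n=1}^{\infty} f_n/s_n<\infty$, and let $r$ be a radius beyond which homogeneous growth holds. Since the tail of a convergent positive series tends to zero, choose an integer $R\geq r$ with
\[
\sum_{m=R+1}^{\infty} \frac{f_m}{s_m} \;<\; 1.
\]
This is the analogue of the choice~\eqref{eq:seriespe} in Proposition~\ref{prop:expANDhom}.

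Set the initial fire to $X_0 = B_G(g_0, R)$, so that $|T_0|=s_R$ where $T_k := S_{R+k}\cap X_k$ is the ``outer boundary'' of the fire at time $k$. Let $p_{k+1} := |T_k^{*}\setminus T_{k+1}|$ be the number of vertices of $S_{R+k+1}$ protected by time $k+1$. Homogeneous growth at radius $R+k\geq r$ gives $|T_k^{*}| \geq (s_{R+k+1}/s_{R+k})\,|T_k|$, and hence
\[
|T_{k+1}| \;\geq\; \frac{s_{R+k+1}}{s_{R+k}}\,|T_k| \;-\; p_{k+1}.
\]
Dividing by $s_{R+k+1}$ makes the recursion telescope; summing from $k=0$ to $\ell-1$ yields
\[
\frac{|T_\ell|}{s_{R+\ell}} \;\geq\; \frac{|T_0|}{s_R} - \sum_{k=1}^{\ell} \frac{p_k}{s_{R+k}} \;=\; 1 - \sum_{k=1}^{\ell} \frac{p_k}{s_{R+k}}.
\]

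Since at most $\sum_{k=1}^{\ell} f_k$ vertices are protected in the first $\ell$ turns, $\sum_{k=1}^{\ell} p_k \leq \sum_{k=1}^{\ell} f_k$, so Lemma~\ref{lem:easyineq} applied to the non-decreasing positive sequence $s_{R+k}$ gives $\sum_{k=1}^{\ell} p_k/s_{R+k} \leq \sum_{k=1}^{\ell} f_k/s_{R+k}$. The key new step, not present in Proposition~\ref{prop:expANDhom}, is the bound
\[
\sum_{k=1}^{\ell} \frac{f_k}{s_{R+k}} \;\leq\; \sum_{k=1}^{\ell} \frac{f_{R+k}}{s_{R+k}} \;\leq\; \sum_{m=R+1}^{\infty} \frac{f_m}{s_m} \;<\; 1,
\]
which uses crucially that $\{f_n\}$ is non-decreasing in order to replace $f_k$ by $f_{R+k}$. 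Combining these inequalities yields $|T_\ell|>0$ for every $\ell$, contradicting the fact that any containment strategy forces $T_\ell=\emptyset$ for all $\ell$ exceeding a sufficient time $N$ (since then $X_\ell\subseteq X_N\subseteq B_G(g_0, R+N)$ is disjoint from $S_{R+\ell}$).

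The main obstacle is conceptual rather than technical: the uniform exponential expansion of Proposition~\ref{prop:expANDhom} makes the associated series converge for free, whereas here convergence is precisely what is being assumed about $\sum f_n/s_n$. Turning that assumed convergence into a usable estimate on the \emph{shifted} series $\sum_{k} f_k/s_{R+k}$ is exactly what forces the monotonicity hypothesis on $\{f_n\}$; once this tail estimate is in place, the rest of the argument is a direct mimic of the proof of Proposition~\ref{prop:expANDhom}.
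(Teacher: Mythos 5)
Your proof is correct and follows essentially the same route as the paper's: start the fire on a ball of radius at least the homogeneity threshold, track the outer boundary $T_k = S_{R+k}\cap X_k$, use the homogeneous-growth ratio $s_{n+1}/s_n$ in place of a uniform $\lambda$, telescope after dividing by $s_{R+k}$, apply Lemma~\ref{lem:easyineq}, and then invoke monotonicity of $\{f_n\}$ to shift indices. The only cosmetic difference is that you package it as a contradiction by pre-selecting $R$ so the tail $\sum_{m>R} f_m/s_m<1$, whereas the paper fixes an arbitrary $m>r$ and directly derives $\sum_{k=1}^{\ell} f_{m+k}/s_{m+k}\geq 1$; these are logically interchangeable.
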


\begin{proof}
Suppose that $G$ has homogeneous growth with respect to $g_0$ from radius $r$, and  it has the $\{f_n\}$-containment property.   The homogeneous hypothesis implies that $G$ is an infinite graph.

Let $m$ be an arbitrary integer greater than $r$.  Let $\{W_k\colon k\geq 1\}$ be an $\{f_n\}$-containment strategy for the initial fire $X_0=B(g_0, m)$, let $X_k$ denote the set of vertices on fire at time $k$, and suppose that the fire is contained after time $\ell$, i.e., $X_k=X_{\ell-1}$ for every $k \geq \ell$ and $\ell$ is minimal with this property. Without loss of generality, assume that $W_i=\emptyset$ for $i> \ell$. Since $G$ is an infinite graph, $\ell>0$.

 Let $S_k$ denote  the collection of vertices at distance $k$ from $g_0$. For $k\geq 0$, let
\begin{equation} T_k = S_{m+k}\cap X_k,\end{equation}
 equivalently, $T_k$ is the set of vertices on fire in the sphere $S_{m+k}$ at time $k$. In particular,  $T_{k+1}$ is the set of vertices in $T_{k}^*=B_G(T_k, 1)\cap S_{m+k+1}$ which remain unprotected at time $k$.

From here on, the cardinalities of $S_k$ and $T_k$ and $T_k^*$  are denoted by $s_k$, $t_k$ and $t_{k+1}^*$ respectively (the shift of indexes is done on purpose).  Let
\begin{equation}p_k=t_{m+k}^*-t_{m+k}.\end{equation}
Observe that $p_k$ is \emph{at most} the number of vertices of the sphere $S_{m+k}$ that were protected up to time $k$.

Observe that the total number of vertices protected up to time $q$  is bounded above by $\sum_{k=1}^q f_k$, therefore
\begin{equation} \label{eq:sfigeqspi} \sum_{k=1}^q f_k \geq \sum_{k=1}^q  p_k\end{equation}
for every positive integer $q$.

The homogeneous growth assumption implies that
\begin{equation} \frac{t_{m+k}^*}{t_{m+k-1}} \geq \frac{s_{m+k}}{s_{m+k-1}}   \end{equation}
for every $k\geq 0$.
It follows that
\begin{equation} t_{m+k} = t^*_{m+k}- p_k \geq \frac{s_{m+k}}{s_{m+k-1}}t_{m+k-1} -   p_k\end{equation}
for every $k\geq 0$.
Therefore
\begin{equation} \label{eq:hgrowth3}\begin{split}
 & t_{m+\ell}^* \geq \frac{s_{m+\ell}}{s_{m+\ell-1}}t_{m+\ell-1} \\
& \geq   \frac{s_{m+\ell}}{s_{m+\ell-2}} t_{m+\ell-2} - \frac{s_{m+\ell}}{s_{m+\ell-1}} p_{\ell -1}    \\
& \geq     \frac{s_{m+\ell}}{s_{m+\ell-3}} t_{m+\ell-3} -  \frac{s_{m+\ell}}{s_{m+\ell-2}}  p_{ \ell-2}  - \frac{s_{m+\ell}}{s_{m+\ell-1}} p_{\ell -1}
  \\
&\quad \quad \vdots \\
& \geq \frac{s_{m+\ell}}{s_{m}} t_{m}  - \frac{s_{m+\ell}}{s_{m+1}} p_{1}  - \frac{s_{m+\ell}}{s_{m+2}} p_2  -   \ldots -  \frac{s_{m+\ell}}{s_{m+\ell-2}}  p_{ \ell-2}  - \frac{s_{m+\ell}}{s_{m+\ell-1}}   p_{\ell -1}.
\end{split}
\end{equation}

Since $p_\ell=  t_{m+\ell}^*$, re-arranging~\eqref{eq:hgrowth3} yields
\begin{equation} \label{eq:hgrowth4} \sum_{k=1}^{\ell} \frac{p_k}{s_{m+k}}   \geq \frac{t_m}{s_m}  =1 \end{equation}
The homogeneous hypothesis guarantees that the sequence $\{s_n\}$ is non-decreasing. Considering the inequality~\eqref{eq:sfigeqspi} and applying Lemma~\ref{lem:easyineq} to the sequences $\{f_k\}_{k=1}^\ell$, $\{p_k\}_{k=1}^\ell$ and $\{s_{m+k}\}_{k=1}^\ell$,  we obtain
\begin{equation} \label{eq:hgrowth5} \sum_{k=1}^{\ell} \frac{f_k}{s_{m+k}} \geq \sum_{k=1}^{\ell} \frac{p_k}{s_{m+k}}. \end{equation}
Since the sequence $\{f_n\}$ is non-decreasing, it follows that
\begin{equation}\label{eq:hgrowth6} \sum_{k=1}^{\ell} \frac{f_{m+k}}{s_{m+k}} \geq \sum_{k=1}^{\ell} \frac{f_k}{s_{m+k}}. \end{equation}
Inequalities~\eqref{eq:hgrowth4}~\eqref{eq:hgrowth5} and~\eqref{eq:hgrowth6} yield
\begin{equation}\label{eq:hgrowth7} \sum_{k=1}^{\ell} \frac{f_{m+k}}{s_{m+k}} \geq 1 .\end{equation}
Since $m$ was arbitrary and $\ell\geq 1$, inequality~\eqref{eq:hgrowth7} implies that the series $\sum_{k=1}^\infty \frac{f_k}{s_k}$  diverges.
\end{proof}

\begin{example}
Let $G$ be the infinite regular tree on which every vertex has degree $\delta+1$. Then $G$ has homogeneous growth since
\begin{equation} \frac{ \left |T^* \right|}{ |T|} = \frac{s_{n+1}}{s_n} = \delta.\end{equation}
\end{example}

\begin{definition}\label{def:Ld}
The $d$-dimensional square grid $\mathbb{L}^d$ is  the graph with vertex set
\begin{equation}V(\mathbb{L}^d)= \{(x_1,x_2, \ldots, x_d) \mid x_i \in \mathbb{Z} \}\end{equation}
and edge set
\begin{equation}E(\mathbb{L}^d)= \left\{(x,y)\in V\times V \;\middle|\; \sum_{i=1}^d |x_i-y_i| =1\right\}.\end{equation}
The \emph{positive orthant of $\mathbb L^d$} is the subgraph $\mathbb L^d_+$   expanded by the collection of vertices $(x_1,x_2, \ldots, x_d) $ for which $x_i\geq 0$ for every $i$. The vertex of $\mathbb L^d_+$  whose coordinates are all zero is called \emph{the origin}.
\end{definition}

\begin{remark}\label{rem:snLd}
For the {positive orthant $\mathbb L^d_+$}, the number of vertices at distance $m$ from the origin is given by ${m+d-1 \choose d-1}$ which is a polynomial of degree $d-1$ in $m$.  This is a standard counting argument since any  vertex $(x_1,x_2, \ldots, x_d)\in \mathbb{L}^d_+$ is a  sum $\sum_{i=1}^d x_i e_i$ and its distance to the origin is exactly the sum $\sum_{i=1}^d x_i$.  As consequence, the growth function of $\mathbb L^d_+$ (and hence of $\mathbb L^d$) with respect to the origin is  bounded from above by a polynomial of degree $d$.
\end{remark}

\begin{proposition}\label{prop:Ldsatisfies}
The graph $\mathbb{L}^d_+$ has homogeneous growth.
\end{proposition}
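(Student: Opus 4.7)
The plan is to verify the two inequalities in the definition of homogeneous growth with base vertex the origin $g_0$ and radius $r=0$. For $\mathbb{L}^d_+$, the sphere $S_n$ consists of all $(x_1,\ldots,x_d)$ of non-negative integers summing to $n$, so by Remark~\ref{rem:snLd} one has $s_n = \binom{n+d-1}{d-1}$ and hence
\[
\frac{s_{n+1}}{s_n} \;=\; \frac{n+d}{n+1} \;\geq\; 1.
\]
It therefore remains to prove the isoperimetric bound $|T^*|/|T|\geq (n+d)/(n+1)$ for every non-empty $T\subseteq S_n$ and every $n\geq 0$.

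My approach to the isoperimetric bound is a weighted double count on the bipartite graph $H$ between $S_n$ and $S_{n+1}$ in which $v\in S_n$ is adjacent to $w\in S_{n+1}$ precisely when $w=v+e_i$ for some standard basis vector $e_i$. I assign to the edge between $v=w-e_i$ and $w=(w_1,\ldots,w_d)$ the weight $w_i/(n+1)$. Then at each $w\in S_{n+1}$ the weights sum to
\[
\sum_{i:\,w_i>0}\frac{w_i}{n+1} \;=\; \frac{n+1}{n+1} \;=\; 1,
\]
while at each $v=(v_1,\ldots,v_d)\in S_n$, whose neighbors in $S_{n+1}$ are $v+e_1,\ldots,v+e_d$, the weights sum to
\[
\sum_{i=1}^d \frac{v_i+1}{n+1} \;=\; \frac{n+d}{n+1}.
\]

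To conclude, I will compute the total weight $W$ of the edges of $H$ between $T$ and $T^*$ in two ways. Since every neighbor in $S_{n+1}$ of a vertex $v\in T$ belongs to $T^*$ by the very definition of $T^*$, the $T$-side count gives $W = |T|\cdot(n+d)/(n+1)$. On the other hand, each $w\in T^*$ contributes at most its full total weight $1$ to $W$, yielding the upper bound $W\leq |T^*|$. Combining these, $|T^*|/|T|\geq (n+d)/(n+1) = s_{n+1}/s_n$, which gives homogeneous growth with $r=0$. The only conceptual step is the choice of weights $w_i/(n+1)$; the factor $n+1$ is dictated by the need to make the upper-sphere sums equal to $1$, and once that choice is made the two vertex sums collapse to the desired ratios, so I do not anticipate any serious obstacle.
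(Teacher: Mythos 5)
Your proof is correct, and it takes a genuinely different route from the paper's.

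The paper works with the translates $H_i = T + e_i$ of $T$ by each standard basis vector $e_i$, observes that $T^* = \bigcup_{i=1}^d H_i$ with each $|H_i| = |T|$, and then constructs an explicit injection from $H_1 \times \{e_2,\ldots,e_d\}$ into a set $P_2 \subseteq \bigl(\bigcup_{i\geq 2} H_i \setminus H_1\bigr) \times \{e_2,\ldots,e_d\} \times \{1,\ldots,m+1\}$ to bound the extra vertices picked up by the union; the injection is defined by tracking, for each $(x,e_i)$, the first $k$ for which $x-ke_1+ke_i$ leaves $H_1$. This works but requires some care in verifying injectivity and the membership constraint defining $P_2$. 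Your weighted double count on the bipartite layer graph sidesteps all of that: the single choice of edge weight $w_i/(n+1)$ simultaneously normalizes each upper vertex to mass exactly $1$ and gives each lower vertex mass exactly $(n+d)/(n+1) = s_{n+1}/s_n$, after which the inequality $|T|\cdot s_{n+1}/s_n \le |T^*|$ is an immediate comparison of two tallies of the same set of edge weights. This is shorter, more transparent, and makes visible why the exact ratio $s_{n+1}/s_n$ appears as the isoperimetric constant. The one step that deserves a sentence of emphasis is the $T^*$-side upper bound: each $w\in T^*$ contributes only the weights of edges into $T$, which is a subset of its total mass $1$, so the total is at most $|T^*|$. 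You state this correctly. Either approach suffices; yours is the cleaner of the two.
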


\begin{proof}  Let $T$ be a subset of the sphere $S_m \subset \mathbb L^d_+$ of radius $m$ centered at the origin. Denote by $N(T)$ the set $B_{\mathbb L^d_0} (T, 1)\cap S_{m+1}$.  Let $e_i$ be the vertex of $\mathbb L^d_0$ whose $i$-th coordinate is one, and any other coordinate is zero. Any   vertex $(x_1,x_2, \ldots, x_d)\in \mathbb{L}^d_+$ is a  sum $\sum_{i=1}^d x_i e_i$.  From the remark above, we have that
\begin{equation} \frac{s_{m+1}}{s_m} = 1+\frac{d-1}{m+1}.\end{equation}
Let $H_{i}=T+e_i$ and observe that $|H_i|=|T|$ and $N(T)= \bigcup_{i=1}^d H_i$.
Observe that verifying
\begin{equation} \left|\bigcup_{i=1}^d H_i \right| \geq \left(1+\frac{d-1}{m+1}\right) |H_1|,\end{equation}
proves that $\mathbb{L}^d_+$ has homogeneous growth. Therefore, it is sufficient to prove that \begin{equation} (m+1)\left|\bigcup_{i=2}^d H_i \cap H_1^c\right| \geq (d-1)|H_1|.\end{equation}
Letting $P_1 = H_1 \times \{e_2, \ldots e_d\}$, it is clear that $|P_1| = (d-1)|H_1|$.   Additionally consider $P_2 \subset \bigcup_{i=2}^d H_i \cap H_1^c \times \{e_2, \ldots, e_d\} \times \{1, \ldots, m+1\}$, where $(y,e_j,k) \in P_2$ only if $y-ke_j+ke_1 \in S_{m+1}$.  Now since $y=\sum_{i=1}^d y_i e_i\in S_{m+1}$ we have $\sum_{i=2}^d y_i \leq m+1$, hence there are at most $m+1$ elements  $(y,e_j,k) \in P_2$ for each $y$, which implies $|P_2|\leq (m+1)|\bigcup_{i=2}^d H_i \cap H_1^c|$.
Now for any $(x, e_i)\in P_1$ we can let $b=\min\{k \mid x-ke_1+ke_i \notin H_1\}$, noting that since $x-(b-1)e_1+(b-1)e_i\in H_1$ we have $x-be_1+be_i \in H_i$.  Then $(x-be_1+be_i, e_i, b) \in P_2$ as $x-be_1+be_i \in H_i \setminus H_1$  and $x-be_1+be_i-be_i+be_1=x \in H_1 \subset S_{m+1}$.  We have constructed a map from $P_1$ to $P_2$,  $$(x,e_i) \mapsto (x-be_1+be_i, e_i, b).$$ As no two distinct elements of $P_1$ can be mapped to the same element of $P_2$, this map is an injection and therefore $|P_1|\leq |P_2|$, demonstrating the inequality.
\end{proof}

\begin{corollary}\label{cor:conj} Let $d$ and $q$ be positive integers.
If $\lim_{n\to \infty} \frac{n^q}{n^{d-2}}=0$, then $\mathbb{L}^d$ does not satisfy the $O(n^q)$-containment property. In particular, $\mathbb{L}^d$ does not satisfy the $O(n^{d-3})$-containment property.
\end{corollary}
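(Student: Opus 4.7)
The plan is to deduce the corollary by combining three earlier results: Proposition~\ref{prop:subgraph} (subgraphs inherit containment properties), Proposition~\ref{prop:Ldsatisfies} (the positive orthant has homogeneous growth), and Theorem~\ref{thm:hgrowth} (the divergence condition on $\sum f_n/s_n$). The whole argument runs by contradiction inside the positive orthant $\mathbb{L}^d_+$, where we have precise control of sphere sizes via Remark~\ref{rem:snLd}.

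First, I would suppose that $\mathbb{L}^d$ satisfies the $O(n^q)$-containment property, so that there is a constant $c \geq 1$ with the $\{cn^q\}$-containment property. Since $\mathbb{L}^d_+$ is a subgraph of $\mathbb{L}^d$, Proposition~\ref{prop:subgraph} transfers the $\{cn^q\}$-containment property down to $\mathbb{L}^d_+$. The sequence $f_n = cn^q$ is positive and non-decreasing because $c \geq 1$ and $q \geq 1$, so the hypotheses of Theorem~\ref{thm:hgrowth} are met once homogeneous growth is available; this is exactly the content of Proposition~\ref{prop:Ldsatisfies}, applied with respect to the origin.

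Next I would invoke Theorem~\ref{thm:hgrowth} to conclude that the series
\[ \sum_{n=1}^\infty \frac{cn^q}{s_n} \]
diverges, where $s_n$ denotes the cardinality of the sphere of radius $n$ about the origin in $\mathbb{L}^d_+$. By Remark~\ref{rem:snLd}, $s_n = \binom{n+d-1}{d-1}$ is a polynomial in $n$ of degree $d-1$, so there is a constant $C>0$ with $s_n \geq C n^{d-1}$ for $n$ sufficiently large. The hypothesis $\lim_{n\to\infty} n^q/n^{d-2}=0$ together with $q$ being an integer forces $q \leq d-3$, hence $d-1-q \geq 2$, and consequently
\[ \sum_{n=1}^\infty \frac{cn^q}{s_n} \leq \frac{c}{C}\sum_{n \text{ large}} \frac{1}{n^{d-1-q}} < \infty. \]
This contradicts the divergence coming from Theorem~\ref{thm:hgrowth}, so the initial assumption fails. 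The particular statement about $O(n^{d-3})$-containment follows by taking $q = d-3$, since $n^{d-3}/n^{d-2} = 1/n \to 0$.

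There is no genuine obstacle here; the only thing to verify carefully is that $\{cn^q\}$ is indeed non-decreasing (immediate from $c\geq 1$, $q\geq 1$) and that the strict inequality $d-1-q \geq 2$ is correctly extracted from the limit hypothesis in the positive-integer regime. Everything else is a direct application of the machinery already developed.
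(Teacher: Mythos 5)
Your proposal is correct and takes essentially the same route as the paper: both reduce to the positive orthant via Proposition~\ref{prop:subgraph}, invoke Proposition~\ref{prop:Ldsatisfies} for homogeneous growth, and then apply Theorem~\ref{thm:hgrowth} together with the degree-$(d-1)$ estimate on $s_n$ from Remark~\ref{rem:snLd} to derive a convergent series, contradicting the required divergence. The only cosmetic difference is that you spell out the inequality $q \le d-3$ in slightly more detail than the paper, which simply states $d-2-q \ge 1$.
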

\begin{proof} 
Since containment properties are subgraph-hereditary, Proposition~\ref{prop:subgraph},  it is enough to argue for the orthant $\mathbb{L}^d_+$.
The limit assumption implies that $d-2-q\geq 1$ and therefore the series $\sum \frac{n^q}{n^{d-1}}$ converges.  For $\mathbb{L}^d_+$, the sequence $s_n$ is a polynomial of degree $d-1$ in $n$, see Remark~\ref{rem:snLd}.
Since $\mathbb{L}^d_+$ has homogeneous growth, Proposition~\ref{prop:Ldsatisfies},   we have that  Theorem~\ref{thmi:homogeneous} implies that $\mathbb{L}^d_+$  does not satisfy the $O(n^q)$-containment property.   \end{proof}

\section{Quasi-isometry invariance of the Containment Property}\label{sec:qi}

 Definition~\ref{def:qi} can be re-stated as described in the following remark.

\begin{remark} \cite[Page 138, Exercise 8.16]{BrHa99} \label{rem:qidef} The graphs $G$ and $H$  are  quasi-isometric if there is an  integer $c\geq 1$ and functions functions between their vertex sets
$\phi \colon G \to H$ and $\psi \colon H \to G$ such that  for every pair of vertices $g_1, g_2 \in G$,
\begin{equation}   \dist_H (\phi g_1, \phi g_2 )  \leq c\dist_G (g_1, g_2)  + c,\end{equation}
for every pair of vertices $h_1,h_2 \in H$
\begin{equation}  \dist_G (\psi h_1, \psi h_2 )  \leq c \dist_H (h_1, h_2) + c,\end{equation}
 for every $g\in G$
\begin{equation} \dist_G (g, \psi \phi g)\leq c,\end{equation}
and for every $h \in H$
\begin{equation} \dist_H (h,  \phi \psi h)\leq c.\end{equation}
In this case, the pair $(\phi, \psi)$ is called a $c$-quasi-isometry from $G$ to $H$.
\end{remark}

\begin{definition}
Given  non-decreasing sequences $f\colon \N\to \N$ and $g\colon \N\to \N$. The relation $f \preccurlyeq g$ is defined as the existence of an integer $c>0$ such that $f(n) \leq cg(cn+c)+c$ for every $n\geq c$. If $f \preccurlyeq g \preccurlyeq f$ then we say that $f$ and $g$ have \emph{equivalent asymptotic growth} and write $f \sim g$.
\end{definition}

\begin{remark}
Suppose that $\{a_n\}$ is asymptotic equivalent to $\{b_n\}$. If $\{a_n\}$ is $O(n^d)$,   then $\{b_n\}$ is $O(n^d)$.
\end{remark}

\begin{theorem} \label{thm:qi}
Let $G$ and $H$ be graphs with bounded degree. Suppose that $G$ is quasi-isometric to $H$, and let $\{f_n\}$ be a non-decreasing sequence of integers.
If $G$ satisfies the $\{f_n\}$-containment property then  $H$ satisfies the $\{b_n\}$-containment property where $\{b_n\}$ is a sequence with asymptotic growth equivalent to  $\{f_n\}$.
\end{theorem}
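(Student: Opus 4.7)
Let $(\phi, \psi)$ be a $c$-quasi-isometry from $G$ to $H$ as in Remark~\ref{rem:qidef}. The plan is to transfer a containment strategy from $G$ to $H$: given an initial $H$-fire $X_0$, pull it back through $\psi$, play the firefighter game in $G$ with appropriately scaled fire spread, and push the protected vertex sets forward through $\phi$, enlarged into balls of a fixed constant radius $R$ in $H$.

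The crucial quantitative fact is that $\psi$ sends $H$-adjacent vertices to $G$-vertices at distance at most $2c$. Accordingly, I first invoke Proposition~\ref{scalingTurns} to upgrade the $\{f_n\}$-containment property of $G$ to the $(\{g_n\},2c)$-containment property, where $g_n\leq 2c\, f_{2cn}$ since $\{f_n\}$ is non-decreasing. Let $\{W_n\}$ be a $(\{g_n\},2c)$-containment strategy in $G$ for the initial fire $Y_0:=\psi(X_0)$, and define the $H$-strategy
\[
U_n := B_H\!\bigl(\phi(W_n),\, R\bigr), \qquad R := 2c^2 + 2c.
\]
Since $H$ has bounded degree, the balls $B_H(v,R)$ have cardinality bounded by a constant $\mu$ depending only on $R$ and the degree bound, so $|U_n|\leq \mu|W_n|\leq 2c\mu\, f_{2cn}=:b_n$, and $\{b_n\}$ is readily checked to be asymptotically equivalent to $\{f_n\}$ in the sense defined before the theorem.

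The heart of the argument is the inductive claim $\psi(A_n)\subseteq Y_n$, where $A_n$ is the $H$-fire produced by $\{U_n\}$ and $Y_n$ is the $G$-fire in the $(\{g_n\},2c)$-game. For the inductive step, a vertex $h\in A_n\setminus A_{n-1}$ has an $H$-neighbor $h'\in A_{n-1}$ with $h\notin U_1\cup\cdots\cup U_n$, and $\psi(h'),\psi(h)$ lie at $G$-distance at most $2c$. If some vertex $v$ on a shortest $G$-path joining them belonged to $W_m$ for some $m\leq n$, then combining $\dist_H(h,\phi\psi(h))\leq c$ with $\dist_H(\phi\psi(h),\phi(v))\leq c\cdot\dist_G(\psi(h),v)+c\leq 2c^2+c$ would yield $\dist_H(h,\phi(v))\leq R$, placing $h$ in $U_m$---a contradiction. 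Hence the path avoids $W_1\cup\cdots\cup W_n$, and $\psi(h)\in Y_n$.

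With the claim established, containment in $H$ follows: the $G$-fire stabilizes at a finite set $Y_\infty$; the fibers of $\psi$ have $H$-diameter at most $3c$ (two applications of $\dist_H(h,\phi\psi h)\leq c$ together with the Lipschitz bound on $\phi$), hence bounded cardinality; so $\psi^{-1}(Y_\infty)$ is finite, and the non-decreasing sequence $\{A_n\}\subseteq\psi^{-1}(Y_\infty)$ stabilizes as required by Proposition~\ref{prop:locfin}. I expect the main obstacle to be the careful bookkeeping of the constants $c$, $R$, and $\mu$ in the inductive step, so that $R$ is large enough to force the contradiction $h\in U_m$ and $\mu$ absorbs the blow-up of $|W_n|$ into $|U_n|$ without leaving the equivalence class of $\{f_n\}$.
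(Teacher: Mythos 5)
Your overall architecture is the same as the paper's: convert to the $(\cdot,2c)$-spread game on $G$ via Proposition~\ref{scalingTurns}, map the fire into $G$, push each $W_n$ forward through $\phi$ and thicken by a fixed radius $R$ that absorbs the quasi-isometry distortion, bound $|U_n|$ by a ball cardinality (here is where bounded degree enters), and prove an inductive fire-comparison statement. You streamline in a way the paper does not: you pull back the initial fire directly as $\psi(X_0)$ rather than as a large ball $B_G(\psi h_0, 2c(q+2))$, which lets you state your comparison as $\psi(A_n)\subseteq Y_n$ without the one-step shift in the paper's Lemma~\ref{lem:qi}. Your ``last-edge'' induction is correspondingly simpler than the paper's traversal of the whole $\gamma$-path and its case analysis on the minimal time $t$ with $g\in W_t$, at the mild cost of a larger radius ($R=2c^2+2c$ versus the paper's $r=c^2+2c$); this only changes the constant $\mu$ and is harmless.

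There is one genuine gap: as defined, $U_n = B_H(\phi(W_n),R)$ is not guaranteed to be disjoint from $A_{n-1}$, so $\{U_n\}$ is not literally an admissible sequence of protected sets in the sense of Definition~\ref{def:firegame2}(2). The paper anticipates exactly this by setting $Q_k = \bigcup_{g\in W_k} B_H(\phi g, r)\setminus Y_{k-1}$. You should do the same, replacing $U_n$ with $U_n' := U_n\setminus A_{n-1}$, and then check that the contradiction in your inductive step survives: given $h\in A_n\setminus A_{n-1}$ with $h\notin U_1'\cup\cdots\cup U_n'$ and a path vertex $v\in W_m$ ($m\le n$), you do get $h\in U_m$, and since $h\notin A_{n-1}\supseteq A_{m-1}$ you in fact get $h\in U_m'$, which is the contradiction you need. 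Once that repair is made explicit (and, equally for the base of the induction, once $A_{n-1}\cap U_m'=\emptyset$ for $m\le n$ is noted, which follows from the game's own monotonicity), the proof is sound, and your stabilization argument via the bounded-diameter fibers of $\psi$ is a correct and compact way to close.
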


The basic strategy of the proof is to start with an arbitrary fire on $H$ and then construct a corresponding fire on $G$.  Then the containment strategy in $G$ is transformed into a containment strategy in $H$ using the quasi-isometry between the graphs. A description of this process is illustrated  in Figure~\ref{fig:quasicontain}.

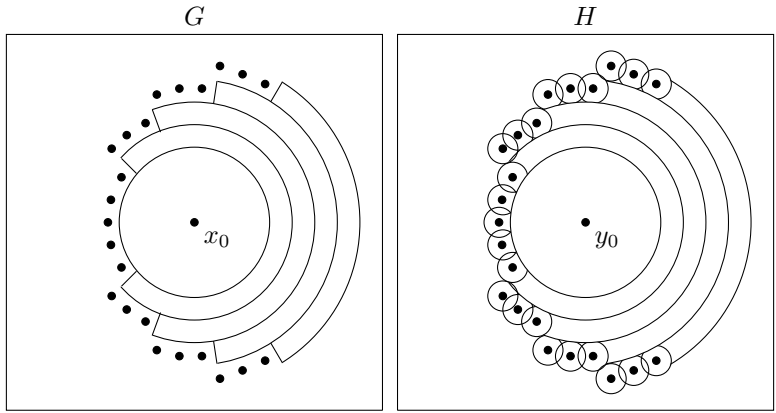
\begin{figure}
\center

\begin{tikzpicture}

\draw (0.1,0)--(5.1, 0)--(5.1,5)--(0.1,5)--(0.1,0);
\draw (-0.1,0)--(-5.1, 0)--(-5.1,5)--(-0.1,5)--(-0.1,0);

\draw (2.6,2.5) [fill=white] circle [radius=2.2];
\draw (-2.6,2.5) [fill=white] circle [radius=2.2];
\draw [white,fill=white] (0.3,0.2) rectangle (3.72,4.8);
\draw [white,fill=white] (-4.9,0.2) rectangle (-1.44,4.8);
\draw (-1.43, 0.63) -- (-1.6, 0.92);
\draw (-1.43, 4.37) -- (-1.6, 4.08);

\draw (-2.26,4.58) [fill] circle [radius=0.05];
\draw (-2.26,.42) [fill] circle [radius=0.05];
\draw (3.24,4.47) circle [radius=0.2];
\draw (3.24,4.47) [fill] circle [radius=0.05];
\draw (3.54,4.34) circle [radius=0.2];
\draw (3.54,4.34) [fill] circle [radius=0.05];
\draw (3.24,.53) circle [radius=0.2];
\draw (3.24,.53) [fill] circle [radius=0.05];
\draw (3.54,.66) circle [radius=0.2];
\draw (3.54,.66) [fill] circle [radius=0.05];
\draw (-1.66,.66) [fill] circle [radius=0.05];
\draw (-1.66,4.34) [fill] circle [radius=0.05];
\draw (-1.96,.53) [fill] circle [radius=0.05];
\draw (-1.96,4.47) [fill] circle [radius=0.05];

\draw (2.6,2.5) [fill=white] circle [radius=1.9];
\draw (-2.6,2.5) [fill=white] circle [radius=1.9];
\draw [white,fill=white] (0.4,0.5) rectangle (2.87,4.4);
\draw [white,fill=white] (-4.8,0.5) rectangle (-2.3,4.5);
\draw (-2.3, 0.62) -- (-2.35, 0.95);
\draw (-2.3, 4.38) -- (-2.35, 4.05);

\draw (2.94,4.58) [fill] circle [radius=0.05];
\draw (2.94,4.58) circle [radius=0.2];
\draw (2.94,.42) [fill] circle [radius=0.05];
\draw (2.94,.42) circle [radius=0.2];

\draw [fill] (-3.1,4.2) circle [radius=0.05];
\draw [fill] (-3.1,0.8) circle [radius=0.05];
\draw [fill] (-2.8,4.28) circle [radius=0.05];
\draw [fill] (-2.8,0.72) circle [radius=0.05];
\draw [fill] (-2.5,4.28) circle [radius=0.05];
\draw [fill] (-2.5,0.72) circle [radius=0.05];
\draw [fill] (2.1,4.2) circle [radius=0.05];
\draw [fill] (2.1,0.8) circle [radius=0.05];
\draw [fill] (2.4,4.28) circle [radius=0.05];
\draw [fill] (2.4,0.72) circle [radius=0.05];
\draw [fill] (2.7,4.28) circle [radius=0.05];
\draw [fill] (2.7,0.72) circle [radius=0.05];
\draw (2.1,4.2) circle [radius=0.2];
\draw (2.1,0.8) circle [radius=0.2];
\draw (2.4,4.28) circle [radius=0.2];
\draw (2.4,0.72) circle [radius=0.2];
\draw (2.7,4.28) circle [radius=0.2];
\draw (2.7,0.72) circle [radius=0.2];

\draw (2.6,2.5) [fill=white] circle [radius=1.6];
\draw (-2.6,2.5) [fill=white] circle [radius=1.6];
\draw [white,fill=white] (0.8,1) rectangle (2.03,4);
\draw [white,fill=white] (-4.4,1) rectangle (-3.17,4);
\draw (-3.16, 4)--(-3.05, 3.70);
\draw (-3.16, 1)--(-3.05, 1.30);

\draw [fill] (-3.7,3.48) circle [radius=0.05];
\draw [fill] (-3.7,1.52) circle [radius=0.05];
\draw [fill] (1.5,3.48) circle [radius=0.05];
\draw [fill] (1.5,1.52) circle [radius=0.05];
\draw [fill] (-3.5,3.66) circle [radius=0.05];
\draw [fill] (-3.5,1.34) circle [radius=0.05];
\draw [fill] (1.7,3.66) circle [radius=0.05];
\draw [fill] (1.7,1.34) circle [radius=0.05];
\draw [fill] (-3.25,3.82) circle [radius=0.05];
\draw [fill] (-3.25,1.18) circle [radius=0.05];
\draw [fill] (1.95,3.82) circle [radius=0.05];
\draw [fill] (1.95,1.18) circle [radius=0.05];
\draw (1.7,3.66) circle [radius=0.2];
\draw (1.7,1.34) circle [radius=0.2];
\draw (1.95,3.82) circle [radius=0.2];
\draw (1.95,1.18) circle [radius=0.2];

\draw (2.6,2.5) [fill=white] circle [radius=1.3];
\draw (-2.6,2.5)  circle [radius=1.3];
\draw [white,fill=white] (1.25,1.7) rectangle (1.6,3.3);
\draw [white,fill=white] (-4,1.65) rectangle (-3.25,3.35);
\draw (-3.57,3.35) -- (-3.37, 3.15);
\draw (-3.57,1.65) -- (-3.37, 1.85);
\draw (1.5,3.48) circle [radius=0.2];
\draw (1.5,1.52) circle [radius=0.2];

\draw (2.6,5) node[above] {$H$};
\draw (-2.6,5) node[above] {$G$};
\draw [fill] (-3.75,2.5) circle [radius=0.05];
\draw [fill] (-3.71,2.2) circle [radius=0.05];
\draw [fill] (-3.71,2.8) circle [radius=0.05];
\draw [fill] (-3.57,3.1) circle [radius=0.05];
\draw [fill] (-3.57,1.9) circle [radius=0.05];
\draw [fill] (1.45,2.5) circle [radius=0.05];
\draw [fill] (1.49,2.2) circle [radius=0.05];
\draw [fill] (1.49,2.8) circle [radius=0.05];
\draw [fill] (1.63,3.1) circle [radius=0.05];
\draw [fill] (1.63,1.9) circle [radius=0.05];
\draw (1.45,2.5) circle [radius=0.2];
\draw (1.5,2.2) circle [radius=0.2];
\draw (1.5,2.8) circle [radius=0.2];
\draw (1.63,1.9) circle [radius=0.2];
\draw (1.63,3.1) circle [radius=0.2];

\draw (2.6,2.5) [fill=white] circle [radius=1];
\draw (-2.6,2.5) [fill=white] circle [radius=1];
\draw [fill] (2.6,2.5) node[below right] {$y_0$} circle [radius=0.05];
\draw [fill] (-2.6,2.5) node[below right] {$x_0$} circle [radius=0.05];

\end{tikzpicture}
\caption{A containment strategy on $G$ and translated to $H$.}
\label{fig:quasicontain}
\end{figure}

\subsection{Proof of Theorem~\ref{thm:qi}}

Let $G$ and $H$ be graphs with the property that every vertex has degree bounded above by $\delta>0$. Let $(\phi, \psi)$ be a $c$-quasi-isometry from $G$ to $H$ as described in Remark~\ref{rem:qidef}.

In view of Proposition~\ref{scalingTurns}, we  assume that $G$ has the $(\{a_n\},2c)$-containment property where $a_{n+1}=\sum_{i=1}^{2c} f_{2cn+i}$.  Since $f_n$ is a non-decreasing sequence,
\begin{equation}
 f_{2cn+1} \leq a_{n+1} \leq 2c f_{2c(n+1)}
\end{equation}
and hence  the sequences $a_n$ and $f_n$ have equivalent asymptotic growth.

We prove below that  $H$ has the $(\{b_n\}, 1)$-containment property where
\begin{equation} b_n = a_n\cdot \delta^{r+1}  , \quad \quad r=c^2+2c .\end{equation}
It is immediate that the sequences $a_n$ and $b_n$ have equivalent asymptotic growth,  the same equivalence holds for the sequences $f_n$ and $b_n$.

Given a vertex $h_0\in H$ and $q\geq 0$, a $(\{b_n\},1)$-containment strategy $\{Q_k\colon k\geq 1\}$ for \begin{equation}Y_0=B_H(h_0, q) \end{equation} is obtained as follows.
Let $g_0=\psi h_0$, and  let $W_1, W_2, \ldots$ be a $(\{a_n\}, 2c)$-containment strategy for \begin{equation} X_0=B_G\left(g_0, 2c(q+2)\right) .\end{equation}
For $k\geq 1$, define recursively $Q_k$ and $Y_k$ as
\begin{equation}Q_k = \bigcup_{ g \in W_k}   B_H (\phi g, r)  \setminus Y_{k-1}\end{equation}
and
\begin{equation}Y_{k+1}=  B_H(Y_k, 1) \setminus Q_{k+1} .\end{equation}

The proof of Theorem~\ref{thm:qi} reduces to prove that $\{Q_k\colon k\geq 1\}$ is a $(\{b_n\},1)$-containment strategy for the initial fire $Y_0$. This entails verifying the three statements of Definition~\ref{def:firegame2}.  The first two statements are immediate:

\subsubsection{} For every $k$, the set $Q_k$ has cardinality at most $b_k$. Indeed, since each $W_k$ has cardinality bounded by $a_n$, and a ball in $H$ of radius $r$ about any vertex has cardinality bounded by $\delta^{r+1}$, it follows that   $|Q_k|\leq a_n\cdot \delta^{r+1}=b_n$, proving the first statement.

\subsubsection{} The sets $Y_k$ and $Q_{k+1}$ are disjoint by construction. Moreover, an easy induction argument shows that a vertex $h$ belongs to $Y_k$ if and only if there is a path $h_0, h_1, h_2, \ldots , h_\ell=h$ such that no $h_i$ is in $Q_1\cup \cdots \cup Q_k$, and $\ell \leq q+k$.

\subsubsection{} It is left to verify the third statement of Definition~\ref{def:firegame2} for  $\{Q_k\colon k\geq 1\}$. This part is where most of the work in the proof is.

Let $X_{k}$  consist  of the vertices of $G$ which are on fire at time $k$ given the initial fire $X_0$ and the strategy $\{W_n \colon n\geq 1\}$. Equivalently, $X_k$ consists of the vertices of $G$ that
 are connected to a vertex in $X_{k-1}$ by a path of length at most $2c$ containing no vertices in $W_1\cup \cdots \cup W_k$.

 Define
\begin{equation} r_k = 2c(q+k+2).\end{equation}
Observe that  $X_k$ consists of vertices $g \in G$ such that there is a path from $g_0$ to $g$ of length at most $r_k$ that does not contain vertices in $W_1\cup \cdots \cup W_k$. In particular,
\begin{equation} X_k \subseteq B_G(g_0, r_k)\end{equation}
for  all $k\geq 0$.

\begin{lemma}\label{lem:qi}
For every $k$, if $h \in Y_k$ then $\psi h \in X_{k-1}$.
\end{lemma}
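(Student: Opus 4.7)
The plan is an induction on $k$, using the convention $X_{-1}:=X_0$ so that the base case $k=0$ reduces to the quasi-isometry estimate $\dist_G(\psi h,g_0)\leq cq+c\leq 2c(q+2)$ valid for $h\in Y_0=B_H(h_0,q)$. For the inductive step, assume $\psi h'\in X_{k-1}$ for every $h'\in Y_k$ and fix $h\in Y_{k+1}=B_H(Y_k,1)\setminus Q_{k+1}$. If $h\in Y_k$ then the induction hypothesis combined with $X_{k-1}\subseteq X_k$ immediately gives $\psi h\in X_k$. Otherwise $h$ is adjacent in $H$ to some $h'\in Y_k$, so the hypothesis gives $\psi h'\in X_{k-1}$, and the quasi-isometry yields $\dist_G(\psi h',\psi h)\leq c\cdot 1+c=2c$; fix a path $\gamma$ in $G$ of length at most $2c$ from $\psi h'$ to $\psi h$.

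The bulk of the argument is showing that $\gamma$ can be taken to avoid $W_1\cup\cdots\cup W_k$, for then $\psi h\in X_k$ by the definition of the $(\{a_n\},2c)$-game. Suppose instead that some $g\in\gamma$ lies in $W_j$ for some $j\leq k$. Since $g$ is on a length-$2c$ path ending at $\psi h$, $\dist_G(g,\psi h)\leq 2c$, and two successive applications of the quasi-isometry together with the composition bound $\dist_H(\phi\psi h,h)\leq c$ give
\[
\dist_H(\phi g,h)\;\leq\;\dist_H(\phi g,\phi\psi h)+\dist_H(\phi\psi h,h)\;\leq\;(c\cdot 2c+c)+c\;\leq\;r,
\]
provided $r$ is the quadratic-in-$c$ radius fixed above the lemma. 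Hence $h\in B_H(\phi g,r)$, and by the definition $Q_j=\bigcup_{g\in W_j}B_H(\phi g,r)\setminus Y_{j-1}$ we conclude that either $h\in Y_{j-1}$ or $h\in Q_j$. The first possibility contradicts our Case 2 assumption $h\notin Y_k$, via the monotonicity $Y_{j-1}\subseteq Y_k$, which follows because $Q_n\cap Y_{n-1}=\emptyset$ by construction. The second contradicts $h\in Y_{k+1}$, via the firefighter rule that a once-protected vertex never re-enters the fire; this convention is implicit in Definition~\ref{def:firegame2} and amounts to reading the recursion as $Y_{n+1}=B_H(Y_n,1)\setminus(Q_1\cup\cdots\cup Q_{n+1})$. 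Either way we reach a contradiction, so no such $g$ exists and $\psi h\in X_k$.

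The main obstacle is the distance bookkeeping in the displayed inequality. Each transfer across the quasi-isometry scales by $c$ and adds $c$, and the composition bound $\dist_H(\phi\psi h,h)\leq c$ contributes one more $c$, so $r$ must be allowed to grow quadratically in $c$; any weaker bound would leave $h$ outside the protected ball $B_H(\phi g,r)$ and the whole contradiction would collapse. A secondary technical subtlety is verifying the monotonicity of $\{Y_n\}$ and the fact that previously protected vertices are permanently barred from the fire; both follow cleanly from $Q_n\cap Y_{n-1}=\emptyset$ together with the standard game convention, and they are what justify treating the one-step recursion $Y_{n+1}=B_H(Y_n,1)\setminus Q_{n+1}$ as if it excluded all earlier $Q_j$ as well.
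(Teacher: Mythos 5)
Your proposal is essentially correct and, in my reading, takes a genuinely different route from the paper, with one quantitative slip that is fixable and that you partly notice.

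The paper lifts the \emph{entire} $H$-path $h_0,h_1,\ldots,h_\ell=h$ to a concatenated $G$-path $\gamma$, and if some $g$ on $\gamma$ lies in $W_1\cup\cdots\cup W_{k-1}$ it finds the \emph{nearest} lifted vertex $\psi h_j$, which is within distance $c$ of $g$; after introducing $t$ as the least index with $g\in W_t$ and deducing $h_j\in Y_{t-1}$, it applies a strong induction hypothesis to get $\psi h_j\in X_{t-2}$, concludes $g\in X_{t-1}$, and derives the contradiction $g\notin W_t$. You instead do a single-step induction $Y_k\to Y_{k+1}$, lift one edge $h'\to h$ to a path $\gamma$ of length at most $2c$, and if some $g\in\gamma$ lies in $W_j$ you compare $g$ to the far endpoint $\psi h$ and obtain the dichotomy $h\in Q_j$ or $h\in Y_{j-1}$, each of which contradicts the standing case hypothesis directly (the first contradicts $h\in Y_{k+1}$, the second contradicts $h\notin Y_k$ via monotonicity of the $Y_n$). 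This avoids the least-index bookkeeping, the second application of the induction hypothesis, and the need for strong induction; it is cleaner. The price is that your distance estimate is $\dist_H(\phi g,h)\leq c\cdot 2c+c+c=2c^2+2c$, because you measure $\dist_G(g,\psi h)\leq 2c$ rather than the paper's $\dist_G(g,\psi h_j)\leq c$; for $c\geq 1$ this strictly exceeds the paper's $r=c^2+2c$, so the clause ``provided $r$ is the quadratic-in-$c$ radius fixed above the lemma'' is not quite right. The clean fix is to define $r=2c^2+2c$ (or anything larger): this only changes the numerical constant in $b_n=a_n\,\delta^{r+1}$ and leaves the asymptotic growth class untouched, so the theorem is unaffected. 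Your observation that the recursion must be read as $Y_{n+1}=B_H(Y_n,1)\setminus(Q_1\cup\cdots\cup Q_{n+1})$ --- i.e., protected vertices never re-enter the fire --- is correct and in fact is also what the paper implicitly uses via its path characterization of $Y_k$; without it, neither the paper's argument nor yours would close.
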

\begin{proof}[Proof of Lemma~\ref{lem:qi}]
We argue by induction on $k$.
If $h \in Y_1$  then $\dist_H (h_0, h)\leq q+1$ and hence
\begin{equation}\dist_G(g_0, \psi h) \leq c(q+1)+c \leq 2c(q+2).\end{equation}
It follows that $\psi h$ belongs to $X_0=B_G(x_0, r_0)$.

Assume inductively that $h \in Y_j$ implies $\psi h \in X_{j-1}$ for all $j<k$ and $2\leq k$.
Suppose $h\in Y_k$. Then there exists a path
\begin{equation}h_0, h_1, h_2, \ldots , h_\ell=h\end{equation}
such that $\ell \leq q+k$ and no $h_i$ is in $Q_1\cup \cdots \cup Q_k$ (since it is a path, $\dist_H (h_i, h_{i+1})=1$ for $i<\ell$). Consider the sequence of vertices
\begin{equation}\psi h_0, \psi h_1, \psi h_2, \ldots , \psi h_\ell.\end{equation}
Since $\dist_G (\psi h_{i-1} , \psi h_i)\leq  c\dist_H (h_i, h_{i+1}) + c =2c$,  there is a path $\gamma_i$ of length at most $2c$ from $\psi h_{i-1}$ to $\psi h_i$. Consider the path $\gamma$ from $\psi h_0$ to $\psi h$ resulting from the concatenation $\gamma_1 \cdots \gamma_\ell$. Observe that the length of $\gamma$ is at most $2c\ell \leq 2c(q+k) \leq r_{k-1}$. We prove below that  that no vertex of $\gamma$ is in the set $W_1\cup \cdots \cup W_{k-1}$, which implies that $\psi h \in X_{k-1}$ completing the proof.

Suppose there are vertices of $\gamma$ in $W_1\cup \cdots \cup W_{k-1}$. By construction,  each vertex of $\gamma$  is at distance at most $c$ from a vertex  of the form $\psi h_i \in \gamma$. It follows that we can choose a vertex $g$ of $\gamma$  and a vertex of the form $\psi h_j$  of $\gamma$ (they might be the same vertex) with the following  properties:  first \begin{equation}g \in  W_1\cup \cdots \cup W_{k-1}\end{equation} and second, the subpath of $\gamma$ between $g$ and $\psi h_j$ has length at most $c$ and it has only one vertex in  $W_1\cup \cdots \cup W_{k-1}$, namely $g$.
{Let \begin{equation}t\leq k-1\end{equation} be the least integer such that \begin{equation}g\in W_t.\end{equation}} Since
\begin{equation}\dist_G(\phi g, h_j) \leq  \dist_H ( \phi g, \phi \psi h_j) + \dist_H (\phi \psi h_j, h_j) \leq c^2+2c  = r,\end{equation}
it follows  that either $h_j \in Q_t$ or $h_j \in Y_{t-1}$. The former case is impossible by the assumption on the path from $h_0$ to $h$. Therefore $h_j \in Y_{t-1}$ and then the induction hypothesis implies that $\psi h_j \in X_{t-2}$.
Since the subpath of $\gamma$ between $\psi h_j$ and $g$ has no vertices in $W_1\cup \cdots \cup W_{t-1}$ and $\psi h_j \in X_{t-2}$, it follows that $g\in X_{t-1}$. This implies that {\begin{equation}g\notin W_t\end{equation}} which is a contradiction. This completes the proof of the lemma.
\end{proof}

Since  $\{W_k\colon k\geq 1\}$ is an $(\{a_n\},2c)$-containment strategy for $X_0$ in $G$, there is $M\geq 0$ such that $X_n = X_M$  for every $n\geq M$.  Then Lemma~\ref{lem:qi}  implies that  $\psi Y_n \subseteq X_M \subseteq B_G(g_0, r_M)$ for $n>M$.  It follows that
\begin{equation} Y_n \subseteq B_H(\phi \psi Y_n, c) \subseteq \phi B_G(g_0, r_M+c) \subseteq B_H(h_0, cr_M+c^2+c)\end{equation}
for every $n$. Since $H$ is a locally finite graph, Proposition~\ref{prop:locfin} implies that there is $N\geq 0$ such that
\begin{equation} Y_n =Y_N \end{equation}
for every $n\geq N$.

We have shown that $\{Q_k\colon k\geq 1\}$ is a $(\{b_n\},1)$-containment strategy for $B_H(h_0, q)$ in $H$. Since $h_0\in H$  and $q\geq 0$ were arbitrary, Remark~\ref{rem:reduction} implies that $H$ has the $(\{b_n\},1)$-containment property concluding the proof of Theorem~\ref{thm:qi}. \hfill \qedsymbol

\begin{remark}[Remark on the proof of Theorem~\ref{thm:qi} and the time to contain fires]
When a graph satisfies a containment property, there is the associated measure of how long it takes to efficiently contain fires.  
Specifically, suppose that $G$ is connected and locally finite graph that has the $(\{f_n\}, r)$-containment property, and let $g_0$ be a vertex of $G$. 
For $k\geq 1$, let $T_k=T_G(\{f_n\}, r, k)$ be the minimal integer so that there is an $\{f_n\}$-containment strategy for $X_0=B_G (g_0, k)$ such that  $X_n \subset B_G(g_0, r T_k)$ for every $n$.  The sequence $\{T_k\}$ is called the \emph{time $(\{f_n\}, r)$-containment sequence for $G$}.  The proof of Proposition~\ref{scalingTurns} shows that the sequences $T_G(\{f_n\}, r, k)$ and $T_G(\{\ceil{f_n/r} \}, 1, k)$ have equivalent asymptotic growth. Analogously, the asymptotic growth class of $T_G(\{f_n\}, r, k)$ is independent of chosen vertex $g_0$.

Therefore, under the assumptions of Theorem~\ref{thm:qi}, the time containment sequences $\{T_G(\{f_n\}, 1, k)\}$ and $\{T_H(\{b_n\}, 1, k)\}$ have equivalent asymptotic growth.

Time containment sequences are not necessarily linear. We believe that this time complexity plays a role in addressing problems like Question~\ref{que:mohar} or Question~\ref{que:ggt}. 

\end{remark}

\subsection{Graphs quasi-isometric to trees}

\begin{corollary}\label{cor:tree}
If a graph $H$  contains a subgraph  quasi-isometric to the infinite $\delta$-regular tree with $\delta\geq 3$, then $H$ does not satisfy a polynomial containment property.
\end{corollary}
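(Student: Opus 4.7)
The plan is a three-step contradiction argument combining the main tools of the paper: Proposition~\ref{prop:subgraph} (subgraph heredity), Theorem~\ref{thm:qi} (quasi-isometry invariance), and Proposition~\ref{prop:expANDhom} (exponential expansion rules out polynomial containment). Let $T_\delta$ denote the infinite $\delta$-regular tree and fix a subgraph $G \subseteq H$ quasi-isometric to $T_\delta$. I assume, as throughout this section, that $H$ has bounded degree, so $G$ does as well (since $\deg_G(v) \leq \deg_H(v)$). Suppose toward contradiction that $H$ has the $O(n^d)$-containment property for some $d \geq 0$.

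First, I would push the containment property from $H$ down to $T_\delta$. By Proposition~\ref{prop:subgraph}, $G$ inherits the $O(n^d)$-containment property from $H$. Since $G$ and $T_\delta$ both have bounded degree and are quasi-isometric, Theorem~\ref{thm:qi} then produces a sequence $\{b_n\}$ with asymptotic growth equivalent to $\{cn^d\}$ such that $T_\delta$ has the $\{b_n\}$-containment property; in particular $b_n$ is $O(n^d)$.

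Second, I would show that $T_\delta$ admits no $O(n^d)$-containment property, directly from Proposition~\ref{prop:expANDhom}. Fix a basepoint $g_0 \in T_\delta$ and let $S_n = S_{T_\delta}(n, g_0)$. For $n \geq 1$, each vertex of $S_n$ has exactly $\delta - 1$ neighbors in $S_{n+1}$ (its children), and because $T_\delta$ is a tree the children-sets of distinct vertices of $S_n$ are pairwise disjoint; therefore $|A^*| = (\delta - 1)|A|$ for every $A \subseteq S_n$ with $n \geq 1$, while at $n = 0$ one has $|A^*| = \delta \geq \delta - 1$. Hence the expansion hypothesis of Proposition~\ref{prop:expANDhom} is satisfied with $\lambda = \delta - 1 \geq 2$, and $T_\delta$ fails to have the $O(n^d)$-containment property for every $d$. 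This contradicts the conclusion of the first step.

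The argument is essentially book-keeping once the three ingredients are in place, so there is no real technical obstacle. The one point that merits care is to confirm that the hypotheses of Theorem~\ref{thm:qi} are met for the pair $(G, T_\delta)$ — bounded degree for $G$ being automatic from $G \subseteq H$, and the preservation of the $O(n^d)$ bound under the asymptotic equivalence $\preccurlyeq$ being immediate from the definition. It is worth noting that no control on the specific quasi-isometry constants is needed, since Theorem~\ref{thm:qi} already hides them inside the equivalence relation.
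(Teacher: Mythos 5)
Your proof is correct and follows essentially the same route as the paper's: subgraph heredity (Proposition~\ref{prop:subgraph}), quasi-isometry invariance (Theorem~\ref{thm:qi}), and the observation that a regular tree of degree at least $3$ cannot have a polynomial containment property. The only substantive difference is in the last step: the paper cites Theorem~\ref{thm:hgrowth} (homogeneous growth plus divergence of $\sum f_n/s_n$), while you invoke the closely related Proposition~\ref{prop:expANDhom} via the expansion constant $\lambda = \delta - 1$, which is in fact the more direct of the paper's two criteria for a regular tree.
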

\begin{proof}
Let $G$ be an infinite tree on which every vertex has degree $\delta+1 \geq 3$. Then $G$ has homogeneous growth, and the size $s_n$ of a sphere of radius $n$ is bounded from below by the exponential function $\delta^n$. Therefore, if $\{f_n\}$ is a sequence of type $O(n^d)$ then $\sum \frac{f_n}{s_n}$ converges. By  Theorem~\ref{thmi:homogeneous}, the tree $G$ does not satisfy a polynomial containment property.
By Theorem~\ref{thm:qii} any graph quasi-isometric to $G$ does not satisfy a polynomial containment property.  Since containment properties are subgraph-hereditary, Proposition~\ref{prop:subgraph}, the statement follows.
\end{proof}

\section{Containment property for groups} \label{sec:groups}

\subsection{The \v Svark-Milnor Lemma}

Let $\Gamma$ be a group acting by isometries on a metric space $X$. We will only consider the case that $X$ is the Euclidean plane $\mathbb E^2$ or the hyperbolic plane $\mathbb H^2$.  The action is \emph{proper}   if for each $x\in X$ there is $r>0$ such that the set $\{\gamma \in \Gamma \colon \gamma B_X(x, r) \cap B_X(x, r) \neq \emptyset\}$ is finite. The action is \emph{cocompact} if there exists a compact set $K\subseteq X$ such that $X=\bigcup_{\gamma \in \Gamma} \gamma K$.

Any finitely generated group can be considered as a metric space. Specifically, let $\Gamma$ be a finitely generated group with a finite generating set $\mathcal A\subset \Gamma$. This finite generating set  induces a metric on $\Gamma$ known as the \emph{word metric} where   $\dist_{\mathcal A} (\gamma_1, \gamma_2)$ is defined as the length of the shortest word in the generators $\mathcal A$ representing the element of the group $\gamma_2\gamma_1^{-1}$. The word metrics associated with two different finite generating sets are quasi-isometric.

\begin{proposition}[\v Svark-Milnor Lemma]\cite[Page 140]{BrHa99}
Let $X$ be a geodesic metric space. If $\Gamma$ acts properly and cocompactly  by isometries  on $X$, then $\Gamma$ is finitely generated, and for any choice of basepoint $x_0\in X$, the map $\Gamma \to X$ given by $\gamma \mapsto \gamma x_0$ is a quasi-isometry.
\end{proposition}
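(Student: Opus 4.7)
The plan is to execute the standard \v Svark-Milnor argument: use cocompactness to build a finite candidate generating set and to verify coarse surjectivity, and use a geodesic-subdivision argument to witness both that the candidate set generates $\Gamma$ and that the orbit map is bi-Lipschitz up to additive error.

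First I would fix a basepoint $x_0\in X$ and use cocompactness to choose a compact set $K\subseteq X$ with $\Gamma\cdot K=X$. Enlarging $K$, I may assume that $K=\overline{B_X(x_0,R)}$ for some $R>0$. Then define
\[
\mathcal{A}=\{\gamma\in\Gamma\setminus\{1\} \ :\ d_X(x_0,\gamma x_0)\le 3R\}.
\]
By properness of the action, $\mathcal{A}$ is finite (every element of $\mathcal{A}$ moves the closed ball $\overline{B_X(x_0,2R)}$ to a set intersecting itself, and only finitely many $\gamma$ have this property). The main step is to show simultaneously that $\mathcal{A}$ generates $\Gamma$ and that the orbit map $\Phi\colon \gamma\mapsto \gamma x_0$ is a quasi-isometry onto $X$ with respect to the word metric $d_\mathcal{A}$.

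The central construction is as follows. Given $\gamma\in\Gamma$, let $L=d_X(x_0,\gamma x_0)$ and pick a geodesic $c\colon[0,L]\to X$ from $x_0$ to $\gamma x_0$. Subdivide $[0,L]$ into $n=\lceil L/R\rceil$ pieces of length at most $R$, with subdivision points $p_0=x_0,p_1,\dots,p_{n-1},p_n=\gamma x_0$. For each $p_i$ use $\Gamma\cdot \overline{B_X(x_0,R)}=X$ to choose $\gamma_i\in\Gamma$ with $d_X(p_i,\gamma_i x_0)\le R$, with $\gamma_0=1$ and $\gamma_n=\gamma$. Then
\[
d_X(x_0,\gamma_i^{-1}\gamma_{i+1}x_0)=d_X(\gamma_i x_0,\gamma_{i+1}x_0)\le R+R+R=3R,
\]
so each $a_i:=\gamma_i^{-1}\gamma_{i+1}$ lies in $\mathcal{A}\cup\{1\}$. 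Telescoping gives $\gamma=a_0a_1\cdots a_{n-1}$, a word of length at most $n\le L/R+1$. This simultaneously proves that $\mathcal{A}$ generates $\Gamma$ and yields
\[
d_\mathcal{A}(1,\gamma)\le \tfrac{1}{R}\,d_X(x_0,\gamma x_0)+1.
\]
For the other direction, if $\gamma=b_1\cdots b_k$ is a word of length $k=d_\mathcal{A}(1,\gamma)$ in $\mathcal{A}$, then by the triangle inequality along the orbit $x_0,b_1x_0,b_1b_2x_0,\dots,\gamma x_0$,
\[
d_X(x_0,\gamma x_0)\le 3R\cdot k = 3R\cdot d_\mathcal{A}(1,\gamma),
\]
using that each generator moves $x_0$ by at most $3R$. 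Finally, coarse surjectivity of $\Phi$ is immediate: for every $y\in X$ there exists $\gamma\in\Gamma$ with $d_X(y,\gamma x_0)\le R$, so every point of $X$ lies in the $R$-neighborhood of $\Phi(\Gamma)$.

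Combining the two inequalities (both extended from the pair $(1,\gamma)$ to an arbitrary pair $(\gamma,\gamma')$ by the $\Gamma$-equivariance of $\Phi$ and the left-invariance of $d_\mathcal{A}$) yields constants $\lambda=3R$, $\epsilon=1$ (after rescaling by $R$ as needed) and $c=R$ making $\Phi$ a $(\lambda,\epsilon,c)$-quasi-isometry in the sense of Definition~\ref{def:qi}. The expected main obstacle is the generation/subdivision step: one has to be careful to choose the constant $3R$ (rather than $2R$) so that the intermediate elements $\gamma_i^{-1}\gamma_{i+1}$ really do land in $\mathcal{A}$, and to verify that the finiteness of $\mathcal{A}$ follows from the properness hypothesis as stated (which is slightly weaker than requiring finiteness of stabilizers of balls, but suffices here by a standard compactness argument using that $\overline{B_X(x_0,3R)}$ is covered by finitely many translates of small balls on which properness applies).
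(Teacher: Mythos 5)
The paper does not prove the \v Svarc--Milnor Lemma; it is stated as a citation to Bridson--Haefliger~\cite{BrHa99} and used as a black box, so there is no in-paper argument to compare against. Your proof is the standard one found in~\cite{BrHa99}: geodesic subdivision to show that $\mathcal{A}=\{\gamma\neq 1 : d_X(x_0,\gamma x_0)\le 3R\}$ generates $\Gamma$ and gives the upper bound on $d_{\mathcal A}$, the triangle inequality along an orbit for the lower bound, and cocompactness for coarse surjectivity; it is correct. Two small points worth tightening if you write this out in full. First, the step ``every element of $\mathcal A$ moves $\overline{B_X(x_0,2R)}$ to a set intersecting itself'' uses geodesicity (a midpoint of a geodesic from $x_0$ to $\gamma x_0$ lies in both balls); it is slightly cleaner to work with $\overline{B_X(x_0,3R)}$, which intersects its $\gamma$-translate whenever $d_X(x_0,\gamma x_0)\le 3R$ without any midpoint argument. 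Second, the constants at the end should be assembled more carefully: from $d_{\mathcal A}(1,\gamma)\le \tfrac1R d_X(x_0,\gamma x_0)+1$ and $d_X(x_0,\gamma x_0)\le 3R\,d_{\mathcal A}(1,\gamma)$ one gets a $(\lambda,\epsilon,c)$-quasi-isometry with $\lambda=\max(3R,1/R)$, $\epsilon=R$, $c=R$ rather than $(\lambda,\epsilon)=(3R,1)$, but this affects nothing.
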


\subsection{The containment property for groups}

 Let $\Gamma$ be a finitely generated group, and let $\mathcal A$ be a finite generating set. The Cayley graph $\mathcal C (\Gamma, \mathcal A)$ is the directed graph with vertex set $\Gamma$, and edge set corresponding to pairs $(\gamma, \gamma \alpha)$ where $\gamma\in \Gamma$ and $\alpha \in \mathcal A$. We consider Cayley graphs as undirected graphs by ignoring the orientation of edges. Observe that in this setting, every vertex of $\mathcal C (\Gamma, \mathcal A)$ has degree $2|\mathcal A|$.

For any finitely generated group, the Cayley graphs  associated with two different finite generating sets  are quasi-isometric. This can be seen as a consequence of the \v Svark-Milnor lemma by considering Cayley graphs as geodesic metric spaces (let each edge have length one and consider the induced path-metric) and observing that $\Gamma$ acts by isometries, properly and cocompactly on its Cayley graphs.

This fact  implies that invariants of (bounded degree) graphs which are preserved under quasi-isometry become invariants of finitely generated groups.  In particular, we say that the finitely generated groups $\Gamma$ and $\Delta$ are quasi-isometric if they have quasi-isometric Cayley graphs.  A property of groups is said to be \emph{geometric} if it is preserved under quasi-isometry.  The study of geometric properties of finitely generated groups was started by Gromov~\cite{Gr81}.

A well-studied example of a geometric property is having polynomial growth.  Specifically,  a finitely generated group $\Gamma$ has \emph{polynomial growth of degree $d$} if there is a finite generating set $\mathcal A \subset \Gamma$ such that the Cayley graph $\mathcal C(\Gamma, \mathcal A)$ has polynomial growth of degree $d$ in the sense of Definition~\ref{def:growth}. As examples, the infinite cyclic group $\Z$ has linear growth, $\Z\oplus\Z$ has quadratic growth since the square grid is its Cayley graph with respect to the standard generating set, and the free group $F_k$ of rank $k\geq 2$ has exponential growth as its Cayley graph with respect to a free generating set is the infinite tree where each vertex has degree $2k$.   It is an outstanding deep  result of Gromov  that for finitely generated groups, having polynomial growth is equivalent to contain a nilpotent finite index subgroup. For a brief overview of results in the area, we refer the reader to~\cite[Page 148]{BrHa99}.

By Theorem~\ref{thm:qi}, in the class of bounded degree graphs, the property of having  $n^d$-containment property is preserved by quasi-isometry. This defines a geometric property of groups.

\begin{definition}
Let $\Gamma$ be a finitely generated group.
We say that $\Gamma$ has the \emph{$n^d$-containment property} if for a (and hence any) finitely generating set $\mathcal A$, the  Cayley graph $\mathcal C(\Gamma, \mathcal A)$ has the $n^d$-containment property.
In this case, if $d=0$, we say that $\Gamma$ has the constant containment property.
\end{definition}

The fact that every finitely generated group is a quotient of a free group implies that their growth is at most exponential. By Theorem~\ref{fImpliesContains},  every finitely generated group satisfies the exponential containment property. Finitely generated groups can have better containment properties. For example, the group $\Z\oplus \Z$ has quadratic growth and, by Corollary~\ref{cor:quadratic},  it satisfies the constant containment property.

\subsection{Crystallographic groups and Uniform tilings}\label{sec:tilings}

A \emph{Euclidean wallpaper group},   also known as a \emph{plane  crystallographic group}, is a subgroup $\Gamma$ of the group of isometries of the Euclidean plane  $\mathbb E^2$ that acts properly  and cocompactly on $\mathbb E^2$.  In this case, the  group $\Gamma$ contains two linearly independent translations that generate a finite index subgroup of $\Gamma$ isomorphic to $\Z \oplus \Z$. Up to group isomorphism, there are seventeen wallpaper groups. There are several references in the area, we refer the reader to~\cite[Page 40]{GuSh87} and the references therein.  The \v Svark-Milnor lemma implies that any wallpaper group is quasi-isometric to the Euclidean plane $\mathbb E^2$. In particular, all these groups are quasi-isometric to the square grid and hence they have quadratic growth. Then Corollary~\ref{cor:quadratic} implies that every wallpaper group has constant containment property.

A \emph{uniform tiling of $\mathbb E^2$} is a tessellation of the plane by regular polygons such that for any two vertices there is an isometry of  $\mathbb E^2$ that preserves the tiling and maps one of the vertices to the other.  There are eleven distinct uniform tilings of the Euclidean plane, see for example~\cite[Page 63]{GuSh87}. The subgroup of isometries of the $\mathbb E^2$ that preserves the tiling is a wallpaper group.  Hence   the underlying graph of the tiling is quasi-isometric to a wallpaper group and hence it has the constant containment property.

Analogously, a \emph{crystallographic group on $\mathbb H^2$}  is a subgroup $\Gamma$ of the group of isometries of the hyperbolic plane  whose action is proper  and cocompact.   It is well known that $\Gamma$ contains a subgroup isomorphic to the free group in two generators, for example as a consequence of the Tits alternative~\cite{Ti72}.  Since the free group of rank $2$ does not have polynomial (or even subexponential) containment property, if follows that $\Gamma$  does not have it as well.

Similarly, a \emph{uniform tiling of $\mathbb H^2$} is a tessellation of the hyperbolic plane by regular (hyperbolic) polygonal faces such that for any two vertices there is an automorphism of the tiling mapping one of the vertices to the other.  There are infinitely many  distinct uniform tilings of the hyperbolic plane, see for example~\cite[Page 261]{Co08} and the references therein. The subgroup of isometries of   $\mathbb H^2$ that preserve the tiling is a crystallographic group.  It follows that the underlying graph of the tiling is quasi-isometric to a crystallographic group on $\mathbb H^2$ and hence it does not have polynomial (or even subexponential) containment property.


\begin{thebibliography}{10}
	
	\bibitem{BrHa99}
	Martin~R. Bridson and Andr{\'e} Haefliger.
	\newblock {\em Metric spaces of non-positive curvature}, volume 319 of {\em
		Grundlehren der Mathematischen Wissenschaften [Fundamental Principles of
		Mathematical Sciences]}.
	\newblock Springer-Verlag, Berlin, 1999.
	
	\bibitem{Co08}
	John~H. Conway, Heidi Burgiel, and Chaim Goodman-Strauss.
	\newblock {\em The symmetries of things}.
	\newblock A K Peters, Ltd., Wellesley, MA, 2008.
	
	\bibitem{DeHa07}
	M.~Develin and S.~G. Hartke.
	\newblock Fire containment in grids of dimension three and higher.
	\newblock {\em Discrete Appl. Math.}, 155(17):2257--2268, 2007.
	
	\bibitem{FH13}
	Ohad~N. Feldheim and Rani Hod.
	\newblock 3/2 firefighters are not enough.
	\newblock {\em Discrete Appl. Math.}, 161(1-2):301--306, 2013.
	
	\bibitem{FHS00}
	S.~Finbow, B.~Hartnell, Q.~Li, and K.~Schmeisser.
	\newblock On minimizing the effects of fire or a virus on a network.
	\newblock {\em J. Combin. Math. Combin. Comput.}, 33:311--322, 2000.
	\newblock Papers in honour of Ernest J. Cockayne.
	
	\bibitem{FM09}
	Stephen Finbow and Gary MacGillivray.
	\newblock The firefighter problem: a survey of results, directions and
	questions.
	\newblock {\em Australas. J. Combin.}, 43:57--77, 2009.
	
	\bibitem{Fo03}
	P.~Fogarty.
	\newblock Catching the fire on grids.
	\newblock Master's thesis, University of Vermont, 2003.
	
	\bibitem{GKP14}
	Tom{\'a}{\v{s}} Gaven{\v{c}}iak, Jan Kratochv{\'{\i}}l, and Pawe{\l} Pra{\l}at.
	\newblock Firefighting on square, hexagonal, and triangular grids.
	\newblock {\em Discrete Math.}, 337:142--155, 2014.
	
	\bibitem{Gr81}
	Mikhael Gromov.
	\newblock Groups of polynomial growth and expanding maps.
	\newblock {\em Inst. Hautes \'Etudes Sci. Publ. Math.}, (53):53--73, 1981.
	
	\bibitem{GuSh87}
	Branko Gr{\"u}nbaum and G.~C. Shephard.
	\newblock {\em Tilings and patterns}.
	\newblock W. H. Freeman and Company, New York, 1987.
	
	\bibitem{Me04}
	M.~E. Messinger.
	\newblock Firefighting on infinite grids.
	\newblock Master's thesis, Dalhousie University, 2004.
	
	\bibitem{Me07}
	M.~E. Messinger.
	\newblock Firefighting on the triangular grid.
	\newblock {\em J. Combin. Math. Combin. Comput.}, 63:37--45, 2007.

	\bibitem{MP17}
	Eduardo Mart\'inez-Pedroza.
	\newblock A note on the relation between Hartnell's firefighter problem and growth of groups.   
	\newblock \emph{Actes du {S}\'eminaire de {T}h\'eorie {S}pectrale et {G}\'eometrie.} Univ. Grenoble I, Saint-Martin-d'H\`eres,
	\newblock 2017 (To appear).
	
	\bibitem{NR08}
	K.~L. Ng and P.~Raff.
	\newblock A generalization of the firefighter problem on {$\Bbb Z\times\Bbb
		Z$}.
	\newblock {\em Discrete Appl. Math.}, 156(5):730--745, 2008.
	
	\bibitem{Ti72}
	J.~Tits.
	\newblock Free subgroups in linear groups.
	\newblock {\em J. Algebra}, 20:250--270, 1972.
	
	\bibitem{MW02}
	Ping Wang and Stephanie~A. Moeller.
	\newblock Fire control on graphs.
	\newblock {\em J. Combin. Math. Combin. Comput.}, 41:19--34, 2002.
	
\end{thebibliography}
\end{document}